\numberwithin{equation}{section} \DeclareMathSizes{2}{10}{12}{13}
\newcommand*{\doublerightarrow}[2]{\mathrel{
  \settowidth{\@tempdima}{$\scriptstyle#1$}
  \settowidth{\@tempdimb}{$\scriptstyle#2$}
  \ifdim\@tempdimb>\@tempdima \@tempdima=\@tempdimb\fi
  \mathop{\vcenter{
    \offinterlineskip\ialign{\hbox to\dimexpr\@tempdima+1em{##}\cr
    \rightarrowfill\cr\noalign{\kern.5ex}
    \rightarrowfill\cr}}}\limits^{\!#1}_{\!#2}}}
\newcommand{\leftrarrows}{\mathrel{\raise.75ex\hbox{\oalign{%
  $\scriptstyle\leftarrow$\cr
  \vrule width0pt height.5ex$\hfil\scriptstyle\relbar$\cr}}}}
\newcommand{\lrightarrows}{\mathrel{\raise.75ex\hbox{\oalign{%
  $\scriptstyle\relbar$\hfil\cr
  $\scriptstyle\vrule width0pt height.5ex\smash\rightarrow$\cr}}}}
\newcommand{\Rrelbar}{\mathrel{\raise.75ex\hbox{\oalign{%
  $\scriptstyle\relbar$\cr
  \vrule width0pt height.5ex$\scriptstyle\relbar$}}}}
\def\leftrightarrowsfill@{\arrowfill@\leftrarrows\Rrelbar\lrightarrows}
\newcommand{\xleftrightarrows}[2][]{\ext@arrow 3399\leftrightarrowsfill@{#1}{#2}}
\newtheorem{thm}{Proposition}[section]
\newtheorem{Thm}[thm]{Theorem}
\newtheorem{rem}[thm]{Remark}
\newtheorem{cor}[thm]{Corollary}
\newtheorem{lem}[thm]{Lemma}
\newtheorem{defn}[thm]{Definition}
\title{Heavily separable functors of the second kind and applications
}
\author{Abhishek Banerjee\footnote{Email: abhishekbanerjee1313@gmail.com} $\qquad \qquad$ Subhajit Das\footnote{Email: subhajitdas@iisc.ac.in}}
\date{ }
\begin{document}

\maketitle

\centerline{\small \emph{Department of Mathematics, Indian Institute of Science, Bangalore 560012, India.}}

\smallskip

\begin{abstract} We introduce heavily separable functors of the second kind and study them in three different situations. The first of these is with restrictions and extensions of scalars for modules over small preadditive categories. The second is with free functors taking values in Eilenberg-Moore categories associated to a monad or a comonad. Finally, we consider entwined modules and give if and only if  conditions for heavy separability of the second kind for functors forgetting either the comodule action or the module action.
\end{abstract}

\smallskip

{\emph{{\bf\emph{MSC(2020) Subject Classification:} } }}  16D90, 18A40, 18C20

\smallskip

{ \emph{{\bf \emph{Keywords:}}} separable, monads, comonads, entwined modules

\smallskip

\section{Introduction}

The purpose of this paper is to introduce the notion of heavily separable functors of the second kind and study it in three different contexts. The categorical notion of a separable functor was first given by N\u{a}st\u{a}sescu,  Van den Bergh,  and Van Oystaeyen in \cite{NVV}. A functor $F:\mathcal C\longrightarrow \mathcal D$ is said to be separable  if the natural transformation on morphism spaces induced by $F$ can be split by a natural transformation $P$. This definition is constructed so that separable morphisms of rings correspond to the restriction of scalars being separable in the sense of \cite{NVV}. Therefore, the study of separable functors is closely tied to that of adjoint pairs. In \cite{Raf}, Rafael gave  conditions in terms of the unit and counit of an adjunction $(F,G)$ for the functors $F$ or $G$ to be separable. 

\smallskip
In this paper, we bring together two important refinements of the notion of separable functor that have appeared in the literature. In \cite{Ard1}, Ardizzoni and Menini introduced heavily separable functors, i.e., separable functors $F$ such that the splitting natural transformation $P$ is compatible with compositions in $\mathcal D$ in a certain manner (see 
also  Ardizzoni and Menini \cite{Ard0}, \cite{Ard2}). 
On the other hand,  suppose that we have functors $F:\mathcal C\longrightarrow \mathcal D$ and $I:\mathcal C\longrightarrow \mathcal X$. Then, the functor $F$ is said to be $I$-separable if  the natural transformation on morphism spaces induced by $F$ is split up to the natural transformation induced by $I$. This is known as separability of the second kind, which was introduced by 
Caenepeel and Militaru \cite{CM}. We combine these ideas to consider functors $F:\mathcal C\longrightarrow \mathcal D$ which are heavily $I$-separable, where 
$I$ is a functor $I:\mathcal C\longrightarrow \mathcal X$ (see Definition \ref{D2.1}). 

\smallskip
Let $(F:\mathcal C\longrightarrow \mathcal D, G:\mathcal D\longrightarrow \mathcal C)$ be an adjoint pair. Let $\eta: 1_{\mathcal C}\longrightarrow GF$ be the unit and
let   $\varepsilon: FG\longrightarrow 1_{\mathcal D}$ be the counit of this adjunction. We obtain a Rafael type theorem which shows that for any $I:\mathcal C\longrightarrow 
\mathcal X$, the left adjoint $F$ is heavily $I$-separable if and only if 
    there is a natural transformation $\gamma : IGF \longrightarrow I$ such that 
    \begin{equation} \gamma \circ I\eta = id \qquad 
\gamma \circ (\gamma GF) = \gamma \circ (IG\varepsilon F)
\end{equation} Similarly, for any $J:\mathcal D\longrightarrow \mathcal Y$, the right adjoint $G$  is heavily $J$-separable if and only if there is a natural transformation $\delta : J \longrightarrow JFG$ such that  
\begin{equation}J\varepsilon \circ \delta = id \qquad (\delta FG) \circ \delta = (JF\eta G) \circ \delta\end{equation}
We first apply our results to modules over preadditive categories. Following the philosophy of Mitchell \cite{Mit}, a small preadditive category $\mathscr R$ behaves like a ring with several objects. The category $\mathcal M_{\mathscr R}$ of right $\mathscr R$-modules consists of additive functors $\mathscr R^{op}\longrightarrow \mathbf{Ab}$ taking values in the category of abelian groups. If $\phi:\mathscr R\longrightarrow \mathscr S$ is an additive functor, there is an adjoint pair $(\phi^*,\phi_*)$ where $\phi^*:\mathcal M_{\mathscr R}\longrightarrow 
\mathcal M_{\mathscr S}$ is the extension of scalars, and $\phi_*:\mathcal M_{\mathscr S}\longrightarrow \mathcal M_{\mathscr R}$ is the restriction of scalars. We then consider morphisms of small preadditive categories 
 \begin{equation}\label{1/3r}\mathscr{Q} \xrightarrow{\psi} \mathscr{R} \xrightarrow{\phi} \mathscr{S} \xleftarrow{\xi} \mathscr{T}
\end{equation} and give if and only if conditions for $\phi^*$ to be heavily $\psi_*$-separable, as well as if and only if conditions for $\phi_*$ to be heavily $\xi_*$-separable. These extend the fact that for a morphism $\varphi:R\longrightarrow S$ of rings, the restriction of scalars $\varphi_*$ is separable if and only if $S$ is separable as an $R$-algebra in the classical sense 
(see \cite[Proposition 1.3]{NVV}). Further, the extension of scalars $\varphi^*$ is separable if and only if $\varphi$ splits as a morphism of $R$-bimodules (see \cite[Proposition 1.3]{NVV}). For the case of ordinary rings, separability of the second kind has been studied by Caenepeel and Militaru \cite[$\S$ 4]{CM} for the functors between module categories arising from \eqref{1/3r}. Our results also extend the study of heavily separable ring homomorphisms and heavy separability idempotents from Ardizzoni and Menini \cite{Ard1}. 

\smallskip
The next context we consider is that of monads and comonads. We know that if $(F:\mathcal C\longrightarrow \mathcal D, G:\mathcal D\longrightarrow \mathcal C)$  is an adjoint pair of functors, the composition $GF$ is canonically equipped with the structure of a monad $\mathbf T$ on $\mathcal C$. Dually, $FG$ can be equipped with the structure of a comonad 
$\mathbf S$ on $\mathcal D$. In fact, one can turn around this point of view. By fixing a category $\mathcal C$ and a monad $\mathbf T$ on $\mathcal C$, one can look at the family of $\mathbf T$-adjunctions, i.e.,
adjoint pairs $(F:\mathcal C\longrightarrow \mathcal D, G:\mathcal D\longrightarrow \mathcal C)$ whose associated monad is $\mathbf T$. If $I:\mathcal C\longrightarrow \mathcal X$ is any functor and $(F,G)$, $(F',G')$ are $\mathbf T$-adjunctions, we show that the left adjoint $F$ is heavily $I$-separable if and only if so is $F'$. This means that for a given monad $\mathbf T$, we can ask if the family of $\mathbf T$-adjunctions as a whole, is heavily $I$-separable. A dual result holds for comonads. These results are motivated by the work of Mesablishvili \cite{Mes} with $I$-separability and families of adjunctions associated to a given monad or comonad.  For more on separability conditions and how they relate to monads and comonads, we also refer the reader to Chen \cite{Chen}. 

\smallskip
Now suppose that $(L,R)$ is an adjunction such that the left adjoint $L$ can be equipped with the structure of a comonad $\mathbf L$ on $\mathcal C$. Then, we know (see \cite[$\S$ 2.6]{BBW}) that the right adjoint $R$ can be equipped with the structure of a monad $\mathbf R$. We show that the functor $F^{\mathbf L}$ taking objects of $\mathcal C$ to free $\mathbf L$-coalgebras is heavily separable if and only if so is the functor $F_{\mathbf R}$ taking objects of $\mathcal C$ to free $\mathbf R$-algebras. Combining with the results of 
Ardizzoni and Menini \cite{Ard1}, this has two applications. First, we show that for any coring $C$, the free $C$-contramodule functor is heavily separable if and only if $C$ has an invariant grouplike element. Secondly, a ring homomorphism $R\longrightarrow S$ is heavily separable if and only if the free contramodule functor for the Sweedler coring $S\otimes_RS$ is heavily separable. 

\smallskip
The final context that we study in this paper is that of entwined modules. This is one of the original contexts studied by Caenepeel and Militaru while introducing separability of the second kind in \cite{CM}. We recall that an entwining structure consists of an algebra $A$ and a coalgebra $C$ bound together by a morphism $\psi:C\otimes A\longrightarrow A\otimes C$ so that the datum 
$(A,C,\psi)$ behaves like a bialgebra. Entwining structures were introduced by Brzezi\'{n}ski and Majid \cite{Brz0} in order to study noncommutative principal bundles. The category $\mathcal M_A^C(\psi)$ of entwined modules, i.e., modules over the datum $(A,C,\psi)$ is a generalization of several concepts in the literature such as relative Hopf modules, Doi-Hopf modules and Yetter-Drinfeld modules (see, for intance, \cite{Brz1}, \cite{Brz2}, \cite{Brz3}, \cite{Can1}, \cite{Can2}, \cite{Cae3}, \cite{Cae4}, \cite{CanDe}, \cite{Can}, \cite{MT}). The structure of the category 
$\mathcal M_A^C(\psi)$ can be studied by means of two adjoint pairs $(U^C,F^C)$ and
$(F_A,U_A)$  
\begin{equation}\label{entrwad}
\mathcal M_A\xleftrightarrows[\text{$\qquad F^C\qquad$}]{\text{$\qquad U^C\qquad$}}\mathcal M(\psi)^{C}_A \qquad \mathcal M(\psi)^{C}_A\xleftrightarrows[\text{$\qquad U_A\qquad$}]{\text{$\qquad F_A\qquad$}}\mathcal M^C
\end{equation} where $U^C$ is the functor that forgets the $C$-coaction and $U_A$ is the functor that forgets the $A$-action. The separability of the functors in \eqref{entrwad} is a topic that has been widely studied in the literature (see, for instance, \cite{BBR}, \cite{Art}, \cite{uni}, \cite{Can1}, \cite{Can2}, \cite{Cae5}, \cite{MT}). In this paper, we develop if and only if conditions for the functor $U^C$ to be heavily $U_A$-separable, as well as for the functor $U_A$ to be heavily $U^C$-separable. The former case is related to morphisms $\theta:C\otimes C\longrightarrow A$ satisfying certain conditions, while the latter is related to certain kinds of morphisms $\zeta:C\longrightarrow A\otimes A$. 

\section{Heavily $I$-separable functors and a Rafael type theorem}

Throughout this paper, if $\mathcal C$, $\mathcal C'$ are categories, we denote by $[\mathcal C,\mathcal C']$ the category of functors from 
$\mathcal C$ to $\mathcal C'$. We begin by introducing the following definition.

\begin{defn}\label{D2.1}
Let $F : \mathcal{C} \longrightarrow \mathcal{D}$ and  $I:\mathcal C\longrightarrow \mathcal X$ be functors. We say that  $F:\mathcal C\longrightarrow \mathcal D$ is heavily $I$-separable if there exists a natural transormation $P:\mathcal D(F(\_\_),F(\_\_))\longrightarrow \mathcal X(I(\_\_),I(\_\_))$ in $[\mathcal C^{op}\times \mathcal C,Set]$  that satisfies the following  conditions :

\smallskip
(a) The following diagram commutes for any objects $a$, $b$ $\in \mathcal C$
\begin{equation}\label{e2.1}
\begin{tikzcd}[row sep=3em, column sep=2.5em]
   \mathcal{C}(a,b) \arrow{r}{F_{a,b}} \arrow[swap]{d}{I_{a,b}} & \mathcal{D}(F(a)_, F(b)) \arrow{ld}{P_{a,b}}\\
    \mathcal{X}(I(a) _, I(b))
   \end{tikzcd}
   \end{equation} 

\smallskip
(b)  $P$ is compatible with compositions, i.e., for any $a$, $b$, $c\in 
\mathcal C$, we have 

\begin{equation}\label{pre2.2}
   \begin{tikzcd}[row sep=huge, column sep=huge]
    \mathcal{D}(Fa, Fb) \times \mathcal{D}(Fb, Fc) \arrow{r}{P_{a, b} \times P_{b, c}} \arrow[swap]{d}{\circ} & \mathcal{X}(Ia, Ib) \times \mathcal{X}(Ib, Ic) \arrow{d}{\circ}\\
    \mathcal{D}(Fa, Fc) \arrow{r}{P_{a, c}} & \mathcal{X}(Ia, Ic)
   \end{tikzcd}
\end{equation}
\end{defn}

If $I = 1_{\mathcal{C}}$, then Definition \ref{D2.1} reduces to the notion of heavy separability due to Ardizzoni and Menini \cite{Ard1}.  In general, given composable functors $\mathcal C\overset{F}{\longrightarrow}\mathcal D\overset{G}{\longrightarrow}\mathcal E$, it may be easily verified that: (1) if $F$ is heavily $I$-separable and $G$ is heavily separable, then $GF$ is heavily $I$-separable, and (2) if $GF$ is heavily $I$-separable, then so is $F$.

\begin{lem}\label{L2.2f}
A heavily separable functor $G : \mathcal{C} \longrightarrow \mathcal{D}$ is heavily $I$-separable for any $I : \mathcal{C} \longrightarrow \mathcal{X}$. Additionally, if $I$ is fully faithful,  any heavily $I$-separable functor  $G : \mathcal{C} \longrightarrow \mathcal{D}$ is also heavily separable. 
\end{lem}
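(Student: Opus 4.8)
The plan is to view heavy separability as the case $I=1_{\mathcal C}$ of Definition \ref{D2.1}, and to pass between the two splittings by composing with the action of $I$ on hom-sets. Write the heavy separability datum for $G$ as a natural transformation $Q:\mathcal D(G(\_\_),G(\_\_))\longrightarrow \mathcal C(\_\_,\_\_)$ in $[\mathcal C^{op}\times \mathcal C,Set]$ satisfying $Q_{a,b}\circ G_{a,b}=\mathrm{id}_{\mathcal C(a,b)}$ together with the composition compatibility $Q_{a,c}(g\circ f)=Q_{b,c}(g)\circ Q_{a,b}(f)$ for all composable $f\in\mathcal D(Ga,Gb)$ and $g\in\mathcal D(Gb,Gc)$.

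For the first assertion I would simply set $P:=I\circ Q$, that is, $P_{a,b}:=I_{a,b}\circ Q_{a,b}$, where $I_{a,b}:\mathcal C(a,b)\longrightarrow\mathcal X(Ia,Ib)$ is the hom-set map of the functor $I$. Since $Q$ is natural and the family $\{I_{a,b}\}$ is itself a natural transformation $\mathcal C(\_\_,\_\_)\longrightarrow\mathcal X(I(\_\_),I(\_\_))$ (this being exactly the functoriality of $I$), their composite $P$ is natural in $[\mathcal C^{op}\times\mathcal C,Set]$. Condition (a) is immediate, $P_{a,b}\circ G_{a,b}=I_{a,b}\circ(Q_{a,b}\circ G_{a,b})=I_{a,b}$, and condition (b) follows from the compatibility of $Q$ with composition and the functoriality of $I$:
\begin{equation*}
P_{a,c}(g\circ f)=I_{a,c}\bigl(Q_{b,c}(g)\circ Q_{a,b}(f)\bigr)=I_{b,c}(Q_{b,c}(g))\circ I_{a,b}(Q_{a,b}(f))=P_{b,c}(g)\circ P_{a,b}(f),
\end{equation*}
which is exactly \eqref{pre2.2}. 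Hence $G$ is heavily $I$-separable.

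For the converse, suppose $I$ is fully faithful and $G$ is heavily $I$-separable with splitting $P:\mathcal D(G(\_\_),G(\_\_))\longrightarrow\mathcal X(I(\_\_),I(\_\_))$. Full faithfulness says each $I_{a,b}$ is a bijection, so the family $\{I_{a,b}^{-1}\}$, being the inverse of a natural isomorphism of bifunctors, is again natural. I would then define $Q:=I^{-1}\circ P$, i.e. $Q_{a,b}:=I_{a,b}^{-1}\circ P_{a,b}$, which is natural and satisfies $Q_{a,b}\circ G_{a,b}=I_{a,b}^{-1}\circ(P_{a,b}\circ G_{a,b})=I_{a,b}^{-1}\circ I_{a,b}=\mathrm{id}$, giving condition (a) with $I=1_{\mathcal C}$. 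For (b), applying the injective map $I_{a,c}$ to both sides of the desired identity $Q_{a,c}(g\circ f)=Q_{b,c}(g)\circ Q_{a,b}(f)$ and using functoriality of $I$ reduces it to the compatibility \eqref{pre2.2} of $P$, so the identity holds and $G$ is heavily separable.

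The verifications are essentially bookkeeping, so there is no deep obstacle; the one point needing care is that transporting the composition-compatibility (b) across $I$ in the first direction, and across $I^{-1}$ in the second, relies precisely on $I$ respecting the composition of hom-sets, while the converse additionally uses the bijectivity of each $I_{a,b}$ both to define $I_{a,b}^{-1}$ and to cancel it against $I_{a,c}$.
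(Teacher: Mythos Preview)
Your proof is correct and, once unpacked, coincides with the paper's argument: the paper observes that $1_{\mathcal C}$ is heavily $I$-separable and that heavy $I$-separability survives post-composition with a heavily separable functor, which amounts exactly to your construction $P=I\circ Q$. For the converse the paper simply cites the natural isomorphism $\mathcal X(I(\_\_),I(\_\_))\cong\mathcal C(\_\_,\_\_)$ coming from full faithfulness, which is your $I^{-1}$.
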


\begin{proof}
Since the identity functor $1_{\mathcal C}$ is heavily $I$-separable, it follows from the observations above that any heavily separable functor is also heavily $I$-separable.   The last statement follows from the fact that when $I$ is fully faithful, we have $ \mathcal X(I(\_\_),I(\_\_))\cong \mathcal C(\_\_,\_\_)$.
\end{proof}

\smallskip
Our main result in this section is a Rafael type theorem for heavily $I$-separable functors. 
\begin{Thm} \label{RTT}  Let $(F:\mathcal C\longrightarrow \mathcal D, G:\mathcal D\longrightarrow \mathcal C)$ be an adjoint pair, having unit $\eta: 1_{\mathcal C}\longrightarrow GF$ and
counit $\varepsilon: FG\longrightarrow 1_{\mathcal D}$. 
 Let $I : \mathcal{C} \longrightarrow \mathcal{X}$ and $J : \mathcal{D} \longrightarrow \mathcal{Y}$ be functors. Then, we have
 
 \smallskip
(1)  The left adjoint $F$ is heavily $I$-separable if and only if there is a natural transformation $\gamma : IGF \longrightarrow I$ such that $\gamma \circ I\eta = id$ and
\begin{equation}\gamma \circ (\gamma GF) = \gamma \circ (IG\varepsilon F)\end{equation}
(2) The right adjoint $G$ is heavily $J$-separable if and only if there is a natural transformation $\delta : J \longrightarrow JFG$ such that $J\varepsilon \circ \delta = id$ and
\begin{equation}(\delta FG) \circ \delta = (JF\eta G) \circ \delta\end{equation}
 
\end{Thm}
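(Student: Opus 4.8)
The plan is to reproduce the structure of Rafael's original argument: set up an explicit bijection between the splittings $P$ witnessing heavy $I$-separability of $F$ and the natural transformations $\gamma\colon IGF\to I$, and then match each of the two conditions of Definition \ref{D2.1} against one of the two displayed equations. The two assignments I would use are $\gamma_b := P_{GFb,\,b}(\varepsilon_{Fb})\colon IGF(b)\to I(b)$ in one direction, and $P_{a,b}(f) := \gamma_b\circ I(Gf\circ\eta_a)$ for $f\colon Fa\to Fb$ in the other. Since part (2) is the formal dual of part (1) — obtained by passing to opposite categories, which reverses arrows, interchanges $F$ with $G$ and $\eta$ with $\varepsilon$ — I would prove (1) in full and then indicate that (2) follows by this dualization, with $\delta_d = Q_{d,FGd}(\eta_{Gd})$ and $Q_{d,d'}(h) = J(\varepsilon_{d'}\circ Fh)\circ\delta_d$ playing the roles of $\gamma$ and $P$.

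First I would check that these two assignments are mutually inverse and that $\gamma$ is natural exactly when $P$ is; this is a routine chase using naturality of $P$ in both variables, naturality of $\eta$ and $\varepsilon$, and the triangle identities. For instance, $P\mapsto\gamma\mapsto P$ returns the original $P$ because $\varepsilon_{Fb}\circ F(Gf\circ\eta_a)=f$ by naturality of $\varepsilon$ followed by a triangle identity, while $\gamma\mapsto P\mapsto\gamma$ returns $\gamma$ via $G\varepsilon_{Fb}\circ\eta_{GFb}=\mathrm{id}$. Next I would verify that condition (a) corresponds precisely to $\gamma\circ I\eta=\mathrm{id}$: substituting $f=Fg$ and using naturality of $\eta$ gives $P_{a,b}(Fg)=(\gamma_b\circ I\eta_b)\circ Ig$, which equals $Ig$ for all $g$ iff $\gamma_b\circ I\eta_b=\mathrm{id}$; conversely the triangle identity $\varepsilon_{Fb}\circ F\eta_b=\mathrm{id}$ together with (a) forces $\gamma_b\circ I\eta_b=\mathrm{id}$.

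The heart of the argument, and the step I expect to be the main obstacle, is matching condition (b) — compatibility of $P$ with composition — to the second equation $\gamma\circ(\gamma GF)=\gamma\circ(IG\varepsilon F)$. For the ``if'' direction (building $P$ from $\gamma$), I would expand $P_{b,c}(g)\circ P_{a,b}(f)$, use naturality of $\gamma$ to move $\gamma_b$ leftward past the image under $I$ of $Gg\circ\eta_b$ (which raises its index to $\gamma_{GFc}$), then invoke the second equation to turn $\gamma_c\circ\gamma_{GFc}$ into $\gamma_c\circ I(G\varepsilon_{Fc})$, after which naturality of $\varepsilon$ and a triangle identity collapse the remaining string inside $I(-)$ to $Gg\circ Gf\circ\eta_a$, identifying the result with $P_{a,c}(g\circ f)$. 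For the ``only if'' direction (building $\gamma$ from $P$), the key observation is to feed the composable pair $\varepsilon_{FGFb}\colon FGFGFb\to FGFb$ and $\varepsilon_{Fb}\colon FGFb\to Fb$ into (b), which yields $\gamma_b\circ\gamma_{GFb}=P_{GFGFb,\,b}(\varepsilon_{Fb}\circ\varepsilon_{FGFb})$; expanding the right-hand side through the reconstruction formula and simplifying $G(\varepsilon_{Fb}\circ\varepsilon_{FGFb})\circ\eta_{GFGFb}=G\varepsilon_{Fb}$ (again a triangle identity, applied to $G\varepsilon_{FGFb}\circ\eta_{GFGFb}=\mathrm{id}$) identifies it with $\gamma_b\circ I(G\varepsilon_{Fb})$, which is exactly the second equation. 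The delicate points throughout are bookkeeping the many instances of $\eta$ and $\varepsilon$ at iterated objects such as $FGFb$ and choosing the correct composable pair to substitute into (b); once the right pair is identified, every simplification reduces to a single triangle identity or naturality square.
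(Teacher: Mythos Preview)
Your proposal is correct and follows essentially the same approach as the paper: the same formulas $\gamma_b = P_{GFb,b}(\varepsilon_{Fb})$ and $P_{a,b}(f) = \gamma_b\circ I(Gf\circ\eta_a)$ are used, the same composable pair $(\varepsilon_{FGFb},\varepsilon_{Fb})$ is fed into condition (b) for the ``only if'' direction, and the same triangle identities drive the simplifications. The only cosmetic differences are that you isolate the mutual-inverse check as a separate preliminary step (the paper derives the reconstruction formula as equation~\eqref{pev4} mid-proof) and you run some computations in the opposite direction.
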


\begin{proof}
We only prove (1). Suppose first that there is a natural transformation $\gamma : IGF \longrightarrow I$ such that $\gamma \circ I\eta = id$ and $\gamma \circ (\gamma GF) = \gamma \circ (IG\varepsilon F)$. For any $(a, b) \in \mathcal{C}^{op} \times \mathcal{C}$, we set
\begin{equation}
    P_{a, b} : \mathcal{D}(Fa, Fb) \longrightarrow \mathcal{X}(Ia, Ib) \qquad h \mapsto \gamma_b \circ I(G(h) \circ \eta_a)
\end{equation}
It follows from the naturality of $\gamma$ and $\eta$ that $P : \mathcal{D}(F(\_\_) _, F(\_\_)) \longrightarrow \mathcal{X}(I(\_\_) _, I(\_\_))$ is a natural transformation. Further, for any $f \in \mathcal C(a,b)$, using the naturality of $\gamma$ and the fact that $\gamma \circ I\eta = id$, we have
\begin{equation}\label{pe2.1}
  \begin{split}
    &P_{a, b} \circ F_{a, b} (f) = P_{a, b}(F(f)) = \gamma_b \circ I(GF(f) \circ \eta_a) = \gamma_b \circ I(GF(f)) \circ I(\eta_a)\\
                                         &= I(f) \circ \gamma_a \circ I(\eta_a) = I_{a,b}(f)
  \end{split}
\end{equation}
Now, let $f \in \mathcal D(Fa,Fb)$ and $g\in \mathcal D(Fb, Fc)$. Then, we have
\begin{equation}\label{e2.y}
  \begin{split}
    P_{a, c}(g \circ f) &= \gamma_c \circ IG(g \circ f) \circ I(\eta_a)\\
                              &= \gamma_c \circ IG(g) \circ IG(\varepsilon_{Fb} \circ F(\eta_b)) \circ IG(f) \circ I(\eta_a)\qquad\left[\text{using } \varepsilon F \circ F\eta = 1_F\right]\\
                              &= \gamma_c \circ IG(g \circ \varepsilon_{Fb}) \circ IGF(\eta_b) \circ IG(f) \circ I(\eta_a)\\
                              &= \gamma_c \circ IG(\varepsilon_{Fc} \circ FG(g)) \circ IGF(\eta_b) \circ IG(f) \circ I(\eta_a)\qquad\left[\text{using naturality of }\varepsilon\right]\\
                              &= \gamma_c \circ \gamma_{GFc} \circ IGFG(g) \circ IGF(\eta_b) \circ IG(f) \circ I(\eta_a)\qquad\left[\text{using }\gamma \circ (\gamma GF) = \gamma \circ (IG\varepsilon F)\right]
  \end{split}
\end{equation}
Applying the naturality of $\gamma$ to the morphism $b \xrightarrow{G(g) \circ \eta_b} GFc$ in $\mathcal{C}$ gives the commutative square
\begin{equation}\label{ecd}
  \begin{tikzcd}[row sep=huge, column sep=huge]
    IGFb \arrow{r}{\gamma_b} \arrow[swap]{d}{IGFG(g) \circ IGF(\eta_b)} & Ib \arrow{d}{IG(g) \circ I(\eta_b)}\\
    IGFGFc \arrow{r}{\gamma_{GFc}} & IGFc
  \end{tikzcd}
\end{equation}
Using \eqref{ecd}, the expression in \eqref{e2.y} now becomes
\begin{equation}\label{ecde} P_{a, c}(g \circ f)=\gamma_c \circ IG(g) \circ I(\eta_b) \circ \gamma_b \circ IG(f) \circ I(\eta_a) = P_{b, c}(g) \circ P_{a, b}(f)
\end{equation}
From \eqref{pe2.1} and \eqref{ecde}, it follows that $F$ is heavily $I$-separable.

\smallskip
Conversely, let $F$ be heavily $I$-separable. Then, we have a natural transformation $P : \mathcal{D}(F(\_\_) _, F(\_\_)) \longrightarrow \mathcal{X}(I(\_\_) _, I(\_\_))$ satisfying the conditions in Definition \ref{D2.1}. 
We define  $\gamma : IGF \longrightarrow I$ by setting
\begin{equation} \gamma_a = P_{GFa, a}(\varepsilon_{Fa}) : IGFa \longrightarrow Ia
\end{equation} for each $a\in \mathcal{C}$. The naturality of $\gamma$ follows from that of $P$ and $\varepsilon$. 
Applying the naturality of $P$  to the morphism $(GFa, a) \xrightarrow{((\eta_a)^{op}, 1_a)} (a, a)$ in $\mathcal{C}^{op} \times \mathcal{C}$, we have the commutative diagram:
\begin{equation}\begin{tikzcd}[row sep=huge, column sep=huge]
  \mathcal{D}(FGFa, Fa) \arrow{r}{P_{GFa, a}} \arrow[swap]{d}{\mathcal{D}(F(\eta_a)^{op}, 1_{Fa})} & \mathcal{X}(IGFa, Ia) \arrow{d}{\mathcal{X}(I(\eta_a)^{op}, 1_{Ia})}\\
  \mathcal{D}(Fa, Fa) \arrow{r}{P_{a, a}} & \mathcal{X}(Ia, Ia)
\end{tikzcd}\end{equation}
and the  identity $\varepsilon F \circ F\eta = 1_F$, we have
\begin{equation}
    \gamma_a \circ (I\eta)_a = P_{GFa, a}(\varepsilon_{Fa}) \circ I(\eta_a) = P_{a, a}(\varepsilon_{Fa} \circ F(\eta_a)) = P_{a, a}(1_{Fa}) = P_{a, a}(F_{a, a}(1_a)) = I_{a, a}(1_a) = 1_{Ia}
\end{equation}
This shows that $\gamma \circ I\eta = id$. 
Now let $x, y\in \mathcal{C}$ and $f\in \mathcal D(Fx, Fy)$.
Applying the naturality of $P$ to the morphism $(GFy, y) \xrightarrow{((G(f) \circ \eta_x)^{op}, 1_y)} (x, y) $ in $\mathcal{C}^{op} \times \mathcal{C}$,  we obtain the commutative diagram
\begin{equation}\label{cd2.6}\begin{tikzcd}[row sep=large, column sep=large]
     \mathcal{D}(FGFy, Fy) \arrow{r}{P_{GFy, y}} \arrow[swap]{d}{\mathcal{D}((FG(f) \circ F(\eta_x))^{op}, 1_{Fy})} & \mathcal{X}(IGFy, Iy) \arrow{d}{\mathcal{X}((IG(f) \circ I(\eta_x))^{op}, 1_{Iy})}\\
     \mathcal{D}(Fx, Fy) \arrow{r}{P_{x, y}} & \mathcal{X}(Ix, Iy)
   \end{tikzcd}\end{equation}
Using \eqref{cd2.6}, the naturality of $\varepsilon$ and that $\varepsilon F \circ F\eta = id$, we get
\begin{equation}\label{pev4}
\begin{array}{ll}
     \gamma_y \circ IG(f) \circ (I\eta)_x &= P_{GFy, y}(\varepsilon_{Fy}) \circ IG(f) \circ I(\eta_x)\\
     & = P_{x, y}(\varepsilon_{Fy} \circ FG(f) \circ F(\eta_x)) = P_{x, y}(f \circ \varepsilon_{Fx} \circ F(\eta_x)) = P_{x, y}(f)\\
     \end{array}
\end{equation}
Finally, for any $a \in \mathcal{C}$, we have
\begin{equation}
  \begin{split}
    (\gamma \circ IG\varepsilon F)_a &= \gamma_a \circ (IG\varepsilon F)_a = \gamma_a \circ IG(\varepsilon_{Fa})\\
                                                   &= \gamma_a \circ IG(\varepsilon_{Fa}) \circ I(G(\varepsilon_{FGFa}) \circ \eta_{GFGFa})\qquad\quad\left[\text{using }G\varepsilon \circ \eta G = id\right]\\
                                                   &= \gamma_a \circ IG(\varepsilon_{Fa} \circ \varepsilon_{FGFa}) \circ I(\eta_{GFGFa})\\
                                                   &= P_{GFGFa, a}(\varepsilon_{Fa} \circ \varepsilon_{FGFa})\qquad\qquad\qquad\qquad\qquad\left[\text{using \eqref{pev4}}\right]\\
                                                   &= P_{GFa, a}(\varepsilon_{Fa}) \circ P_{GFGFa, GFa}(\varepsilon_{FGFa})\quad\qquad\qquad\left[\text{using condition \eqref{pre2.2}}\right]\\
                                                   &= \gamma_a \circ \gamma_{GFa} = (\gamma \circ \gamma GF)_a
  \end{split}
\end{equation}
so that $\gamma \circ (\gamma GF) = \gamma \circ (IG\varepsilon F)$. This proves the result.
\end{proof}
\vspace{18px}

\section{Functors between module categories}
 For a small preadditive category $\mathscr{R}$, let $\mathcal{M}_{\mathscr{R}}$ (respectively, ${}_{\mathscr{R}}\mathcal{M}$) denote the category of right (respectively, left) $\mathscr{R}$-modules, which consists of   additive functors $\mathscr{R}^{op} \longrightarrow \mathbf{Ab}$ (respectively, $\mathscr{R} \longrightarrow \mathbf{Ab}$) (see Mitchell \cite{Mit}). In particular, for every object $a \in \mathscr{R}$, $H_a := \mathscr R(\_\_, a)$ and $_{a}H := \mathscr {R}(a, \_\_)$ are right and left $\mathscr{R}$-modules respectively. Given a right $\mathscr{R}$-module $\mathscr M$ and a left $\mathscr{R}$-module $\mathscr N$, the tensor product $\mathscr M \otimes_\mathscr{R} N$ is defined (see for instance, \cite[$\S$ 2.2]{EV}) by the coend
\begin{equation}\label{coend}
    \mathscr M \otimes_\mathscr{R} \mathscr N := \int^{a \in \mathscr{R}} \mathscr{M}(a) \otimes_{\mathbb{Z}} \mathscr{N}(a) = \bigoplus_{a \in Ob(\mathscr{R})}\left( \mathscr{M}(a) \otimes_{\mathbb{Z}} \mathscr{N}(a)\right)/T
\end{equation}
where $T$ is the subgroup generated by elements of the form $\mathscr{M}(r)(x) \otimes y - x \otimes \mathscr{N}(r)(y), r \in \mathscr{R}(a,b), x \in \mathscr{M}(b), y \in \mathscr{N}(a)$. 

\smallskip Given two small preadditive categories $\mathscr{R}$ and $\mathscr{S}$, their tensor product $\mathscr{R} \otimes \mathscr{S}$ is the preadditive category with $Ob(\mathscr{R} \otimes \mathscr{S}) = Ob(\mathscr{R}) \times Ob(\mathscr{S})$ and ${\mathscr{R} \otimes \mathscr{S}}((a, b), (a', b')) = {\mathscr{R}}(a, a') \otimes_{\mathbb{Z}} {\mathscr{S}}(b, b')$ for any   $(a, b), (a', b') \in Ob(\mathscr{R}) \times Ob(\mathscr{S})$. Further, an $(\mathscr{R}, \mathscr{S})$-bimodule is simply a left $\mathscr{S}^{op} \otimes \mathscr{R}$-module. 
 For a functor $\phi : \mathscr{R} \longrightarrow \mathscr{S}$ between small preadditive categories, the restriction of scalars $\phi_{*} : \mathcal{M}_{\mathscr{S}} \longrightarrow \mathcal{M}_{\mathscr{R}}$ has a left  adjoint \begin{equation} \phi^{*} : \mathcal{M}_{\mathscr{R}} \longrightarrow \mathcal{M}_{\mathscr{S}}\qquad  \mathscr M \mapsto \mathscr M \otimes_{\mathscr{R}} \mathscr{S}(\_\_, \phi(\_\_))
 \end{equation} given by the extension of scalars.  The unit $\eta$ and counit $\varepsilon$ of this adjunction are given by 
 \begin{equation} \begin{array}{c}\eta = \left(\eta_{\mathscr M} = \left(\left(\eta_{\mathscr{M}}\right)_a : \mathscr{M}(a) \longrightarrow \mathscr{M} \otimes_\mathscr{R} \mathscr{S}(\phi(a), \phi(\_\_)), m \mapsto m \otimes 1_{\phi(a)}\right)_{a \in Ob(\mathscr{R})}\right)_{\mathscr M \in \mathcal{M}_{\mathscr{R}}} \\
 \varepsilon = \left(\varepsilon_{\mathscr N} = \left(\left(\varepsilon_{\mathscr N}\right)_b : (\mathscr{N} \circ \phi) \otimes_{\mathscr{R}} {\mathscr{S}}(b, \phi(\_\_)) \longrightarrow \mathscr{N}(b), n \otimes f \mapsto N(f)(n)\right)_{b \in \mathscr{S}}\right)_{\mathscr N \in \mathcal{M}_{\mathscr{S}}}\\
 \end{array}
 \end{equation}

\smallskip
We now fix morphisms of small preadditive categories 
\begin{equation}\mathscr{Q} \xrightarrow{\psi} \mathscr{R} \xrightarrow{\phi} \mathscr{S} \xleftarrow{\xi} \mathscr{T}
\end{equation} In this section, we will study conditions for the functors $\phi^{*}$ and $\phi_{*}$ to be heavily separable with respect to a restriction of scalars.
We first consider the heavy $\psi_{*}$-separability of $\phi^{*}$. We note that $\phi$ induces a morphism $\phi_{\psi(\_\_),\_\_} : {\mathscr{R}}(\psi(\_\_),\_\_) \longrightarrow {\mathscr{S}}(\phi(\psi(\_\_)),\phi(\_\_))$ of $(\mathscr{R}, \mathscr{Q})$-bimodules.
\begin{lem}\label{y1}
  There is a one one correspondence between natural transformations $\gamma : \psi_{*}\phi_{*}\phi^{*} \longrightarrow \psi_{*}$ such that $\gamma \circ \psi_{*}\eta = id$ and morphisms $\alpha : {\mathscr{S}}\left(\phi(\psi(\_\_)),\phi(\_\_)\right) \longrightarrow {\mathscr{R}}\left(\psi(\_\_),\_\_\right)$ of $(\mathscr{R}, \mathscr{Q})$-bimodules such that $\alpha \circ \phi_{\psi(\_\_),\_\_} = id$ given by,
\begin{equation}\label{z1}
  \begin{split}
     \gamma &\mapsto \alpha := \left(\alpha_{(a,b)} := \left(\gamma_{H_b}\right)_a : H_b \otimes_{\mathscr{R}} {\mathscr{S}}\left(\phi(\psi(a)), \phi(\_\_)\right) \cong {\mathscr{S}}\left(\phi(\psi(a)), \phi(b)\right) \longrightarrow {\mathscr{R}}\left(\psi(a), b\right)\right)_{(a,b) \in {\mathscr{Q}^{op} \otimes \mathscr{R}}}\\
     \alpha &\mapsto \gamma := \left(\gamma_{\mathscr{M}} := \left(\left(\gamma_{\mathscr{M}}\right)_a : \mathscr{M} \otimes_{\mathscr{R}} {\mathscr{S}}\left(\phi(\psi(a)), \phi(\_\_)\right) \longrightarrow \mathscr{M}(\psi(a)), m \otimes f \mapsto \mathscr{M}\left(\alpha(f)\right)(m)\right)_{a \in \mathscr{Q}}\right)_{\mathscr{M} \in \mathcal{M}_{\mathscr{R}}}
  \end{split}
\end{equation}
\end{lem}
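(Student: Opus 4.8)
The plan is to establish the bijection by constructing the two assignments explicitly and verifying they are mutually inverse, with the compatibility conditions matching up on both sides. The key structural input is that right $\mathscr{R}$-modules are additive functors $\mathscr{R}^{op}\longrightarrow\mathbf{Ab}$, so a natural transformation out of an extension-of-scalars module is controlled by its behavior on the representable modules $H_b=\mathscr{R}(\_\_,b)$, via the (enriched) Yoneda lemma. First I would record the canonical isomorphism $H_b\otimes_{\mathscr{R}}\mathscr{S}(\phi(\psi(a)),\phi(\_\_))\cong\mathscr{S}(\phi(\psi(a)),\phi(b))$ used in \eqref{z1}; this comes from the coend defining the tensor product together with the co-Yoneda (density) formula $\mathscr{M}\otimes_{\mathscr{R}}\mathscr{R}(\_\_,b)\cong\mathscr{M}(b)$ applied with $\mathscr{M}=\mathscr{S}(\phi(\psi(a)),\phi(\_\_))$ viewed appropriately. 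Under this identification, evaluating a natural transformation $\gamma:\psi_*\phi_*\phi^*\longrightarrow\psi_*$ on $H_b$ at the object $a$ produces exactly the component $\alpha_{(a,b)}$, and I would check that the resulting family is natural in $(a,b)$, i.e. is a genuine morphism of $(\mathscr{R},\mathscr{Q})$-bimodules.

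Next I would verify that the two conditions correspond. For the direction $\gamma\mapsto\alpha$, the normalization $\gamma\circ\psi_*\eta=id$ unwinds, on the representable $H_b$ and using the explicit formula for $\eta$ (sending $m\mapsto m\otimes 1_{\phi(a)}$), into the statement that $\alpha$ splits the map $\phi_{\psi(\_\_),\_\_}$ induced by $\phi$ on hom-groups, namely $\alpha\circ\phi_{\psi(\_\_),\_\_}=id$. For the reverse direction $\alpha\mapsto\gamma$, I would take the candidate formula $(\gamma_{\mathscr{M}})_a(m\otimes f)=\mathscr{M}(\alpha(f))(m)$, confirm it is well defined on the coend (i.e. respects the relations generating the subgroup $T$ in \eqref{coend}, which follows from functoriality of $\mathscr{M}$ and the bimodule-map property of $\alpha$), and confirm naturality in $\mathscr{M}$. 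The splitting condition $\alpha\circ\phi_{\psi(\_\_),\_\_}=id$ then gives back $\gamma\circ\psi_*\eta=id$ by a direct computation: $(\gamma_{\mathscr{M}})_a(m\otimes 1_{\phi(a)})=\mathscr{M}(\alpha(1_{\phi(a)}))(m)=\mathscr{M}(1_a)(m)=m$.

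Finally I would check that the two assignments are inverse to one another. Starting from $\gamma$, passing to $\alpha$ and back to a natural transformation $\gamma'$, I would show $\gamma'=\gamma$ by testing on representables and invoking that any $\mathscr{M}$ is a colimit of representables (so a natural transformation is determined by its values on the $H_b$); conversely, starting from $\alpha$ and recovering $\alpha'$ through the roundtrip reduces, on $H_b$, to the defining formula and the identification above. The main obstacle I anticipate is the bookkeeping around the coend/tensor product: verifying that $(\gamma_{\mathscr{M}})_a$ is well defined modulo $T$ and that the isomorphism $H_b\otimes_{\mathscr{R}}\mathscr{S}(\phi(\psi(a)),\phi(\_\_))\cong\mathscr{S}(\phi(\psi(a)),\phi(b))$ is genuinely natural in all variables, since the several composite functors $\psi_*\phi_*\phi^*$ and the bimodule structures must be threaded consistently through the density argument. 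This is routine in principle but is where an error is most likely to hide, so I would carry out the co-Yoneda reduction carefully and treat naturality in $(a,b)$ as the technical heart rather than the condition-matching, which is comparatively mechanical.
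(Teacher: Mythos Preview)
Your proposal is correct and follows essentially the same approach as the paper: define $\alpha$ by evaluating $\gamma$ on the representables $H_b$, define $\gamma$ from $\alpha$ by the explicit coend formula, verify the bimodule/naturality conditions and the match between the normalization conditions, and then check the assignments are mutual inverses. The only cosmetic difference is that you frame the argument more explicitly through co-Yoneda/density, whereas the paper carries out the same checks by direct computation; one small notational slip is that in your normalization calculation you should have $\alpha(1_{\phi(\psi(a))})=1_{\psi(a)}$ rather than $\alpha(1_{\phi(a)})=1_a$, since $a\in\mathscr{Q}$.
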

\begin{proof}
   Let $\gamma : \psi_{*}\phi_{*}\phi^{*} \longrightarrow \psi_{*}$ be a natural transformation satisfying $\gamma \circ \psi_{*}\eta = id$.  We first verify that the corresponding  $\alpha$, as defined in \eqref{z1}, is a morphism of $(\mathscr{R}, \mathscr{Q})$-bimodules.  We consider  $f^{op} \otimes g \in {\mathscr{Q}^{op} \otimes \mathscr{R}}((a,b), (a',b'))$. The natural transformation $\gamma$ gives us the commutative square :

\begin{equation}\label{dg6.5}
   \begin{tikzcd}[row sep=huge, column sep=huge]
    {\mathscr{S}}(\phi(\psi(\_\_)), \phi(b)) \arrow{r}{H_{\phi(g)}(\phi \circ \psi)} \arrow[swap]{d}{\gamma_{H_b}} &{\mathscr{S}}(\phi(\psi(\_\_)), \phi(b')) \arrow{d}{\gamma_{H_{b'}}}\\
     {\mathscr{R}}(\psi(\_\_), b) \arrow{r}{H_g \psi} & {\mathscr{R}}(\psi(\_\_), b')
   \end{tikzcd}
\end{equation}
Evaluating \eqref{dg6.5} at  $a \in \mathscr{Q}^{op}$ and applying the naturality of $\gamma_{H_{b'}}$ to the morphism $f \in \mathscr Q(a',a)$, we have the following commutative diagram:
\begin{equation}\small
   \begin{tikzcd}[row sep=huge, column sep=huge]
    {\mathscr{S}}(\phi(\psi(a)), \phi(b)) \arrow{r}{ \phi(g) \circ \_\_} \arrow[swap]{d}{\left(\gamma_{H_b}\right)_a} & {\mathscr{S}}(\phi(\psi(a)), \phi(b')) \arrow[swap]{r}[swap]{\_\_ \circ \phi(\psi(f))} \arrow{d}{\left(\gamma_{H_{b'}}\right)_a} & {\mathscr{S}}(\phi(\psi(a')), \phi(b')) \arrow{d}{\left(\gamma_{H_{b'}}\right)_{a'}}\\
     {\mathscr{R}}(\psi(a), b) \arrow{r}{g \circ \_\_} & {\mathscr{R}}(\psi(a), b') \arrow{r}{\_\_ \circ \psi(f)} & {\mathscr{R}}(\psi(a'),b')
   \end{tikzcd}
\end{equation}
Therefore, for any $h \in {\mathscr{S}}\left(\phi(\psi(a)), \phi(b)\right)$,
$$g \circ \alpha_{(a,b)}(h) \circ \psi(f) = g \circ \left(\gamma_{H_b}\right)_a(h) \circ \psi(f) = \left(\gamma_{H_{b'}}\right)_{a'}(\phi(g) \circ h \circ \phi(\psi(f))) = \alpha_{(a',b')}(\phi(g) \circ h \circ \phi(\psi(f)))$$This shows that $\alpha$ is a morphism of $(\mathscr{R}, \mathscr{Q})$-bimodules. Also, since $\gamma \circ \psi_{*}\eta = id$, hence for any $a \in \mathscr{Q}, b \in \mathscr{R}$, and $r \in {\mathscr{R}}(\psi(a), b)$,
\begin{equation*}
     r = \left(\gamma_{H_b}\right)_a\left(\left(\left(\psi_{*}\eta\right)_{H_b}\right)_a(r)\right) = \alpha_{(a,b)}\left(\left(\eta_{H_b}\right)_{\psi(a)}(r)\right) = \alpha_{(a,b)}(\phi(r)) = \alpha_{(a,b)}\left(\left(\phi_{\psi(\_\_),\_\_}\right)_{(a,b)}(r)\right)
\end{equation*}
Therefore, $\alpha \circ \phi_{\psi(\_\_),\_\_} = id$.

\smallskip
Conversely, let $\alpha : {\mathscr{S}}\left(\phi(\psi(\_\_)),\phi(\_\_)\right) \longrightarrow {\mathscr{R}}\left(\psi(\_\_),\_\_\right)$ be a morphism of $(\mathscr{R}, \mathscr{Q})$-bimodules such that $\alpha \circ \phi_{\psi(\_\_), \_\_} = id$. We consider the corresponding $\gamma$ as defined in \eqref{z1}. We note that for $\mathscr M \in \mathcal{M}_{\mathscr{R}}, a \in \mathscr Q$, the well-definedness of $\left(\gamma_{\mathscr{M}}\right)_a$ follows from the universal property of the coend \eqref{coend}. It may be verified that $\gamma$ is a natural transformation. Also, for any $\mathscr{M} \in \mathcal{M}_{\mathscr{R}}, a \in \mathscr{Q}, m \in \mathscr{M}(\psi(a))$,
\begin{equation*}
  \begin{split}
    \left(\gamma_{\mathscr{M}}\right)_a\left(\left(\left(\psi_{*}\eta\right)_{\mathscr{M}}\right)_a(m)\right) &= \left(\gamma_{\mathscr{M}}\right)_a\left(\left(\eta_{\mathscr{M}}\right)_{\psi(a)}(m)\right)\\
    &= \left(\gamma_{\mathscr{M}}\right)_a\left(m \otimes 1_{\phi(\psi(a))}\right) = \mathscr{M}\left(\alpha_{(a, \psi(a))}\left(1_{\phi(\psi(a))}\right)\right)(m) = \mathscr{M}\left(1_{\psi(a)}\right)(m) = m
  \end{split}
\end{equation*}
Thus, $\gamma \circ \psi_{*}\eta = id$. It may also be verified that the two assignments $\gamma \mapsto \alpha$ and $\alpha \mapsto \gamma$ are mutual inverses.
\end{proof}
\begin{Thm}\label{ext}
$\phi^{*}$ is heavily $\psi_{*}$-separable if and only if there is a morphism $\alpha : {\mathscr{S}}\left(\phi(\psi(\_\_)),\phi(\_\_)\right) \longrightarrow {\mathscr{R}}\left(\psi(\_\_),\_\_\right)$ of $(\mathscr{R}, \mathscr{Q})$-bimodules such that 

\smallskip
(1) $\alpha \circ \phi_{\psi(\_\_),\_\_} = id$ and

\smallskip
(2) For every $a \in \mathscr{Q}$, $b,c \in \mathscr{R}$, and every $f \in {\mathscr{S}}\left(\phi(\psi(a)), \phi(b)\right), g \in {\mathscr{S}}\left(\phi(b), \phi(c)\right)$ $$\alpha_{a,c}(g \circ f) = \alpha_{a,c}\left(g \circ \phi\left(\alpha_{a,b}(f)\right)\right)$$
\end{Thm}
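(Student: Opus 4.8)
The plan is to read off Theorem~\ref{ext} from the Rafael-type criterion of Theorem~\ref{RTT} combined with the dictionary of Lemma~\ref{y1}. Applying Theorem~\ref{RTT}(1) to the adjoint pair $(\phi^{*},\phi_{*})$ with $I=\psi_{*}$, the functor $\phi^{*}$ is heavily $\psi_{*}$-separable if and only if there is a natural transformation $\gamma:\psi_{*}\phi_{*}\phi^{*}\longrightarrow\psi_{*}$ with $\gamma\circ\psi_{*}\eta=id$ and
\[\gamma\circ(\gamma\phi_{*}\phi^{*})=\gamma\circ(\psi_{*}\phi_{*}\varepsilon\phi^{*}).\]
By Lemma~\ref{y1}, the natural transformations $\gamma$ satisfying $\gamma\circ\psi_{*}\eta=id$ correspond bijectively to the bimodule morphisms $\alpha$ satisfying condition~(1). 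Hence it remains only to show that, under this bijection, the displayed compatibility equation on $\gamma$ is equivalent to condition~(2) on $\alpha$.

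The core of the argument is to evaluate both sides of the compatibility equation explicitly. Fixing a right $\mathscr{R}$-module $\mathscr{M}$ and an object $a\in\mathscr{Q}$, both sides become $\mathbb{Z}$-linear maps out of $(\phi_{*}\phi^{*}\phi_{*}\phi^{*}\mathscr{M})(\psi(a))$, which by the coend \eqref{coend} is spanned by generators $(m\otimes g)\otimes f$ with $b,c\in\mathscr{R}$, $m\in\mathscr{M}(c)$, $g\in\mathscr{S}(\phi(b),\phi(c))$ and $f\in\mathscr{S}(\phi(\psi(a)),\phi(b))$. Using the explicit counit $\varepsilon$, the formula $(\gamma_{\mathscr{M}})_{a}(m\otimes f)=\mathscr{M}(\alpha_{(a,b)}(f))(m)$ from \eqref{z1}, and the fact that these functors precompose the $\mathscr{S}$-component of a tensor $m\otimes(-)$, I expect to obtain
\[\big(\gamma\circ(\psi_{*}\phi_{*}\varepsilon\phi^{*})\big)_{\mathscr{M},a}\big((m\otimes g)\otimes f\big)=\mathscr{M}\big(\alpha_{(a,c)}(g\circ f)\big)(m),\]
\[\big(\gamma\circ(\gamma\phi_{*}\phi^{*})\big)_{\mathscr{M},a}\big((m\otimes g)\otimes f\big)=\mathscr{M}\big(\alpha_{(a,c)}(g\circ\phi(\alpha_{(a,b)}(f)))\big)(m).\]
The first holds because $\varepsilon$ merges $g$ and $f$ into $g\circ f$ before $\gamma$ is applied; the second because the inner $\gamma$ first converts $f$ into $\phi(\alpha_{(a,b)}(f))$ and glues it onto $g$, after which the outer $\gamma$ is applied.

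With these two formulas in hand the equivalence is immediate. If the compatibility equation holds, the two right-hand sides agree for every $\mathscr{M}$; specializing to the representable module $\mathscr{M}=H_{c}=\mathscr{R}(\_\_,c)$ and $m=1_{c}$ gives exactly $\alpha_{(a,c)}(g\circ f)=\alpha_{(a,c)}(g\circ\phi(\alpha_{(a,b)}(f)))$, which is condition~(2). Conversely, condition~(2) forces the two generator-level expressions to coincide for all $\mathscr{M}$ and $m$, and since these generators span the domain, the two natural transformations are equal. The main obstacle I anticipate is the bookkeeping in the middle step: correctly unwinding the fourfold composite $\phi_{*}\phi^{*}\phi_{*}\phi^{*}$, tracing a single generator through the counit and through each of the two copies of $\gamma$ in the right order, and checking that each intermediate assignment is well defined on the coend. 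Once this computation is carried out, passing between the compatibility equation and condition~(2) is routine.
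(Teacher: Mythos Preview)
Your proposal is correct and follows essentially the same approach as the paper: reduce to Theorem~\ref{RTT}, invoke Lemma~\ref{y1} for the bijection handling condition~(1), then evaluate both sides of $\gamma\circ(\gamma\phi_{*}\phi^{*})=\gamma\circ(\psi_{*}\phi_{*}\varepsilon\phi^{*})$ on generators $(m\otimes g)\otimes f$ to obtain exactly the two expressions you wrote, and finish by specializing to $\mathscr{M}=H_{c}$, $m=1_{c}$. The paper's proof carries out precisely this computation as a chain of $\Leftrightarrow$'s, so there is no substantive difference.
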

\begin{proof}
  By Theorem \ref{RTT}, $\phi^{*}$ is heavily $\psi_{*}$-separable if and only if there is a natural transformation $\gamma : \psi_{*}\phi_{*}\phi^{*} \longrightarrow \psi_{*}$  such that $\gamma \circ \psi_{*}\eta = id$ and $\gamma \circ \gamma \phi_{*}\phi^{*} = \gamma \circ \psi_{*}\phi_{*}\varepsilon\phi^{*}$. By Lemma \ref{y1}, there is a one one correspondence between natural transformations $\gamma : \psi_{*}\phi_{*}\phi^{*} \longrightarrow \psi_{*}$  such that $\gamma \circ \psi_{*}\eta = id$ and morphisms $\alpha : {\mathscr{S}}\left(\phi(\psi(\_\_)),\phi(\_\_)\right) \longrightarrow {\mathscr{R}}\left(\psi(\_\_),\_\_\right)$ of $(\mathscr{R}, \mathscr{Q})$-bimodules such that $\alpha \circ \phi_{\psi(\_\_),\_\_} = id$, given by \eqref{z1}.

\smallskip
Additionally, we see that, for $\mathscr M \in \mathcal{M}_{\mathscr{R}}, a \in \mathscr{Q}, b, c \in \mathscr{R}, f \in {\mathscr{S}}\left(\phi(\psi(a)), \phi(b)\right), g \in {\mathscr{S}}\left(\phi(b), \phi(c)\right), m \in \mathscr{M}(c)$
\begin{equation*}
  \begin{array}{ll}
    & \gamma \circ \gamma \phi_{*}\phi^{*} = \gamma \circ \psi_{*}\phi_{*}\varepsilon\phi^{*}\\
    \Leftrightarrow\text{ }& \left(\gamma_{\mathscr{M}}\right)_a \circ \left(\left(\gamma\phi_{*}\phi^{*}\right)_{\mathscr{M}}\right)_a = \left(\gamma_{\mathscr{M}}\right)_a \circ \left(\left(\psi_{*}\phi_{*}\varepsilon\phi^{*}\right)_{\mathscr{M}}\right)_a\\
    \Leftrightarrow\text{ }& \left(\gamma_{\mathscr{M}}\right)_a \circ \left(\gamma_{\mathscr M \otimes_{\mathscr{R}} {\mathscr{S}}\left(\phi(\_\_), \phi(\_\_)\right)}\right)_a = \left(\gamma_{\mathscr{M}}\right)_a \circ \left(\varepsilon_{\mathscr M \otimes_{\mathscr{R}} {\mathscr{S}}\left(\_\_, \phi(\_\_)\right)}\right)_{\phi(\psi(a))}\\
    \Leftrightarrow\text{ }& \left(\gamma_{\mathscr{M}}\right)_a\left(\left(\gamma_{\mathscr M \otimes_{\mathscr{R}} {\mathscr{S}}\left(\phi(\_\_), \phi(\_\_)\right)}\right)_a\left((m \otimes g) \otimes f\right)\right) = \left(\gamma_{\mathscr{M}}\right)_a\left(\left(\varepsilon_{\mathscr M \otimes_{\mathscr{R}} {\mathscr{S}}\left(\_\_, \phi(\_\_)\right)}\right)_{\phi(\psi(a))}\left((m \otimes g) \otimes f\right)\right)\\
    \Leftrightarrow\text{ }& \left(\gamma_{\mathscr{M}}\right)_a\left(\left(\left(\mathscr M \otimes_{\mathscr{R}} {\mathscr{S}}\left(\phi(\_\_), \phi(\_\_)\right)\right)\left(\alpha_{(a, b)}(f)\right)\right)\left(m \otimes g\right)\right) = \left(\gamma_{\mathscr{M}}\right)_a\left(\left(\left(\mathscr M \otimes_{\mathscr{R}} {\mathscr{S}}\left(\_\_, \phi(\_\_)\right)\right)(f)\right)\left(m \otimes g\right)\right)\\
    \Leftrightarrow\text{ }& \left(\gamma_{\mathscr{M}}\right)_a\left(m \otimes \left(g \circ \phi\left(\alpha_{(a,b)}(f)\right)\right)\right) = \left(\gamma_{\mathscr{M}}\right)_a\left(m \otimes (g \circ f)\right)\\
    \Leftrightarrow\text{ }& \mathscr{M}\left(\alpha_{(a,c)}\left(g \circ \phi\left(\alpha_{(a,b)}(f)\right)\right)\right)(m) = \mathscr{M}\left(\alpha_{(a,c)}(g \circ f)\right)(m)\\
    \Leftrightarrow\text{ }& \alpha_{(a,c)}\left(g \circ \phi\left(\alpha_{(a,b)}(f)\right)\right) = \alpha_{(a,c)}(g \circ f)
  \end{array}
\end{equation*}
The last line follows by taking $\mathscr{M} = H_c$ and $m = 1_c$. The result is now clear.
\end{proof}
Our next result deals with the heavy $\xi_{*}$-separability of the restriction of scalars $\phi_{*} : \mathcal{M}_{\mathscr{S}} \longrightarrow \mathcal{M}_{\mathscr{R}}$.
\begin{lem}\label{y2}
  There is a one one correspondence between natural transformations $\delta : \xi_{*} \longrightarrow \xi_{*}\phi^{*}\phi_{*}$ such that $\xi_{*}\varepsilon \circ \delta = id$ and collections $\Gamma = \left\{ \Gamma_a := \sum_{b \in Ob(\mathscr{R})} \left(\sum_i f_{i}^{ab} \otimes g_{i}^{ab}\right) \in {\mathscr{S}}(\phi(\_\_), \xi(a)) \otimes_{\mathscr{R}} {\mathscr{S}}(\xi(a), \phi(\_\_)) : a \in {\mathscr{T}}\right\}$ of elements given by finite sums such that
\begin{equation}\label{cod1}
    \sum_{b \in Ob(\mathscr{R})} \left(\sum_i \left(\xi(t) \circ f_{i}^{cb}\right) \otimes g_{i}^{cb}\right) = \sum_{b \in Ob(\mathscr{R})} \left(\sum_i f_{i}^{ab} \otimes \left(g_{i}^{ab} \circ \xi(t)\right)\right)\quad\text{for every }t \in {\mathscr{T}}(c, a), a,c \in Ob(\mathscr T)
\end{equation} and
\begin{equation}\label{cod2}
   \sum_{b \in Ob(\mathscr{R})} \left(\sum_i f_{i}^{ab} \circ g_{i}^{ab}\right) = 1_{\xi(a)}\quad\text{for all }a \in \mathscr{T}
\end{equation}
The one one correspondence is given by :
\begin{equation}\label{z2}
  \begin{array}{ll}
   \delta &\mapsto \Gamma := \left\{\Gamma_a:=\left(\delta_{H_{\xi(a)}}\right)_a(1_{\xi(a)}) : a \in \mathscr{T}\right\}\\
   \Gamma &\mapsto \delta := \left(\delta_{\mathscr{N}} := \left(\left(\delta_{\mathscr{N}}\right)_a : n \mapsto \sum_{b \in Ob(\mathscr{R})} \left(\sum_i \mathscr{N}\left(f_{i}^{ab}\right)(n) \otimes g_{i}^{ab}\right)\right)_{a \in \mathscr{T}}\right)_{\mathscr{N} \in \mathcal{M}_{\mathscr{S}}}
  \end{array}
\end{equation}  
\end{lem}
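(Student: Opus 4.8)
The plan is to mirror the proof of Lemma \ref{y1}, but in its dual form, using the representable right $\mathscr{S}$-modules $H_{\xi(a)} = \mathscr{S}(\_\_, \xi(a))$ together with the Yoneda lemma and the universal property of the coend \eqref{coend}. The starting observation is that evaluating the target functor at $a \in \mathscr{T}$ gives $(\xi_{*}\phi^{*}\phi_{*}H_{\xi(a)})(a) = (H_{\xi(a)} \circ \phi) \otimes_{\mathscr{R}} \mathscr{S}(\xi(a), \phi(\_\_)) = \mathscr{S}(\phi(\_\_), \xi(a)) \otimes_{\mathscr{R}} \mathscr{S}(\xi(a), \phi(\_\_))$, so that $\Gamma_a := (\delta_{H_{\xi(a)}})_a(1_{\xi(a)})$ is automatically an element of the required form $\sum_b \sum_i f_i^{ab} \otimes g_i^{ab}$.

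For the forward direction, given $\delta$ with $\xi_{*}\varepsilon \circ \delta = id$, I would define $\Gamma$ by \eqref{z2} and extract the two conditions as follows. Condition \eqref{cod1} comes from the fact that $\delta_{H_{\xi(a)}}$ is a morphism of right $\mathscr{T}$-modules, i.e. from its naturality in the variable $a \in \mathscr{T}$: evaluating the relevant naturality square at $t \in \mathscr{T}(c, a)$ and at $1_{\xi(a)}$, the right $\mathscr{T}$-action precomposes the $\mathscr{S}(\xi(a), \phi(\_\_))$-leg with $\xi(t)$ while the source action produces the factor $\xi(t) \circ f_i^{cb}$, and matching the two sides yields exactly \eqref{cod1}. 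Condition \eqref{cod2} comes from applying $\xi_{*}\varepsilon$: using the explicit formula for $\varepsilon$, one computes $(\xi_{*}\varepsilon \circ \delta)_{H_{\xi(a)}, a}(1_{\xi(a)}) = \sum_b \sum_i f_i^{ab} \circ g_i^{ab}$, which must equal $1_{\xi(a)}$.

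For the backward direction, given $\Gamma$ satisfying \eqref{cod1} and \eqref{cod2}, I would define $\delta$ by the formula in \eqref{z2} and first check that $(\delta_{\mathscr{N}})_a$ is well defined as a map into the coend. This is the point where the universal property of \eqref{coend} is used: the assignment $(f \otimes g) \mapsto (n \mapsto \mathscr{N}(f)(n) \otimes g)$ respects the defining relation of $\mathscr{S}(\phi(\_\_), \xi(a)) \otimes_{\mathscr{R}} \mathscr{S}(\xi(a), \phi(\_\_))$, because for $r \in \mathscr{R}$ the contravariance of $\mathscr{N}$ gives $\mathscr{N}(f \circ \phi(r))(n) \otimes g = \mathscr{N}(f)(n) \otimes (\phi(r) \circ g)$, which is precisely a coend relation in the target. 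That each $\delta_{\mathscr{N}}$ is a $\mathscr{T}$-module morphism then follows by reversing the computation of the forward direction using \eqref{cod1}, naturality in $\mathscr{N}$ is routine from the formula, and $\xi_{*}\varepsilon \circ \delta = id$ reduces, via the formula for $\varepsilon$, to $\mathscr{N}\big(\sum_b \sum_i f_i^{ab} \circ g_i^{ab}\big)(n) = \mathscr{N}(1_{\xi(a)})(n) = n$, which is exactly \eqref{cod2}.

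Finally, to see the two assignments are mutually inverse, the composite $\Gamma \mapsto \delta \mapsto \Gamma$ is immediate by substituting $\mathscr{N} = H_{\xi(a)}$ and $n = 1_{\xi(a)}$, since $H_{\xi(a)}(f_i^{ab})(1_{\xi(a)}) = f_i^{ab}$. For $\delta \mapsto \Gamma \mapsto \delta$, the key tool is Yoneda: every $n \in \mathscr{N}(\xi(a))$ equals $\hat{n}_{\xi(a)}(1_{\xi(a)})$ for a unique $\mathscr{S}$-module map $\hat{n} : H_{\xi(a)} \longrightarrow \mathscr{N}$, and applying naturality of $\delta$ in $\mathscr{N}$ to $\hat{n}$ recovers $(\delta_{\mathscr{N}})_a(n)$ from $\Gamma_a$. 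The main obstacle I anticipate is purely bookkeeping: keeping the contravariance of right modules and the two separate coend relations straight, so that \eqref{cod1} lines up exactly with the $\mathscr{T}$-module-morphism condition and the well-definedness over the coend goes through cleanly.
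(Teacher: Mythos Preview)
Your approach is essentially the paper's: forward via evaluating $\delta$ at $H_{\xi(a)}$ and $1_{\xi(a)}$, backward via the explicit formula and the coend universal property, mutual inverses via Yoneda (the paper leaves this last step as ``may be verified'', so your sketch is actually more detailed there).

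One point needs sharpening. Your derivation of \eqref{cod1} invokes only the fact that $\delta_{H_{\xi(a)}}$ is a morphism of right $\mathscr{T}$-modules, but that naturality square evaluated at $1_{\xi(a)}$ produces on the source side only $\xi(t)\in\mathscr{S}(\xi(c),\xi(a))$; it does not by itself produce $\xi(t)\circ f_i^{cb}$, since the $f_i^{cb}$ belong to $\Gamma_c=(\delta_{H_{\xi(c)}})_c(1_{\xi(c)})$, not to $\Gamma_a$. To reach the left side of \eqref{cod1} you also need naturality of $\delta$ in the variable $\mathscr{N}\in\mathcal{M}_{\mathscr{S}}$, applied to the map $H_{\xi(t)}:H_{\xi(c)}\to H_{\xi(a)}$: this sends $\Gamma_c$ to $\sum_b\sum_i(\xi(t)\circ f_i^{cb})\otimes g_i^{cb}$, and comparing the two computations of $(\delta_{H_{\xi(a)}})_c(\xi(t))$ then yields \eqref{cod1}. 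The paper makes both naturalities explicit as the two squares of diagram \eqref{diag1}. With that correction your argument goes through.
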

\begin{proof}
   Suppose that $\delta : \xi_{*} \longrightarrow \xi_{*}\phi^{*}\phi_{*}$ is a natural transformation such that $\xi_{*}\varepsilon \circ \delta = id$. We consider the corresponding collection $\Gamma := \left\{\Gamma_a:=\sum_{b \in Ob(\mathscr{R})} \left(\sum_i f_{i}^{ab} \otimes g_{i}^{ab}\right) = \left(\delta_{H_{\xi(a)}}\right)_a(1_{\xi(a)}) : a \in \mathscr{T}\right\}$, defined by \eqref{z2}. Let $t \in {\mathscr{T}}(c,a)$. Considering $H_{\xi(t)} : H_{\xi(c)} \longrightarrow H_{\xi(a)}$ in $\mathcal{M}_{\mathscr{S}}$, the natural transformation $\delta$ gives the commutative square:
\begin{equation}\label{cd6.10}
   \begin{tikzcd}[row sep=huge, column sep=huge]
    {\mathscr{S}}(\xi(\_\_), \xi(c)) \arrow{r}{H_{\xi(t)}\xi} \arrow[swap]{d}{\delta_{H_{\xi(c)}}} & {\mathscr{S}}(\xi(\_\_), \xi(a)) \arrow{d}{\delta_{H_{\xi(a)}}}\\
     {\mathscr{S}}(\phi(\_\_), \xi(c)) \otimes_{\mathscr{R}} {\mathscr{S}}(\xi(\_\_), \phi(\_\_)) \arrow{r}{ H_{\xi(t)}\phi\text{ }\otimes_{\mathscr{R}}\text{ }id} &  {\mathscr{S}}(\phi(\_\_), \xi(a)) \otimes_{\mathscr{R}} {\mathscr{S}}(\xi(\_\_), \phi(\_\_))
   \end{tikzcd}
\end{equation}
Evaluating \eqref{cd6.10} at $c \in {\mathscr{T}}$ and applying the naturality of $\delta_{H_{\xi(a)}}$ to the morphism $t^{op} \in {\mathscr{T}^{op}}(a, c)$, we have the following commutative diagram :
\begin{equation}\label{diag1}\small
   \begin{tikzcd}[row sep=huge, column sep=huge]
    {\mathscr{S}}(\xi(c), \xi(c)) \arrow{r}{\xi(t)\text{ }\circ\text{ }\_\_} \arrow[swap]{d}{\left(\delta_{H_{\xi(c)}}\right)_c} & {\mathscr{S}}(\xi(c), \xi(a)) \arrow{d}{\left(\delta_{H_{\xi(a)}}\right)_c} & {\mathscr{S}}(\xi(a), \xi(a)) \arrow{l}[swap]{\_\_\text{ }\circ\text{ }\xi(t)} \arrow[swap]{d}[swap]{\left(\delta_{H_{\xi(a)}}\right)_a}\\
    {\mathscr{S}}(\phi(\_\_), \xi(c)) \otimes_{\mathscr{R}} {\mathscr{S}}(\xi(c), \phi(\_\_)) \arrow{r}{H_{\xi(t)}\phi\text{ }\otimes_{\mathscr{R}}\text{ }id} & {\mathscr{S}}(\phi(\_\_), \xi(a)) \otimes_{\mathscr{R}} {\mathscr{S}}(\xi(c), \phi(\_\_)) & {\mathscr{S}}(\phi(\_\_), \xi(a)) \otimes_{\mathscr{R}} {\mathscr{S}}(\xi(a), \phi(\_\_)) \arrow{l}[swap]{id\text{ }\otimes_{\mathscr{R}}\text{ }_{\xi(t)}H\phi}
   \end{tikzcd}
\end{equation}
where $_{\xi(t)}H : { }_{\xi(a)}H\longrightarrow { }_{\xi(c)}H$ is the morphism induced by $t \in {\mathscr{T}}(c, a)$.
The two commutative squares in \eqref{diag1} give
\begin{equation*}
  \begin{split}
     \sum_{b \in Ob(\mathscr{R})} \left(\sum_i \left(\xi(t) \circ f_{i}^{cb}\right) \otimes g_{i}^{cb}\right) &= \left(H_{\xi(t)}\phi \otimes_{\mathscr{R}} id\right)\left(\left(\delta_{H_{\xi(c)}}\right)_c\left(1_{\xi(c)}\right)\right) = \left(\delta_{H_{\xi(a)}}\right)_c(\xi(t))\\
&= \left(id \otimes_{\mathscr{R}}{ }_{\xi(t)}H\phi\right)\left(\left(\delta_{H_{\xi(a)}}\right)_a\left(1_{\xi(a)}\right)\right) = \sum_{b \in Ob(\mathscr{R})} \left(\sum_i f_{i}^{ab} \otimes \left(g_{i}^{ab} \circ \xi(t)\right)\right)
  \end{split}
\end{equation*}
Hence, $\Gamma$ satisfies condition \eqref{cod1}. Further, since $\xi_{*}\varepsilon \circ \delta = id$, hence for any $a \in \mathscr{T}$,
\begin{equation*}
   \begin{split}
       1_{\xi(a)} &= \left(\left(\xi_{*}\varepsilon\right)_{H_{\xi(a)}}\right)_a\left(\left(\delta_{H_{\xi(a)}}\right)_a\left(1_{\xi(a)}\right)\right) = \left(\varepsilon_{H_{\xi(a)}}\right)_{\xi(a)}\left(\sum_{b \in Ob(\mathscr{R})} \left(\sum_i f_{i}^{ab} \otimes g_{i}^{ab}\right)\right)\\
       &= \sum_{b \in Ob(\mathscr{R})} \left(\sum_i H_{\xi(a)}\left(g_{i}^{ab}\right)\left(f_{i}^{ab}\right)\right) = \sum_{b \in Ob(\mathscr{R})} \left(\sum_i f_{i}^{ab} \circ g_{i}^{ab}\right)
   \end{split}
\end{equation*}
Thus, $\Gamma$ satisfies condition \eqref{cod2}.

\smallskip
Conversely, let $\Gamma = \left\{ \Gamma_a : a \in \mathscr{T}\right\}$ be a collection satisfying conditions \eqref{cod1} and \eqref{cod2}. It may be checked that the corresponding $\delta$, as defined in \eqref{z2}, is a natural transformation $\xi_{*} \longrightarrow \xi_{*}\phi^{*}\phi_{*}$. Also, for any $\mathscr{N} \in \mathcal{M}_{\mathscr{S}}, a \in \mathscr{T}, n \in \mathscr{N}(\xi(a))$,
\begin{equation*}
  \begin{split}
    \left(\left(\xi_{*}\varepsilon\right)_{\mathscr{N}}\right)_a\left(\left(\delta_{\mathscr{N}}\right)_a(n)\right) &= \left(\varepsilon_{\mathscr{N}}\right)_{\xi(a)}\left(\sum_{b \in Ob(\mathscr{R})} \left(\sum_i \mathscr{N}\left(f_{i}^{ab}\right)(n) \otimes g_{i}^{ab}\right)\right)\\
   &= \sum_{b \in Ob(\mathscr{R})} \left(\sum_i \mathscr{N}\left(g_{i}^{ab}\right)\left(\mathscr{N}\left(f_{i}^{ab}\right)(n)\right)\right) = \sum_{b \in Ob(\mathscr{R})} \left(\sum_i \mathscr{N}\left(f_{i}^{ab} \circ g_{i}^{ab}\right)(n)\right)\\
   &= \mathscr{N}\left(\sum_{b \in Ob(\mathscr{R})} \left(\sum_i f_{i}^{ab} \circ g_{i}^{ab}\right)\right)(n) = \mathscr{N}\left(1_{\xi(a)}\right)(n) = n
  \end{split}
\end{equation*}
Thus, $\xi_{*}\varepsilon \circ \delta = id$. It may be verified that the assignments $\delta \mapsto \Gamma$ and $\Gamma \mapsto \delta$ are mutual inverses.
\end{proof}
\begin{Thm}\label{res}
   $\phi_{*}$ is heavily $\xi_{*}$-separable if and only if there is a collection of elements given  by finite sums$$\Gamma = \left\{\Gamma_a := \sum_{b \in Ob(\mathscr{R})} \left(\sum_i f_{i}^{ab} \otimes g_{i}^{ab}\right) \in {\mathscr{S}}\left(\phi(\_\_), \xi(a)\right) \otimes_{\mathscr{R}} {\mathscr{S}}\left(\xi(a), \phi(\_\_)\right) : a \in {\mathscr{T}}\right\}$$which satisfies the following conditions :
\begin{equation}\label{cond1}
  \small \sum_{b \in Ob(\mathscr{R})} \left(\sum_i \left(\xi(t) \circ f_{i}^{cb}\right) \otimes g_{i}^{cb}\right) = \sum_{b \in Ob(\mathscr{R})} \left(\sum_i f_{i}^{ab} \otimes \left(g_{i}^{ab} \circ \xi(t)\right)\right)\quad\text{for every }t \in {\mathscr{T}}(c, a)
\end{equation}
\begin{equation}\label{cond2}
\small   \sum_{b \in Ob(\mathscr{R})} \left(\sum_i f_{i}^{ab} \circ g_{i}^{ab}\right) = 1_{\xi(a)}\quad\text{for all }a \in \mathscr{T}
\end{equation}
\begin{equation}\label{cond3}
\small    \sum_{d \in Ob(\mathscr{R})} \left(\sum_k \left(\sum_{b \in Ob(\mathscr{R})} \left(\sum_i f_{i}^{ab} \otimes \left(g_{i}^{ab} \circ f_{k}^{ad}\right)\right)\right)\otimes g_{k}^{ad}\right) = \sum_{b \in Ob(\mathscr{R})} \left(\sum_i \left(f_{i}^{ab} \otimes 1_{\phi(b)}\right) \otimes g_{i}^{ab}\right)\quad\text{for all }a \in \mathscr{T}
\end{equation}
\end{Thm}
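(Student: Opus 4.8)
The plan is to reduce the statement to the Rafael type criterion of Theorem \ref{RTT}(2) combined with the correspondence already set up in Lemma \ref{y2}. Applying Theorem \ref{RTT}(2) to the adjunction $(\phi^{*},\phi_{*})$ with $J=\xi_{*}$, the functor $\phi_{*}$ is heavily $\xi_{*}$-separable if and only if there is a natural transformation $\delta:\xi_{*}\longrightarrow\xi_{*}\phi^{*}\phi_{*}$ satisfying $\xi_{*}\varepsilon\circ\delta=id$ together with the compatibility condition $(\delta\phi^{*}\phi_{*})\circ\delta=(\xi_{*}\phi^{*}\eta\phi_{*})\circ\delta$. Lemma \ref{y2} already provides a bijection between those $\delta$ that satisfy $\xi_{*}\varepsilon\circ\delta=id$ and the collections $\Gamma$ obeying \eqref{cond1} (which is \eqref{cod1}) and \eqref{cond2} (which is \eqref{cod2}). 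So the whole task is to show that, under this bijection, the remaining compatibility condition translates exactly into condition \eqref{cond3}.

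To carry this out, I would evaluate both natural transformations at an arbitrary $\mathscr{N}\in\mathcal{M}_{\mathscr{S}}$, an object $a\in\mathscr{T}$, and an element $n\in\mathscr{N}(\xi(a))$. Writing $\Gamma_a=\sum_{b}\sum_i f_{i}^{ab}\otimes g_{i}^{ab}$ and using the explicit formula for $\delta$ in \eqref{z2}, the description of the unit $\eta$, and the action of $\phi^{*}$ and $\phi_{*}$ on morphisms, I expect the left-hand side $(\delta\phi^{*}\phi_{*})\circ\delta$ to send $n$ to $\sum_{d}\sum_k\bigl(\sum_{b}\sum_i\mathscr{N}(f_{i}^{ab})(n)\otimes(g_{i}^{ab}\circ f_{k}^{ad})\bigr)\otimes g_{k}^{ad}$, and the right-hand side $(\xi_{*}\phi^{*}\eta\phi_{*})\circ\delta$ to send $n$ to $\sum_{b}\sum_i(\mathscr{N}(f_{i}^{ab})(n)\otimes 1_{\phi(b)})\otimes g_{i}^{ab}$, both living in $(\phi^{*}\phi_{*}\phi^{*}\phi_{*}\mathscr{N})(\xi(a))$. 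The delicate point on the right is that $\phi^{*}$ applied to $\eta_{\phi_{*}\mathscr{N}}$ acts as the identity on the distinguished $\mathscr{S}(\_\_,\phi(\_\_))$ factor and inserts the element $1_{\phi(b)}$ coming from the unit.

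The equivalence with \eqref{cond3} then follows by the same reduction to representable modules already used in the proof of Theorem \ref{ext}. The two displayed elements are precisely the images of the two sides of \eqref{cond3} under the morphism of right $\mathscr{R}$-modules $\mathscr{S}(\phi(\_\_),\xi(a))\longrightarrow\mathscr{N}(\phi(\_\_))$, $f\mapsto\mathscr{N}(f)(n)$, followed by the relevant functors; hence if \eqref{cond3} holds as an identity in the universal group, the two sides agree for every $\mathscr{N}$ and $n$, giving the compatibility condition. Conversely, specializing to $\mathscr{N}=H_{\xi(a)}$ and $n=1_{\xi(a)}$, where $\mathscr{N}(f_{i}^{ab})(1_{\xi(a)})=f_{i}^{ab}$, recovers \eqref{cond3} verbatim.

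The hard part will be the first step, namely expanding and simplifying the two whiskered composites into the two clean expressions above; this requires careful bookkeeping of which tensor factors are over $\mathscr{R}$ and which over $\mathbb{Z}$, and of the nested coend structure defining $\phi^{*}\phi_{*}\phi^{*}\phi_{*}\mathscr{N}$. Once those expressions are secured, the passage to the representable module $H_{\xi(a)}$ and the resulting two-way implication are routine, exactly as in Theorem \ref{ext}.
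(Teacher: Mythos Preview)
Your proposal is correct and follows essentially the same route as the paper: invoke Theorem~\ref{RTT}(2) for the adjunction $(\phi^{*},\phi_{*})$ with $J=\xi_{*}$, use Lemma~\ref{y2} to handle conditions \eqref{cond1} and \eqref{cond2}, then evaluate both sides of the remaining compatibility at an arbitrary $\mathscr{N}$, $a$, $n$ and specialize to $\mathscr{N}=H_{\xi(a)}$, $n=1_{\xi(a)}$ to recover \eqref{cond3}. The explicit expressions you predict for the two whiskered composites match the paper's computation exactly.
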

\begin{proof}
   By Theorem \ref{RTT}, $\phi_{*}$ is heavily $\xi_{*}$-separable if and only if there is a natural transformation $\delta : \xi_{*} \longrightarrow \xi_{*}\phi^{*}\phi_{*}$ such that $\xi_{*}\varepsilon \circ \delta = id$ and $\delta\phi^{*}\phi_{*} \circ \delta = \xi_{*}\phi^{*}\eta\phi_{*} \circ \delta$. By Lemma \ref{y2}, there is a one one correspondence given by \eqref{z2}, between natural transformations $\delta : \xi_{*} \longrightarrow \xi_{*}\phi^{*}\phi_{*}$ such that $\xi_{*}\varepsilon \circ \delta = id$ and collections $\Gamma = \left\{\Gamma_a := \sum_{b \in Ob(\mathscr{R})} \left(\sum_i f_{i}^{ab} \otimes g_{i}^{ab}\right) \in {\mathscr{S}}(\phi(\_\_), \xi(a)) \otimes_{\mathscr{R}} {\mathscr{S}}(\xi(a), \phi(\_\_)) : a \in \mathscr{T}\right\}$ which satisfy conditions \eqref{cond1} and \eqref{cond2}.

\smallskip
Additionally, we see that for $\mathscr{N} \in \mathcal{M}_{\mathscr{S}}, a \in \mathscr{T}, n \in \mathscr{N}(\xi(a))$,
\begin{equation*}
  \begin{array}{ll}
     &\delta\phi^{*}\phi_{*} \circ \delta = \xi_{*}\phi^{*}\eta\phi_{*} \circ \delta\\
     \Leftrightarrow\text{ }& \left(\left(\left(\delta\phi^{*}\phi_{*}\right)_{\mathscr{N}}\right)_a \circ \left(\delta_{\mathscr{N}}\right)_a\right)(n) = \left(\left(\left(\xi_{*}\phi^{*}\eta\phi_{*}\right)_{\mathscr{N}}\right)_a \circ \left(\delta_{\mathscr{N}}\right)_a\right)(n)\\
     \Leftrightarrow\text{ }& \left(\left(\delta\phi^{*}\phi_{*}\right)_{\mathscr{N}}\right)_a\left(\sum_{b \in Ob(\mathscr{R})} \left(\sum_i \mathscr{N}\left(f_{i}^{ab}\right)(n) \otimes g_{i}^{ab}\right)\right) = \left(\left(\xi_{*}\phi^{*}\eta\phi_{*}\right)_{\mathscr{N}}\right)_a\left(\sum_{b \in Ob(\mathscr{R})} \left(\sum_i \mathscr{N}\left(f_{i}^{ab}\right)(n) \otimes g_{i}^{ab}\right)\right)\\
     \Leftrightarrow\text{ }& \left(\delta_{(\mathscr{N} \circ \phi) \otimes_{\mathscr{R}} {\mathscr{S}}\left(\_\_, \phi(\_\_)\right)}\right)_a\left(\sum_{b \in Ob(\mathscr{R})} \left(\sum_i \mathscr{N}\left(f_{i}^{ab}\right)(n) \otimes g_{i}^{ab}\right)\right)\\ &\text{ }\qquad= \left(\eta_{\mathscr{N} \circ \phi} \otimes_{\mathscr{R}} 1_{{\mathscr{S}}\left(\xi(a), \phi(\_\_)\right)}\right)\left(\sum_{b \in Ob(\mathscr{R})} \left(\sum_i \mathscr{N}\left(f_{i}^{ab}\right)(n) \otimes g_{i}^{ab}\right)\right)\\
     \Leftrightarrow\text{ }& \sum_{d \in Ob(\mathscr{R})} \left(\sum_k \left((\mathscr{N} \circ \phi) \otimes_{\mathscr{R}} {\mathscr{S}}\left(\_\_, \phi(\_\_)\right)\right)\left(f_{k}^{ad}\right)\left(\sum_{b \in Ob(\mathscr{R})} \left(\sum_i \mathscr{N}\left(f_{i}^{ab}\right)(n) \otimes g_{i}^{ab}\right)\right) \otimes g_{k}^{ad}\right)\\ &\text{ }\qquad= \sum_{b \in Ob(\mathscr{R})} \left(\sum_i \left(\mathscr{N}\left(f_{i}^{ab}\right)(n) \otimes 1_{\phi\left(b\right)}\right) \otimes g_{i}^{ab}\right)\\
     \Leftrightarrow\text{ }& \sum_{d \in Ob(\mathscr{R})} \left(\sum_k \left(\sum_{b \in Ob(\mathscr{R})} \left(\sum_i \mathscr{N}\left(f_{i}^{ab}\right)(n) \otimes \left(g_{i}^{ab} \circ f_{k}^{ad}\right)\right)\right) \otimes g_{k}^{ad}\right)\\ &\text{ }\qquad= \sum_{b \in Ob(\mathscr{R})} \left(\sum_i \left(\mathscr{N}\left(f_{i}^{ab}\right)(n) \otimes 1_{\phi\left(b\right)}\right) \otimes g_{i}^{ab}\right)\\
     \Leftrightarrow\text{ }& \sum_{d \in Ob(\mathscr{R})} \left(\sum_k \left(\sum_{b \in Ob(\mathscr{R})} \left(\sum_i f_{i}^{ab} \otimes \left(g_{i}^{ab} \circ f_{k}^{ad}\right)\right)\right)\otimes g_{k}^{ad}\right) = \sum_{b \in Ob(\mathscr{R})} \left(\sum_i \left(f_{i}^{ab} \otimes 1_{\phi\left(b\right)}\right) \otimes g_{i}^{ab}\right)
  \end{array}
\end{equation*}
The last line follows by taking $\mathscr{N} = H_{\xi(a)}$ and $n = 1_{\xi(a)}$. The result is now clear.
\end{proof}

We now assume that the preadditive categories $\mathcal{R}, \mathcal{S}, \mathcal{Q}$ and $\mathcal{T}$ each have a single object, i.e.,  they are rings $R$, $S$, $Q$ and $T$ respectively. It follows that the morphisms $Q \xrightarrow{\psi} R \xrightarrow{\phi} S \xleftarrow{\xi} T$ are ring homomorphisms. 
\begin{cor}\label{ind}
   $\phi^{*}$ is heavily $\psi_{*}$-separable if and only if there exists a morphism $\alpha: S \longrightarrow R$ of right $Q$-modules such that : (i) $\alpha \circ \phi = 1_{R}$ and (ii) $\alpha(s_1 \cdot \alpha(s_2)) = \alpha(s_1 s_2)$ for all $s_1, s_2 \in S$.
 \end{cor}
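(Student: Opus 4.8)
The plan is to obtain this as a direct specialization of Theorem \ref{ext} to the one-object case, followed by a simplification of the two bimodule conditions into the stated ring-theoretic identities. First I would record the relevant identifications. With $\mathscr{R},\mathscr{S},\mathscr{Q}$ replaced by the rings $R,S,Q$, the $(\mathscr{R},\mathscr{Q})$-bimodule $\mathscr{R}(\psi(\_\_),\_\_)$ is $R$ itself, regarded as an $(R,Q)$-bimodule with $R$ acting on the left by multiplication and $Q$ acting on the right through $\psi$, so that $x\cdot q=x\psi(q)$. Similarly $\mathscr{S}(\phi(\psi(\_\_)),\phi(\_\_))$ is $S$, regarded as an $(R,Q)$-bimodule with $r\cdot s=\phi(r)s$ and $s\cdot q=s\phi(\psi(q))$. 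Under these identifications the canonical map $\phi_{\psi(\_\_),\_\_}$ is precisely $\phi:R\to S$.

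With this dictionary I would read off the conditions of Theorem \ref{ext}. Condition (1), namely $\alpha\circ\phi_{\psi(\_\_),\_\_}=id$, becomes $\alpha\circ\phi=1_R$, which is condition (i). Writing the two composable elements of $S$ in condition (2) as $g=s_1$ and $f=s_2$, and using the induced right $R$-action $s_1\cdot\alpha(s_2)=s_1\phi(\alpha(s_2))$, condition (2) reads $\alpha(s_1s_2)=\alpha\bigl(s_1\cdot\alpha(s_2)\bigr)$, which is condition (ii). Thus, after identification, an $(R,Q)$-bimodule morphism $\alpha:S\to R$ satisfies (1) and (2) of Theorem \ref{ext} exactly when it satisfies (i) and (ii).

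The one genuinely substantive point, and the step I expect to be the main obstacle, is reconciling the module-theoretic hypotheses: Theorem \ref{ext} produces and requires $\alpha$ to be an $(R,Q)$-bimodule morphism, whereas the corollary asks only that $\alpha$ be right $Q$-linear. The claim is that for a right $Q$-linear map, conditions (i) and (ii) already force left $R$-linearity, so the two formulations coincide. I would verify this by a short computation: applying (ii) with $s_1=\phi(r)$ and using that $\phi$ is a ring homomorphism gives $\alpha(\phi(r)s)=\alpha\bigl(\phi(r)\phi(\alpha(s))\bigr)=\alpha\bigl(\phi(r\alpha(s))\bigr)$, and then (i) collapses the right-hand side to $r\alpha(s)$. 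Hence $\alpha(\phi(r)s)=r\alpha(s)$, which is exactly left $R$-linearity for the action $r\cdot s=\phi(r)s$.

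With this observation the equivalence follows in both directions. For the forward implication, heavy $\psi_{*}$-separability of $\phi^{*}$ yields by Theorem \ref{ext} an $(R,Q)$-bimodule morphism $\alpha$ satisfying (1) and (2); it is in particular right $Q$-linear and, under the identification above, satisfies (i) and (ii). Conversely, given a right $Q$-linear $\alpha$ satisfying (i) and (ii), the computation of the previous paragraph upgrades it to an $(R,Q)$-bimodule morphism satisfying conditions (1) and (2) of Theorem \ref{ext}, whence $\phi^{*}$ is heavily $\psi_{*}$-separable.
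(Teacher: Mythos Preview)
Your proof is correct and follows essentially the same approach as the paper: specialize Theorem \ref{ext} to the one-object case, identify conditions (1) and (2) with (i) and (ii), and then observe via the computation $\alpha(\phi(r)s)=\alpha(\phi(r)\cdot\alpha(s))=\alpha(\phi(r\alpha(s)))=r\alpha(s)$ that right $Q$-linearity together with (i) and (ii) already forces left $R$-linearity. The paper's proof is the same, only more tersely written.
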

\begin{proof}
   The $(R, Q)$-bimodule map $\phi_{\psi(\_\_),\_\_} : {\mathscr{R}}(\psi(\_\_),\_\_) \longrightarrow {\mathscr{S}}(\phi(\psi(\_\_)),\phi(\_\_))$ is simply $\phi : R \longrightarrow S$. By Theorem \ref{ext},  $\phi^{*}$ is heavily $\psi_{*}$-separable if and only if there is a morphism $\alpha : S \longrightarrow R$ of $(R, Q)$-bimodules such that $\alpha \circ \phi = 1_{R}$ and $\alpha(s_1s_2) = \alpha(s_1\phi(\alpha(s_2))) = \alpha(s_1.\alpha(s_2))$ for all $s_1, s_2 \in S$.

\smallskip
Also, if $\alpha : S \longrightarrow R$ is a morphism of right $Q$-modules satisfying (i) and (ii), we see that
\begin{equation}\label{rlin}
\alpha(r\cdot s) =\alpha (\phi(r)s) = \alpha(\phi(r)\cdot \alpha(s)) = \alpha(\phi(r)\phi(\alpha(s))) = \alpha(\phi(r\alpha(s))) = r\alpha(s)
\end{equation} for all $r \in R, s \in S$, i.e., it is also left $R$-linear. The result is now clear.
\end{proof}
\begin{cor}\label{ideal}
  $\phi^{*}$ is heavily $\psi_{*}$-separable if and only if there exists a morphism $\alpha : S \longrightarrow R$ of right $Q$-modules such that $\alpha\circ \phi = 1_R$ and $ker(\alpha)$ is a left ideal of $S$.
\end{cor}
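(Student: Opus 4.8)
The plan is to deduce this directly from Corollary \ref{ind}, which already characterizes heavy $\psi_{*}$-separability of $\phi^{*}$ by the existence of a right $Q$-linear map $\alpha : S \longrightarrow R$ satisfying (i) $\alpha \circ \phi = 1_R$ and (ii) $\alpha(s_1 \cdot \alpha(s_2)) = \alpha(s_1 s_2)$ for all $s_1, s_2 \in S$. Since the present statement demands the same map $\alpha$ subject to the same normalization $\alpha \circ \phi = 1_R$, it suffices to prove that, for any right $Q$-linear $\alpha$ with $\alpha \circ \phi = 1_R$, condition (ii) holds if and only if $\ker(\alpha)$ is a left ideal of $S$. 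Throughout I recall that $S$ carries its left $R$-action through $\phi$, so that $s_1 \cdot \alpha(s_2) = s_1 \phi(\alpha(s_2))$, an ordinary product in $S$.

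The key observation I would record first is that the splitting $\alpha \circ \phi = 1_R$ forces $\phi \circ \alpha$ to be idempotent and yields the description $\ker(\alpha) = \{\, s - \phi(\alpha(s)) : s \in S \,\} = \mathrm{im}(1_S - \phi\alpha)$. Indeed $\alpha\bigl(s - \phi(\alpha(s))\bigr) = \alpha(s) - \alpha(s) = 0$, while any $x \in \ker(\alpha)$ equals $x - \phi(\alpha(x))$. This identification lets me pass freely between elements of $\ker(\alpha)$ and the \emph{projected} elements $s_2 - \phi(\alpha(s_2))$, which is the bridge between condition (ii) and the left-ideal property.

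For the forward implication I would assume (ii) and take $x \in \ker(\alpha)$ and $s \in S$. Applying (ii) with $s_2 = x$ gives $\alpha(s x) = \alpha(s \cdot \alpha(x)) = \alpha(s \phi(0)) = 0$, so $s x \in \ker(\alpha)$; since $\ker(\alpha)$ is already an additive subgroup (and a right $Q$-submodule), this shows it is a left ideal of $S$. For the converse I would assume $\ker(\alpha)$ is a left ideal and fix $s_1, s_2 \in S$. The element $x := s_2 - \phi(\alpha(s_2))$ lies in $\ker(\alpha)$ by the description above, hence so does $s_1 x = s_1 s_2 - s_1 \cdot \alpha(s_2)$; applying $\alpha$ and rearranging then yields exactly $\alpha(s_1 s_2) = \alpha(s_1 \cdot \alpha(s_2))$, which is (ii).

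There is no serious obstacle here: the statement is essentially a bookkeeping reformulation of condition (ii) of Corollary \ref{ind}. The one point to handle carefully is the translation between the \emph{algebraic} multiplication in $S$ and the bimodule action through $\phi$ that appears in (ii); once $\ker(\alpha) = \mathrm{im}(1_S - \phi\alpha)$ is in hand, the equivalence $S \cdot \ker(\alpha) \subseteq \ker(\alpha) \Leftrightarrow (\mathrm{ii})$ is immediate. I note that the left $R$-linearity of $\alpha$ recorded in \eqref{rlin} is not actually needed for this particular equivalence, so the argument rests only on $\alpha \circ \phi = 1_R$ and the elementary idempotent computation above.
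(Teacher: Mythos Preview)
Your proof is correct and follows essentially the same approach as the paper: both reduce to Corollary~\ref{ind} and verify that, given a right $Q$-linear $\alpha$ with $\alpha\circ\phi=1_R$, condition (ii) is equivalent to $\ker(\alpha)$ being a left ideal, using $s_2-\phi(\alpha(s_2))\in\ker(\alpha)$ for the converse direction. Your additional observation that $\ker(\alpha)=\mathrm{im}(1_S-\phi\alpha)$ is a nice refinement, though the paper only uses the inclusion needed for the argument.
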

\begin{proof}
  If $\alpha : S \longrightarrow R$ satisfies (i) and (ii) in Corollary \ref{ind}, then for any $s_1 \in S, s_2 \in ker(\alpha)$,  $\alpha(s_1s_2) = \alpha(s_1\cdot
  \alpha(s_2)) = 0$ so that $ker(\alpha)$ is a left ideal of $S$.  Conversely, suppose that $\alpha : S \longrightarrow R$ is right $Q$-linear with $\alpha\circ \phi = 1_R$ and such that $ker(\alpha)$ is a left ideal of $S$. Then for any $s_1, s_2 \in S$, we note that
  \begin{equation}
  s_2 - \phi(\alpha(s_2))\in ker(\alpha)\Rightarrow s_1(s_2 - \phi(\alpha(s_2))) \in  ker(\alpha)\Rightarrow \alpha(s_1(s_2 - \phi(\alpha(s_2)))) = 0
  \end{equation} This proves the result.
\end{proof}
\begin{cor}\label{cor3.3}
    If $\psi : Q \longrightarrow R$ is an epimorphism of rings, then $\phi^{*}$ is heavily $\psi_{*}$-separable if and only if there is a ring homomorphism $\alpha : S \longrightarrow R$ satisfying $\alpha \circ \phi = 1_{R}$.
\end{cor}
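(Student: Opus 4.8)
The plan is to read the statement entirely through Corollary \ref{ind}, which already characterizes heavy $\psi_*$-separability of $\phi^*$ by the existence of a right $Q$-linear map $\alpha:S\longrightarrow R$ with (i) $\alpha\circ\phi=1_R$ and (ii) $\alpha(s_1\cdot\alpha(s_2))=\alpha(s_1 s_2)$, where $s_1\cdot\alpha(s_2)=s_1\,\phi(\alpha(s_2))$. So it suffices to prove that, when $\psi$ is a ring epimorphism, such an $\alpha$ is the same thing as a ring homomorphism $\alpha:S\longrightarrow R$ with $\alpha\circ\phi=1_R$. I would prove the two implications separately, and I expect only one of them to use that $\psi$ is an epimorphism.

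For the implication that does not need the hypothesis, suppose $\alpha:S\longrightarrow R$ is a ring homomorphism with $\alpha\circ\phi=1_R$. Since the right $Q$-action on $S$ is $s\cdot q=s\,\phi(\psi(q))$, multiplicativity together with $\alpha\circ\phi=1_R$ gives $\alpha(s\cdot q)=\alpha(s)\,\psi(q)=\alpha(s)\cdot q$, so $\alpha$ is right $Q$-linear; condition (i) is assumed, and condition (ii) reads $\alpha(s_1\phi(\alpha(s_2)))=\alpha(s_1)\,\alpha(s_2)=\alpha(s_1 s_2)$, again by multiplicativity and $\alpha\circ\phi=1_R$. By Corollary \ref{ind}, $\phi^*$ is then heavily $\psi_*$-separable. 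This uses nothing about $\psi$.

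Conversely, assume $\phi^*$ is heavily $\psi_*$-separable and let $\alpha:S\longrightarrow R$ be the right $Q$-linear map produced by Corollary \ref{ind}, satisfying (i) and (ii). It preserves the unit automatically, since $\alpha(1_S)=\alpha(\phi(1_R))=1_R$ by (i). For multiplicativity I would rewrite (ii) as $\alpha(s_1 s_2)=\alpha\big(s_1\,\phi(\alpha(s_2))\big)$; this collapses to $\alpha(s_1 s_2)=\alpha(s_1)\,\alpha(s_2)$ the moment one knows $\alpha$ is right $R$-linear, because then $\alpha\big(s_1\,\phi(\alpha(s_2))\big)=\alpha(s_1)\,\alpha(s_2)$. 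Thus the entire problem reduces to upgrading the right $Q$-linearity of $\alpha$ to right $R$-linearity, and this is exactly where the epimorphism hypothesis must enter.

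To carry out the upgrade I would invoke the classical fact that $\psi:Q\longrightarrow R$ is a ring epimorphism if and only if the restriction of scalars functor $\mathcal M_R\longrightarrow\mathcal M_Q$ is fully faithful (equivalently, the multiplication $R\otimes_Q R\longrightarrow R$ is an isomorphism). Viewing $S$ as a right $R$-module via $\phi$ and $R$ as a right module over itself, restriction along $\psi$ returns precisely the right $Q$-module structures appearing in Corollary \ref{ind}; since $\alpha$ is right $Q$-linear it is a morphism of the restrictions, so fullness forces $\alpha$ to be right $R$-linear. Combined with (ii) this yields $\alpha(s_1 s_2)=\alpha(s_1)\alpha(s_2)$, completing the proof. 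The main, and indeed only, obstacle is this last step: right $Q$-linearity alone never gives multiplicativity, so the epimorphism is genuinely needed. If a self-contained argument is preferred over citing full faithfulness, I would prove it by hand from $R\otimes_Q R\cong R$: for any right $R$-module $M$ one has $m\otimes r=mr\otimes 1$ in $M\otimes_Q R$, and feeding this identity through the unit map $m\mapsto m\otimes 1$ and the action map $m\otimes r\mapsto mr$ turns any right $Q$-linear map between right $R$-modules into a right $R$-linear one.
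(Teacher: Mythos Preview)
Your argument is correct, but it takes a different route from the paper's. The paper proceeds at the categorical level: since $\psi$ is a ring epimorphism, $\psi_*$ is fully faithful, and then Lemma~\ref{L2.2f} gives directly that heavy $\psi_*$-separability of $\phi^*$ coincides with ordinary heavy separability of $\phi^*$; the latter is equivalent to $\phi$ being a split ring monomorphism by \cite[Proposition 3.1]{Ard1}. In contrast, you stay inside the explicit description of Corollary~\ref{ind} and use the full faithfulness of $\psi_*$ at the module level, to promote the right $Q$-linear retraction $\alpha$ to a right $R$-linear one, after which condition (ii) unwinds to multiplicativity. Both arguments hinge on the same fact (epimorphism $\Rightarrow$ $\psi_*$ fully faithful); the paper's version is shorter and leverages the general Lemma~\ref{L2.2f} together with the cited Ardizzoni--Menini characterization, while yours is self-contained relative to the results already proved in this section and makes transparent exactly where the epimorphism hypothesis enters.
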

\begin{proof} By \cite[Proposition 3.1]{Ard1}, $\phi^{*}$ is heavily separable if and only if it is a split monomorphism of rings. 
   Since $\psi$ is an epimorphism of rings, the restriction of scalars $\psi_{*}: \mathcal{M}_R \longrightarrow \mathcal{M}_Q$ is fully faithful (see \cite[Proposition XI.1.2]{Sten}). The result now follows 
   from Lemma \ref{L2.2f}. 
\end{proof}
\begin{rem} \end{rem}
\begin{enumerate}
\item We give an example to show that heavy separability of the second kind need not imply heavy separability. For this, we consider the ring map
$\phi : \mathbb{C} \longrightarrow M_2(\mathbb{R})$ given by $\phi(a+ib) = \begin{pmatrix}
     a & -b\\
     b & a
  \end{pmatrix}$ for $a+ib\in \mathbb C$. 
Since $M_2(\mathbb{R})$ is a simple ring, any unital ring homomorphism $M_2(\mathbb{R}) \longrightarrow \mathbb{C}$ would be injective, which is a contradiction since $\mathbb{C}$ is an integral domain while $M_2(\mathbb{R})$ is not. Thus, there are no unital ring homomorphisms of $M_2(\mathbb{R})$ into $\mathbb{C}$.  Using \cite[Proposition 3.1]{Ard1}, it follows that $\phi^*$ is not heavily separable.

\smallskip 
We now consider the inclusion $\mathbb{R} \xrightarrow{i} \mathbb{C}$. We note that the map $\alpha:M_2(\mathbb R)\longrightarrow \mathbb C$ given by $\alpha\begin{pmatrix}
             a & b\\
             c & d
  \end{pmatrix}=d-ib $ is right $\mathbb R$-linear and satisfies $\alpha\circ \phi=1_{\mathbb C}$. Further, the kernel of  $\alpha$ is $\left\{\begin{pmatrix}
            a & 0\\
            c & 0
 \end{pmatrix} : a,c \in \mathbb{R}\right\}$ which is a left ideal of $M_2(\mathbb R)$. It follows from Corollary \ref{ideal} that $\phi^*$ is heavily $i_*$-separable.

\item Let $R$ be a commutative ring and $S$ be an  $R$-algebra, given by the canonical ring map $\phi : R \longrightarrow S$ (so that $\phi(R) \subseteq Z(S)$). If $\phi^*$ is not heavily separable, then $\phi^*$ is not heavily $\psi_*$-separable for any ring map $\psi : Q \longrightarrow R$.  Otherwise  there exists a right $Q$-linear map $\alpha : S \longrightarrow R$ satisfying conditions (i) and (ii) of Corollary \ref{ind}. From \eqref{rlin}, we see that $\alpha$ is also left $R$-linear.  Using the fact that $R$ is commutative and $\phi(R)\subseteq Z(S)$, we have for any   $s_1, s_2 \in S$ 
\begin{equation*}\alpha(s_1s_2) = \alpha(s_1\cdot \alpha(s_2)) = \alpha(s_1\phi(\alpha(s_2))) = \alpha(\phi(\alpha(s_2))s_1) = \alpha(\alpha(s_2)\cdot s_1) = \alpha(s_2)\alpha(s_1) = \alpha(s_1)\alpha(s_2)\end{equation*} Then $\alpha$ is a ring homomorphism satisfying $\alpha \circ \phi = 1_{R}$. It follows that  $\phi^{*}$ is heavily separable, which is a contradiction. This applies in particular to the inclusion $i : \mathbb R \hookrightarrow \mathbb C$, since $i$ is not left invertible in the category of rings (as there are no unital ring maps from $\mathbb{C}$ to $\mathbb{R}$). 
\end{enumerate}

\smallskip
We finally consider the heavy $\xi_{*}$-separability of the restriction of scalars $\phi_{*}$ in the context of rings.
\begin{cor}
  $\phi_{*}$ is heavily $\xi_{*}$-separable if and only if there exists an element $\sum_i a_i \otimes b_i \in S\otimes_{R} S$ such that
  \begin{equation}\label{Eq1}
     \sum_i ta_i \otimes b_i = \sum_i a_i \otimes b_i t,\quad\text{for all }t \in T
  \end{equation}
  \begin{equation}\label{Eq2}
     \sum_i a_ib_i = 1
  \end{equation}
  \begin{equation}\label{Eq3}
     \sum_{i, k} a_i \otimes b_i a_k \otimes b_k = \sum_i a_i \otimes 1 \otimes b_i
  \end{equation}
\end{cor}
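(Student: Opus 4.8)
The plan is to specialize Theorem \ref{res} to the one-object situation. Since $\mathscr{R}, \mathscr{S}, \mathscr{Q}, \mathscr{T}$ are now rings, each has a single object, so every indexing sum over $Ob(\mathscr{R})$ in Theorem \ref{res} collapses to a single term and the collection $\Gamma = \{\Gamma_a : a \in \mathscr{T}\}$ reduces to a single datum. First I would record the relevant identifications: writing $\ast$ for the unique object of each category, the hom-set $\mathscr{S}(\phi(\ast), \xi(\ast))$ is just $S$ regarded as a right $R$-module (the $R$-action being restriction of scalars along $\phi$), while $\mathscr{S}(\xi(\ast), \phi(\ast)) = S$ as a left $R$-module. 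Under these identifications the datum $\Gamma_{\ast} = \sum_i f_i \otimes g_i$ becomes an element $\sum_i a_i \otimes b_i$ of $S \otimes_R S$, where $a_i := f_i$ and $b_i := g_i$.

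Next I would translate each of the three conditions of Theorem \ref{res}. Composition of morphisms in the one-object category is ring multiplication, and $\xi(t)$ for $t \in \mathscr{T}(\ast, \ast) = T$ is the image of $t$ in $S$, which the statement abbreviates to $t$. With these conventions, condition \eqref{cond1} reads $\sum_i (\xi(t) f_i) \otimes g_i = \sum_i f_i \otimes (g_i \xi(t))$, which is exactly \eqref{Eq1}; condition \eqref{cond2} reads $\sum_i f_i g_i = 1_{\xi(\ast)} = 1$, which is \eqref{Eq2}; and condition \eqref{cond3}, after dropping the (single-term) sums over $b, d \in Ob(\mathscr{R})$ and using $1_{\phi(\ast)} = 1$, reads $\sum_{i,k} a_i \otimes (b_i a_k) \otimes b_k = \sum_i a_i \otimes 1 \otimes b_i$ in $S \otimes_R S \otimes_R S$, which is \eqref{Eq3}.

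Since \eqref{cond1}, \eqref{cond2} and \eqref{cond3} are precisely the conditions characterizing heavy $\xi_{*}$-separability of $\phi_{*}$ in Theorem \ref{res}, the asserted equivalence with the existence of $\sum_i a_i \otimes b_i \in S \otimes_R S$ satisfying \eqref{Eq1}--\eqref{Eq3} follows immediately. The only point requiring care---rather than a genuine obstacle---is keeping the composition conventions straight, i.e.\ checking that $\xi(t) \circ f_i$ corresponds to the left multiplication $t a_i$ and $g_i \circ \xi(t)$ to the right multiplication $b_i t$, and likewise that $g_i \circ f_k$ becomes $b_i a_k$ in the translation of \eqref{cond3}; once the module structures on $S$ induced by $\phi$ and $\xi$ are fixed, these identifications are forced.
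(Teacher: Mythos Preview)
Your proposal is correct and follows essentially the same approach as the paper: both specialize Theorem \ref{res} to the one-object case, identifying $\mathscr{S}(\phi(\_\_),\xi(a)) \otimes_{\mathscr R} \mathscr{S}(\xi(a),\phi(\_\_))$ with $S \otimes_R S$ and translating conditions \eqref{cond1}--\eqref{cond3} into \eqref{Eq1}--\eqref{Eq3}. Your write-up is just more explicit about the composition conventions.
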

\begin{proof}
Since $T, R$ and $S$ are one-object categories, by Theorem \ref{res}, $\phi_{*}$ is heavily $\xi_{*}$-separable if and only if there is an element $\sum_i a_i \otimes b_i \in {S}\left(\phi(\_\_), \xi(a)\right) \otimes_{R} {S}\left(\xi(a), \phi(\_\_)\right) = S \otimes_{R} S$ satisfying conditions \eqref{Eq1}, \eqref{Eq2} and such that
$$\sum_{i, k} a_i \otimes b_ia_k \otimes b_k = \sum_k\left(\sum_i\left(a_i \otimes b_ia_k\right)\otimes b_k\right) = \sum_i a_i \otimes 1 \otimes b_i$$
The result is now clear.
\end{proof}

\section{Free functors associated to monads and comonads}
Suppose that $(F:\mathcal C\longrightarrow \mathcal D, G:\mathcal D\longrightarrow \mathcal C)$ is an adjunction with unit $\eta: 1_{\mathcal C}\longrightarrow GF$ and
counit $\varepsilon: FG\longrightarrow 1_{\mathcal D}$. We recall that the monad defined by the adjunction is the triple $\mathbf{T} = (GF, G\varepsilon F, \eta)$ where $G\varepsilon F : GFGF \longrightarrow GF$ is the multiplication and $\eta$ is the unit.  Dually, the adjunction gives a comonad $\mathbf{S} = (FG, F\eta G, \varepsilon)$ where $F\eta G : FG \longrightarrow FGFG$ is the comultiplication and $\varepsilon$ is the counit. Given a monad $\mathbf{T}$ on $\mathcal{C}$, any adjunction that determines the monad $\mathbf T$ is known as a  $\mathbf{T}$-adjunction (see \cite[$\S$ 3]{Mes}).  Similarly, one talks about $\mathbf{S}$-adjunctions where $\mathbf{S}$ is a comonad.  

\smallskip
\begin{thm}\label{C2.5}
Suppose that $(F:\mathcal C\longrightarrow \mathcal D, G:\mathcal D\longrightarrow \mathcal C)$ is an adjoint pair, having unit 
$\eta$ and counit $\varepsilon$. Let $I : \mathcal{C} \longrightarrow \mathcal{X}$ be a functor. Let $\mathbf{T}$ be the monad defined by the adjunction. Then, the following are equivalent :

\smallskip
(1) The functor $F$ is heavily $I$-separable

\smallskip
(2) For any $\mathbf{T}$-adjunction $(F':\mathcal C\longrightarrow \mathcal D', G':\mathcal D'\longrightarrow \mathcal C)$,
the functor $F'$ is heavily $I$-separable.
\end{thm}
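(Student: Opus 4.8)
The plan is to reduce each of (1) and (2) to the intrinsic criterion for heavy $I$-separability furnished by Theorem \ref{RTT}(1), and then to observe that this criterion refers to the adjunction only through its associated monad. The implication $(2) \Rightarrow (1)$ requires no work: the pair $(F, G)$ is itself a $\mathbf{T}$-adjunction, so applying the hypothesis (2) to this particular member of the family yields (1).

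For $(1) \Rightarrow (2)$, I would first invoke Theorem \ref{RTT}(1) to rephrase the heavy $I$-separability of $F$ as the existence of a natural transformation $\gamma : IGF \longrightarrow I$ satisfying $\gamma \circ I\eta = id$ and $\gamma \circ (\gamma GF) = \gamma \circ (IG\varepsilon F)$. The key observation is that all the data appearing in these conditions, namely the endofunctor $GF$, the unit $\eta$, and the natural transformation $G\varepsilon F$, are precisely the components of the monad $\mathbf{T} = (GF, G\varepsilon F, \eta)$; the conditions make no reference to $\mathcal D$, $F$, $G$, or $\varepsilon$ beyond this monad structure. Now let $(F' : \mathcal C \longrightarrow \mathcal D', G' : \mathcal D' \longrightarrow \mathcal C)$ be an arbitrary $\mathbf{T}$-adjunction, with unit $\eta'$ and counit $\varepsilon'$. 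By the very definition of a $\mathbf{T}$-adjunction, its associated monad $(G'F', G'\varepsilon' F', \eta')$ coincides with $\mathbf{T}$, so that $G'F' = GF$, $\eta' = \eta$, and $G'\varepsilon' F' = G\varepsilon F$ as natural transformations. Consequently the same $\gamma : IG'F' = IGF \longrightarrow I$ automatically satisfies $\gamma \circ I\eta' = id$ and $\gamma \circ (\gamma G'F') = \gamma \circ (IG'\varepsilon' F')$. Applying Theorem \ref{RTT}(1) this time to the adjunction $(F', G')$ shows that $F'$ is heavily $I$-separable, which is exactly (2).

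I do not expect a genuine analytic obstacle here; the entire content of the statement is the remark that the Rafael-type characterization of Theorem \ref{RTT}(1) is a property of the monad $\mathbf{T}$ rather than of any particular adjunction realizing it. The only point demanding care is the bookkeeping of the identifications $G'F' = GF$, $\eta' = \eta$, and $G'\varepsilon' F' = G\varepsilon F$ built into the notion of a $\mathbf{T}$-adjunction, since it is these identifications that let a single $\gamma$ simultaneously witness the heavy $I$-separability of every left adjoint $F'$ in the family. The dual statement for comonads and $\mathbf{S}$-adjunctions would follow verbatim from Theorem \ref{RTT}(2).
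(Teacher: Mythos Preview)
Your proposal is correct and follows essentially the same approach as the paper: both reduce heavy $I$-separability of $F$ via Theorem \ref{RTT}(1) to the existence of $\gamma : IGF \longrightarrow I$ satisfying $\gamma \circ I\eta = id$ and $\gamma \circ (\gamma GF) = \gamma \circ (IG\varepsilon F)$, observe that these conditions depend only on the monad data $(GF, G\varepsilon F, \eta)$, and then transport the same $\gamma$ to any other $\mathbf{T}$-adjunction $(F',G')$ using the identifications $G'F' = GF$, $\eta' = \eta$, $G'\varepsilon'F' = G\varepsilon F$. The paper's proof is written a bit more tersely (it merely remarks that $(2)\Rightarrow(1)$ is trivial), but the substance is identical.
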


\begin{proof}
We only need to show that (1) $\Rightarrow$ (2).  By Theorem \ref{RTT}, there is a natural transformation $\gamma : IGF \longrightarrow I$ such that
$$\gamma \circ I\eta =id\text{ and }\gamma \circ (\gamma GF) = \gamma \circ (IG\varepsilon F)$$
Let $\eta '$ be the unit and $\varepsilon '$ be the counit of the adjunction $(F', G')$. Since $(F',G')$ is a $\mathbf{T}$-adjunction,  the monad $(G'F',G'\varepsilon ' F',\eta')$ deteremined by the adjunction is $\mathbf{T} = (GF, G\varepsilon F,\eta)$. Thus, the natural transformation $\gamma : IG'F' = IGF \longrightarrow I$ satisfies
$$\gamma \circ I\eta ' = \gamma \circ I\eta = id\text{ and }\gamma \circ (\gamma G'F') = \gamma \circ (\gamma GF) = \gamma \circ (IG\varepsilon F) = \gamma \circ (IG'\varepsilon ' F')$$ Hence, $F'$ is heavily $I$-separable. 
\end{proof}

   Let  $\mathbf{T} = (T, \mu,\iota)$ be a monad on a category $\mathcal{C}$. We recall that the Eilenberg-Moore category of $\mathbf{T}$-algebras is the category $\mathcal{C}_{\mathbf{T}}$ whose objects are pairs $(x, h : Tx \longrightarrow x)$ such that $h \circ T(h) = h \circ \mu_x$ and $h \circ \iota_x = 1_x$.  The forgetful functor $U_\mathbf{T} : \mathcal{C}_{\mathbf{T}} \longrightarrow \mathcal{C}$ is right adjoint to the free functor $F_{\mathbf{T}} : \mathcal{C} \longrightarrow \mathcal{C}_{\mathbf{T}}, x \mapsto (Tx, \mu_x)$. We note that the monad defined by the adjunction $(F_{\mathbf{T}}, U_{\mathbf{T}})$ is $\mathbf{T}$. 
Similarly, for a comonad $\mathbf S$, there is an Eilenberg-Moore category $\mathcal C^{\mathbf S}$ of $\mathbf S$-coalgebras, with cofree coalgebra functor 
   $F^{\mathbf S}:\mathcal C\longrightarrow \mathcal C^{\mathbf S}$ and forgetful functor $U^{\mathbf S}: \mathcal C^{\mathbf S}\longrightarrow \mathcal C$, which gives an adjunction
   $(U^{\mathbf S},F^{\mathbf S})$.

\begin{lem}\label{L3.1} (\cite[Proposition 2.1.]{KeSt}) Let $(F_1:\mathcal C\longrightarrow \mathcal D, G_1:\mathcal D\longrightarrow \mathcal C)$
 (resp.  $(F_2:\mathcal C\longrightarrow \mathcal D, G_2:\mathcal D\longrightarrow \mathcal C)$) be an adjoint pair with  unit $\eta_1$ (resp. 
 $\eta_2$) and counit  $\varepsilon_1$ (resp. $\varepsilon_2$).  Then, there is a bijection
\begin{equation*}
     [\mathcal{C}, \mathcal{D}](F_1, F_2) \xrightarrow{\sim} [\mathcal{D}, \mathcal{C}](G_2, G_1) \qquad \alpha \mapsto \overline{\alpha} : = G_1\varepsilon_2 \circ G_1\alpha G_2 \circ \eta_1 G_2
\end{equation*} with inverse
\begin{equation*}
      [\mathcal{D}, \mathcal{C}](G_2, G_1) \xrightarrow{\sim} [\mathcal{C}, \mathcal{D}](F_1, F_2) \qquad \beta \mapsto \widehat{\beta} := \varepsilon_1 F_2 \circ F_1\beta F_2 \circ F_1\eta_2
\end{equation*}
Further, if $(F:\mathcal D\longrightarrow \mathcal E, G:\mathcal E\longrightarrow \mathcal D)$ is an adjoint pair, then for any natural transformation $\alpha : F_1 \longrightarrow F_2$, we have $\overline{F\alpha} = \overline{\alpha}G$. 
                           Similarly, if $(F':\mathcal B\longrightarrow \mathcal C,G':\mathcal C\longrightarrow \mathcal B)$ is an adjoint pair, then for any natural transformation $\alpha': F_1 \longrightarrow F_2$, we have
                           $\overline{\alpha' F'} = G'\overline{\alpha'}$.
\end{lem}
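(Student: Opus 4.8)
This is the classical calculus of \emph{mates} (conjugate natural transformations) attached to the two adjunctions $F_1 \dashv G_1$ and $F_2 \dashv G_2$, so the plan is to first establish the core bijection and then deduce the two whiskering compatibilities. For the core bijection there are two natural routes. The conceptual one is via transposition: for each pair $(c,d)$ the adjunction $(F_i,G_i)$ gives an isomorphism $\varphi_i \colon \mathcal{D}(F_i c, d) \xrightarrow{\sim} \mathcal{C}(c, G_i d)$ natural in $(c,d)$. By Yoneda, a natural transformation $\alpha \colon F_1 \to F_2$ is the same datum as a family $\mathcal{D}(F_2 c, d) \to \mathcal{D}(F_1 c, d)$ natural in $(c,d)$ (precomposition by $\alpha_c$), and likewise $\beta \colon G_2 \to G_1$ is the same as a natural family $\mathcal{C}(c, G_2 d) \to \mathcal{C}(c, G_1 d)$ (postcomposition by $\beta_d$); transporting along $\varphi_1,\varphi_2$ matches these two kinds of families, which yields the bijection abstractly, and unwinding the transposes recovers the stated formulas for $\overline\alpha$ and $\widehat\beta$.

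The self-contained alternative, which I would actually write out, is to take the displayed formulas as the definitions, observe that $\overline\alpha$ and $\widehat\beta$ are natural (immediate from naturality of $\eta_i,\varepsilon_i,\alpha,\beta$), and check directly that the two assignments are mutually inverse. To see $\widehat{\overline\alpha} = \alpha$, substitute $\overline\alpha$ into $\widehat\beta$ to obtain $\varepsilon_1 F_2 \circ F_1 G_1 \varepsilon_2 F_2 \circ F_1 G_1 \alpha G_2 F_2 \circ F_1 \eta_1 G_2 F_2 \circ F_1 \eta_2$. Then collapse the first three factors by naturality of $\varepsilon_1$ applied to $\varepsilon_2 F_2 \circ \alpha G_2 F_2$, cancel $\varepsilon_1 F_1 \circ F_1 \eta_1 = 1_{F_1}$, use naturality of $\alpha$ against $\eta_2$ to rewrite $\alpha G_2 F_2 \circ F_1 \eta_2 = F_2 \eta_2 \circ \alpha$, and finish with the triangle identity $\varepsilon_2 F_2 \circ F_2 \eta_2 = 1_{F_2}$; what remains is $\alpha$. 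The identity $\overline{\widehat\beta} = \beta$ follows by the same pattern with the roles of the $F$'s and $G$'s and of the indices $1,2$ interchanged.

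For the first whiskering identity $\overline{F\alpha} = \overline\alpha G$, let $F \dashv G$ have unit $\eta^{F}$ and counit $\varepsilon^{F}$. I would record that $FF_i \dashv G_i G$ has unit $G_1 \eta^{F} F_1 \circ \eta_1$ and counit $\varepsilon^{F} \circ F\varepsilon_2 G$, substitute these into the mate formula, factor $G_1(\,\cdot\,)G$ out of the middle block, apply naturality of $\eta^{F}$ against $\varepsilon_2 \circ \alpha G_2$ to collapse that block to $\eta^{F}\circ \varepsilon_2 \circ \alpha G_2$, and then cancel the remaining $\eta^{F},\varepsilon^{F}$ by the triangle identity $G\varepsilon^{F}\circ \eta^{F}G = 1_{G}$; the leftover is exactly $\overline\alpha G$. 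Dually, for $\overline{\alpha' F'} = G'\overline{\alpha'}$ with $F' \dashv G'$ having unit $\eta'$ and counit $\varepsilon'$, I would expand $F_i F' \dashv G' G_i$, use the interchange law to replace $F_2 \varepsilon' \circ \alpha' F' G'$ by $\alpha' \circ F_1 \varepsilon'$, slide $\varepsilon'$ past $\eta_1$ using naturality of $\eta_1$, and cancel via $G'\varepsilon' \circ \eta' G' = 1_{G'}$, which leaves $G'\overline{\alpha'}$.

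No single step here is deep: all the content is naturality together with the two triangle identities. The genuine difficulty is therefore purely organizational, namely keeping the horizontal and vertical whiskerings straight and applying each naturality square and each triangle identity at precisely the right place and in the right order; string-diagram notation makes every one of these cancellations visually transparent and is how I would verify the bookkeeping. If one prefers the abstract route instead, the single non-formal point to confirm is that transporting the precomposition families across $\varphi_1,\varphi_2$ produces exactly the stated composite formulas for $\overline\alpha$ and $\widehat\beta$, rather than merely some abstract bijection.
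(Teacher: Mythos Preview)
Your proof is correct and follows the standard calculus-of-mates argument; there is nothing to fault in the plan or the individual steps. Note, however, that the paper itself does not supply a proof of this lemma: it is stated with a citation to \cite[Proposition 2.1]{KeSt} and used as a black box, so there is no in-paper argument to compare against. What you have written is precisely the kind of direct verification one would give if asked to unpack that citation, and it matches the classical treatment (Kelly--Street) in both structure and level of detail.
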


Given an adjunction $(L,R)$ with endofunctors $L$, $R$ on $\mathcal C$, we know from \cite[Section 3]{EMo} that $L$ can be equipped with a comonad structure $\mathbf L=(L,\Delta,\epsilon)$ if and only if $R$ can be equipped with a monad structure $\mathbf R=(R,\mu,\iota)$. In this setup, it has been shown in \cite[$\S$ 2.10]{BBW} that the free $\mathbf{L}$-coalgebra functor $F^{\mathbf{L}}$ is separable if and only if the free $\mathbf{R}$-algebra functor $F_{\mathbf{R}}$ is separable. We extend this result in the context of heavy separability of the second kind.

\smallskip
\begin{Thm}\label{T3.2}
Let $(L:\mathcal C\longrightarrow \mathcal C, R:\mathcal C\longrightarrow \mathcal C)$ be an adjoint pair of functors. Suppose that $\mathbf{L} := (L, \Delta, \epsilon)$ is a comonad with corresponding monad structure $\mathbf{R} := (R,\text{ }\mu := \overline{\Delta},\text{ }\iota := \overline{\epsilon})$ on $R$. Let $(I:\mathcal C\longrightarrow \mathcal C, J:\mathcal C\longrightarrow \mathcal C)$ be any adjoint pair such that
$I$ commutes with the comonad structure on $L$, i.e.
\begin{equation} IL = LI, \qquad I\Delta = \Delta I, \qquad I\epsilon = \epsilon I\end{equation}
Then,

\smallskip
(1) $F^{\mathbf{L}}$ is $I$-separable if and only if $F_{\mathbf{R}}$ is $J$-separable.

\smallskip
(2) If $I$ or $J$ is full, then $F^{\mathbf{L}}$ is heavily $I$-separable if and only if $F_{\mathbf{R}}$ is heavily $J$-separable.

\smallskip
(3) Let $(P : \mathcal{C} \longrightarrow \mathcal{A}, Q' : \mathcal{A} \longrightarrow \mathcal{C})$ be an adjunction with associated monad  $\mathbf{R}$ and let $(P' : \mathcal{B} \longrightarrow \mathcal{C}, Q : \mathcal{C} \longrightarrow \mathcal{B})$ be an adjunction with associated comonad $\mathbf{L}$. If $I$ or $J$ is full, then $Q$ is heavily $I$-separable if and only if $P$ is heavily $J$-separable. 
\end{Thm}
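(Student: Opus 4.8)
The plan is to reduce the two abstract adjunctions $(P,Q')$ and $(P',Q)$ to the canonical Eilenberg--Moore adjunctions and then invoke part (2). The guiding principle is that the heavy separability of the relevant adjoint depends only on the associated (co)monad together with $I$ (resp. $J$), and never on the particular adjunction realizing it. Concretely, by hypothesis $(P,Q')$ is an $\mathbf{R}$-adjunction and $(P',Q)$ is an $\mathbf{L}$-adjunction, so we may freely swap them for the free and cofree adjunctions $(F_{\mathbf{R}},U_{\mathbf{R}})$ and $(U^{\mathbf{L}},F^{\mathbf{L}})$.

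First I would handle the monad side. Both $(P,Q')$ and the free--forgetful adjunction $(F_{\mathbf{R}},U_{\mathbf{R}})$ have associated monad $\mathbf{R}$, and $P$, $F_{\mathbf{R}}$ are their respective left adjoints. Theorem \ref{C2.5} therefore applies verbatim and yields that $P$ is heavily $J$-separable if and only if $F_{\mathbf{R}}$ is heavily $J$-separable.

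Next I would handle the comonad side, for which I need the comonad-dual of Theorem \ref{C2.5}. By part (2) of the Rafael-type Theorem \ref{RTT}, the right adjoint $G$ of any $\mathbf{L}$-adjunction $(F,G)$ with $FG = L$ is heavily $I$-separable precisely when there is a natural transformation $\delta : I \longrightarrow IL$ with $I\epsilon \circ \delta = \mathrm{id}$ and $(\delta L)\circ \delta = (I\Delta)\circ \delta$. This criterion mentions only the comonad data $(L,\Delta,\epsilon)$ and the functor $I$, not the adjunction itself. Since both $(P',Q)$ and the cofree--forgetful adjunction $(U^{\mathbf{L}},F^{\mathbf{L}})$ are $\mathbf{L}$-adjunctions with right adjoints $Q$ and $F^{\mathbf{L}}$ respectively, the criterion is literally identical for the two. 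Hence $Q$ is heavily $I$-separable if and only if $F^{\mathbf{L}}$ is heavily $I$-separable.

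Finally I would chain these with part (2), which, under the hypothesis that $I$ or $J$ is full, gives that $F^{\mathbf{L}}$ is heavily $I$-separable if and only if $F_{\mathbf{R}}$ is heavily $J$-separable. Combining the three equivalences produces
\[
Q \text{ heavily } I\text{-sep.} \iff F^{\mathbf{L}} \text{ heavily } I\text{-sep.} \iff F_{\mathbf{R}} \text{ heavily } J\text{-sep.} \iff P \text{ heavily } J\text{-sep.},
\]
which is the assertion. I expect the only delicate point to be the precise statement and verification of the comonad-dual of Theorem \ref{C2.5}; but once the Rafael criterion for the right adjoint of an $\mathbf{L}$-adjunction is written out, its dependence on $(\mathbf{L},I)$ alone is immediate and mirrors the monad case exactly, so no genuinely new computation is required.
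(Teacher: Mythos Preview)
Your proposal is correct and follows essentially the same route as the paper's proof: apply Proposition~\ref{C2.5} on the monad side to identify heavy $J$-separability of $P$ with that of $F_{\mathbf{R}}$, apply the comonad-dual of Proposition~\ref{C2.5} (which you correctly justify via the Rafael criterion of Theorem~\ref{RTT}(2)) to identify heavy $I$-separability of $Q$ with that of $F^{\mathbf{L}}$, and then invoke part~(2) to bridge the two. The paper does exactly this in three sentences.
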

\begin{proof}
 Let $U^{\mathbf{L}} : \mathcal{C}^{\mathbf{L}} \longrightarrow \mathcal{C}$ and $U_{\mathbf{R}} : \mathcal{C}_{\mathbf{R}} \longrightarrow \mathcal{C}$ be the forgetful functors. Let $(\eta^{\mathbf{L}}, \varepsilon^{\mathbf{L}})$ and $(\eta_{\mathbf{R}}, \varepsilon_{\mathbf{R}})$ be the (unit, counit) of the adjunctions $(U^{\mathbf{L}}, F^{\mathbf{L}})$ and $(F_{\mathbf{R}}, U_{\mathbf{R}})$, respectively. The comonad determined by the adjunction $(U^{\mathbf{L}}, F^{\mathbf{L}})$ is $(U^{\mathbf{L}} F^{\mathbf{L}}, U^{\mathbf{L}}\eta^{\mathbf{L}} F^{\mathbf{L}}, \varepsilon^{\mathbf{L}}) = (L, \Delta, \epsilon)=\mathbf L$. The monad determined by the adjunction $(F_{\mathbf{R}}, U_{\mathbf{R}})$ is $(U_{\mathbf{R}} F_{\mathbf{R}}, U_{\mathbf{R}}\varepsilon_{\mathbf{R}} F_{\mathbf{R}}, \eta_{\mathbf{R}}) = (R, \mu, \iota)=\mathbf R$. 
 
 \smallskip
(1) Since $I$ commutes with $L$, $\Delta$ and $\epsilon$, it follows that $J$ commutes with $R$, $\mu$ and $\iota$. By Lemma \ref{L3.1}, there is a one-one correspondence between natural transformations $\delta: I \longrightarrow IU^{\mathbf{L}} F^{\mathbf{L}} = IL = LI$ such that $I\epsilon \circ \delta = I\varepsilon^{\mathbf{L}} \circ \delta = id$ and natural transformations $\gamma =\overline\delta : JR = JU_{\mathbf{R}} F_{\mathbf{R}} \longrightarrow J$ such that $\gamma \circ  \iota J=\gamma \circ J \iota = \gamma \circ J\eta_{\mathbf{R}} = id$.  The result now follows from \cite[Theorem 2.7]{CM}.   

\smallskip
(2) Suppose for instance that $I$ is full. By Theorem \ref{RTT}, $F^{\mathbf{L}}$ is heavily $I$-separable if and only if there exists $\delta : I \longrightarrow I U^{\mathbf{L}} F^{\mathbf{L}}$ such that $I\varepsilon^{\mathbf{L}} \circ \delta = id$ and $I U^{\mathbf{L}}\eta^{\mathbf{L}} F^{\mathbf{L}} \circ \delta = \delta U^{\mathbf{L}} F^{\mathbf{L}} \circ \delta$. Similarly, $F_{\mathbf{R}}$ is heavily $J$-separable if and only if there exists $\gamma : J U_{\mathbf{R}} F_{\mathbf{R}} \longrightarrow J$ such that $\gamma \circ J\eta_{\mathbf{R}} = id$ and $\gamma \circ \gamma U_{\mathbf{R}} F_{\mathbf{R}} = \gamma \circ J U_{\mathbf{R}} \varepsilon_{\mathbf{R}} F_{\mathbf{R}}$. We set $\gamma:=\overline\delta$. Proceeding as in  the proof of (1),   it remains to show that
\begin{equation}
   I U^{\mathbf{L}}\eta^{\mathbf{L}} F^{\mathbf{L}} \circ \delta =I\Delta \circ \delta = \delta L\circ \delta =\delta U^{\mathbf{L}} F^{\mathbf{L}} \circ \delta\qquad\Leftrightarrow \qquad \gamma \circ \gamma U_{\mathbf{R}} F_{\mathbf{R}} = \gamma\circ \gamma R=\gamma\circ J\mu =\gamma \circ J U_{\mathbf{R}} \varepsilon_{\mathbf{R}} F_{\mathbf{R}}
\end{equation}We first show that $L\delta \circ \delta = \delta L \circ \delta$ (this makes sense since $IL = LI$ by assumption). Let $a \in \mathcal{C}$. Since $I$ is full, we may 
choose $f\in \mathcal C(a,La)$ such that $I(f)=\delta_a:Ia\longrightarrow ILa$.   By the naturality of $\delta : I \longrightarrow IL = LI$ applied to the morphism $f : a \longrightarrow La$, it follows that $LI(f) \circ \delta_a = \delta_{La} \circ I(f)$, i.e.,  $L(\delta_a) \circ \delta_a = (\delta L)_a \circ \delta_a$. Since $a$ is an arbitrary object in $\mathcal{C}$, hence $L\delta \circ \delta = \delta L \circ \delta$.
\smallskip

Now, suppose that $I\Delta \circ \delta = \delta L\circ \delta $. Hence, $\Delta I\circ \delta= I\Delta \circ \delta= \delta L\circ \delta  = L \delta \circ \delta$. By Lemma \ref{L3.1}, we now have
\begin{equation*}
   \gamma \circ J \mu  = \overline{\delta} \circ \overline{\Delta I} = \overline{\Delta I \circ \delta} = \overline{L \delta \circ \delta} = \overline{\delta} \circ \overline{L \delta} = \gamma \circ \gamma R
\end{equation*}
 The converse follows similarly.
 
 \smallskip
(3) By Proposition \ref{C2.5}, $P$ is heavily $J$-separable if and only if $F_{\mathbf{R}}$ is heavily $J$-separable. By (2), the heavy $J$-separability of $F_{\mathbf{R}}$ is equivalent to the heavy $I$-separability of $F^{\mathbf{L}}$. By the dual of Proposition \ref{C2.5}, the result follows. 
\end{proof}
\begin{cor}\label{C3.3} If $L \dashv R : \mathcal{C} \longrightarrow \mathcal{C}$ is an adjoint pair with $\mathbf{L} = (L, \Delta, \epsilon)$ being a comonad and $\mathbf{R} = (R, \mu, \iota)$ being the corresponding monad, then

\smallskip
(1) $F^{\mathbf{L}}$ is heavily separable if and only if $F_{\mathbf{R}}$ is heavily separable.

\smallskip
(2) For any adjunction $P \dashv Q : \mathcal{C} \longrightarrow \mathcal{C}$ with associated comonad  $\mathbf{L} = (L, \Delta, \epsilon)$ and monad $\mathbf{R} = (R, \mu, \iota)$, $Q$ is heavily separable if and only if $P$ is heavily separable.
\end{cor}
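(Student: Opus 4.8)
The plan is to obtain both statements as the single specialization $I = J = 1_{\mathcal{C}}$ of Theorem \ref{T3.2}. The first thing I would verify is that this choice satisfies the hypotheses of that theorem. The identity functor $1_{\mathcal{C}}$ is self-adjoint, with unit and counit both equal to the identity natural transformation, so $(I, J) = (1_{\mathcal{C}}, 1_{\mathcal{C}})$ is a legitimate adjoint pair. Moreover $1_{\mathcal{C}}$ commutes trivially with the comonad structure on $L$, since $1_{\mathcal{C}}L = L = L 1_{\mathcal{C}}$, $1_{\mathcal{C}}\Delta = \Delta = \Delta 1_{\mathcal{C}}$ and $1_{\mathcal{C}}\epsilon = \epsilon = \epsilon 1_{\mathcal{C}}$, so the commutation condition required in Theorem \ref{T3.2} holds. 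Finally, $1_{\mathcal{C}}$ is fully faithful, hence in particular full, so the clause ``if $I$ or $J$ is full'' appearing in parts (2) and (3) of Theorem \ref{T3.2} is automatically met.

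Next I would invoke the observation recorded immediately after Definition \ref{D2.1}: when $I = 1_{\mathcal{C}}$, heavy $I$-separability reduces exactly to the notion of heavy separability of Ardizzoni and Menini. Thus throughout the argument ``heavily $1_{\mathcal{C}}$-separable'' may be read simply as ``heavily separable'', which is what lets a statement about heavily $I$-separable functors descend to a statement about heavily separable functors.

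With these identifications in place, part (1) of the Corollary is precisely part (2) of Theorem \ref{T3.2} applied with $I = J = 1_{\mathcal{C}}$: it gives that $F^{\mathbf{L}}$ is heavily separable if and only if $F_{\mathbf{R}}$ is heavily separable. For part (2), I would set $\mathcal{A} = \mathcal{B} = \mathcal{C}$ and let the single adjunction $P \dashv Q : \mathcal{C} \longrightarrow \mathcal{C}$ play both roles in part (3) of Theorem \ref{T3.2}, taking $Q' = Q$ and $P' = P$. The adjunction $(P, Q)$ has associated monad $\mathbf{R} = QP$ and associated comonad $\mathbf{L} = PQ$, so part (3) of Theorem \ref{T3.2} with $I = J = 1_{\mathcal{C}}$ yields that $Q$ is heavily separable if and only if $P$ is heavily separable.

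Since everything reduces to a direct specialization of an already established result, there is no genuine obstacle here; the only point worth confirming carefully is that the identity functor simultaneously supplies the adjoint partner $J$, the required commutation with the comonad data $(L, \Delta, \epsilon)$, and the fullness needed to activate the relevant clauses of Theorem \ref{T3.2}. Once these are noted, both equivalences follow immediately.
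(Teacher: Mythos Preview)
Your proof is correct and takes exactly the same approach as the paper, which simply states that the result follows by taking $I = J = 1_{\mathcal{C}}$ in Theorem \ref{T3.2}. Your version is more explicit in checking that the identity functor satisfies the required hypotheses (adjoint pair, commutation with the comonad data, fullness), but the underlying argument is identical.
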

\begin{proof}
   The result is clear by taking $I = J = 1_{\mathcal{C}}$ in Theorem \ref{T3.2}.
\end{proof}
\smallskip
We now consider an application. Let $A$ be a ring and let $\mathcal M_A$ denote the category of right $A$-modules. We recall (see, for instance, \cite[$\S$ 17]{Wis2003}) that an $A$-coring $C = (C, \Delta_C : C \longrightarrow C \otimes_A C, \epsilon_C : C \longrightarrow A)$ is a comonoid in the monoidal category of $(A,A)$-bimodules. We consider the adjoint pair
\begin{equation}\label{coim}
\mathcal M_A\xleftrightarrows[\text{$\qquad  Hom_A(C,\_\_)  \qquad$}]{\text{$\qquad  \_\_\otimes_AC \qquad$}}\mathcal M_A
\end{equation}
It is clear that the left adjoint $\_\_ \otimes_A C$ carries the canonical structure of a comonad $\mathbf L$ on $\mathcal M_A$, with comultiplication and counit induced by $\Delta_C$ and 
$\epsilon_C$ respectively. By Lemma \ref{L3.1}, its right adjoint $Hom_A(C _, \_\_)$ carries the structure of a monad $\mathbf R$ with multiplication and unit given by 
\begin{equation}
\begin{array}{c}
   Hom_A(C _, Hom_A(C _, \_\_)) \xrightarrow{\sim} Hom_A(C \otimes_A C _, \_\_) \xrightarrow{Hom_A(\Delta_{C},\_\_)} Hom_A(C _, \_\_) \\  1_{\mathcal{M}_A} \xrightarrow{\sim} Hom_A(A _, \_\_) \xrightarrow{Hom_A(\epsilon_{C},\_\_)} Hom_A(C _, \_\_)\\
   \end{array}
\end{equation}
The Eilenberg-Moore category $\mathcal{M}_A^{\mathbf{L}}$ of  $\mathbf L$ is the category $\mathcal{M}^C$ of right-$C$-comodules. The free $\mathbf{L}$-coalgebra functor $F^{\mathbf{L}} : \mathcal{M}_A \longrightarrow \mathcal{M}_A^{\mathbf{L}} = \mathcal{M}^C$ is the induction functor $M \mapsto (M \otimes_A C,\text{ }1_M \otimes_A \Delta_C)$. We also recall (see for instance, 
\cite[$\S$ 4.4]{BBW}) that an object of the Eilenberg-Moore category $\mathcal{M}_{A,\mathbf{R}}$ of $\mathbf{R}$-algebras is called a $C$-contramodule. The free $\mathbf{R}$-algebra functor $F_{\mathbf{R}} : \mathcal{M}_A \longrightarrow \mathcal{M}_{A,\mathbf{R}}, M \mapsto (Hom_A(C, M), Hom_A(\Delta_{C},M))$ is called the free $C$-contramodule functor.
\begin{thm}\label{P3.4}
The following statements are equivalent :\\
(1) The free $C$-contramodule functor $F_{\mathbf{R}}$ is heavily separable.\\
(2) The induction functor $F^{\mathbf{L}}$ is heavily separable.\\
(3) The coring $C$ has an invariant grouplike element, that is, an element $x \in C$ such that $ax = xa$ for all $a \in A$ and such that $\epsilon_C(x) = 1$ and $\Delta_C(x) = x \otimes x$.
\end{thm}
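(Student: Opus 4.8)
The equivalence of (1) and (2) is immediate. By construction the adjoint pair \eqref{coim} consists of endofunctors on $\mathcal{M}_A$, with $\_\_\otimes_A C$ carrying the comonad $\mathbf{L}$ and $Hom_A(C,\_\_)$ the corresponding monad $\mathbf{R}$, while $F^{\mathbf{L}}$ (the induction functor) and $F_{\mathbf{R}}$ (the free contramodule functor) are the associated free $\mathbf{L}$-coalgebra and free $\mathbf{R}$-algebra functors. Hence Corollary \ref{C3.3}(1) gives (1) $\Leftrightarrow$ (2), and it suffices to establish (2) $\Leftrightarrow$ (3), which I would do by a direct analysis of the induction functor.

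To this end I would apply Theorem \ref{RTT}(2) to the adjunction $(U^{\mathbf{L}}, F^{\mathbf{L}})$ with $J = 1_{\mathcal{M}_A}$. Since $F^{\mathbf{L}}$ is the right adjoint, this says that $F^{\mathbf{L}}$ is heavily separable if and only if there is a natural transformation $\delta : 1_{\mathcal{M}_A} \longrightarrow U^{\mathbf{L}}F^{\mathbf{L}} = \_\_\otimes_A C$ satisfying $\epsilon \circ \delta = id$ and $\delta L \circ \delta = \Delta \circ \delta$, where $\epsilon$ and $\Delta$ denote the comonad counit and comultiplication of $\mathbf{L}$ induced by $\epsilon_C$ and $\Delta_C$.

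The crux is to identify such $\delta$ with invariant elements of $C$. Given $\delta$, I set $x := \delta_A(1)$, viewed in $C$ under the isomorphism $A \otimes_A C \cong C$. Applying naturality of $\delta$ to the right $A$-linear maps $\rho_m : A \longrightarrow M$, $a \mapsto ma$, pins down $\delta_M(m) = m \otimes x$ for all $M$ and all $m \in M$; conversely each $x \in C$ is a candidate for $\delta$ via this formula. The one subtle point---and the reason the word ``invariant'' appears in (3)---is that every component $\delta_M$ must be a morphism in $\mathcal{M}_A$: imposing $A$-linearity on $\delta_A$ forces $m \otimes ax = m \otimes xa$, i.e. $ax = xa$ in $C$ for all $a \in A$, and this invariance is in turn exactly what makes $\delta_M(m) = m \otimes x$ well defined and $A$-linear. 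This yields a bijection between natural transformations $1 \to \_\_\otimes_A C$ and invariant elements $x \in C$, which I expect to be the main step to get right.

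It then remains to read off the two conditions on $\delta$ through this bijection. Evaluating $\epsilon \circ \delta = id$ on $m \in M$ gives $m\,\epsilon_C(x) = m$, whence $\epsilon_C(x) = 1$ on taking $M = A$ and $m = 1$; and evaluating $\delta L \circ \delta = \Delta \circ \delta$ gives $m \otimes x \otimes x = m \otimes \Delta_C(x)$, whence $\Delta_C(x) = x \otimes x$ by the same specialization. Thus $F^{\mathbf{L}}$ is heavily separable precisely when $C$ admits an invariant $x$ with $\epsilon_C(x) = 1$ and $\Delta_C(x) = x \otimes x$, i.e. an invariant grouplike element, which is (2) $\Leftrightarrow$ (3) and completes the proof.
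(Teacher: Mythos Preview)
Your proof is correct. For (1) $\Leftrightarrow$ (2) you argue exactly as the paper does, via Corollary \ref{C3.3}(1). For (2) $\Leftrightarrow$ (3) the paper simply invokes \cite[Theorem 2.8]{Ard1}, whereas you supply a direct argument: you apply Theorem \ref{RTT}(2) to the adjunction $(U^{\mathbf L},F^{\mathbf L})$ with $J=1$, identify natural transformations $\delta:1\to \_\_\otimes_A C$ with invariant elements $x\in C$ by a Yoneda-type evaluation at $A$, and then read off $\epsilon_C(x)=1$ and $\Delta_C(x)=x\otimes x$ from the two conditions on $\delta$. This is essentially a self-contained reproof of the cited result of Ardizzoni--Menini; it makes the note more self-contained at the cost of a few lines, while the paper's citation keeps the proof to two sentences. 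One small wording point: $m\otimes x$ is always a well-defined element of $M\otimes_A C$; what the invariance of $x$ actually buys you is the right $A$-linearity of $\delta_M$ (since $ma\otimes x=m\otimes ax$ must equal $m\otimes xa$), not well-definedness per se.
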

\begin{proof}
(1) $\Leftrightarrow$ (2)  By Corollary \ref{C3.3}, the result is clear.\\
(2) $\Leftrightarrow$ (3)  This is \cite[Theorem 2.8]{Ard1}.
\end{proof} 
\smallskip
Let $R \longrightarrow S$ be a ring map. We recall that  $S \otimes_R S$ is an $S$-coring (known as the Sweedler coring, see for instance, \cite[$\S$ 3]{Ard1}), with comultiplication $S \otimes_R S \longrightarrow (S \otimes_R S) \otimes_S (S \otimes_R S) \cong S \otimes_R S \otimes_R S$  given by $s_1 \otimes s_2 \mapsto s_1 \otimes 1 \otimes s_2$. The counit $S \otimes_R S \longrightarrow S$ is given by $s_1 \otimes s_2 \mapsto s_1s_2$.
\begin{cor}
   The following are equivalent for a ring homomorphism $R \longrightarrow S$ :\\
  (1) The restriction of scalars functor $\mathcal{M}_S \longrightarrow \mathcal{M}_R$ is heavily separable.\\
  (2) The free-$S \otimes_R S$-contramodule functor associated to the Sweedler coring $S \otimes_R S$ is heavily separable.
\end{cor}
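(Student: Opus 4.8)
The plan is to realise the restriction of scalars $\varphi_{*} : \mathcal{M}_S \longrightarrow \mathcal{M}_R$ and the free $S\otimes_R S$-contramodule functor as, respectively, the right adjoint of a comonad-defining adjunction and the left adjoint of the corresponding monad-defining adjunction on the single category $\mathcal{M}_S$, and then to invoke Theorem \ref{T3.2}(3) with $I = J = 1_{\mathcal{M}_S}$. Write $\varphi : R \longrightarrow S$ for the given ring map and let $(\varphi^{*}, \varphi_{*})$ be the extension/restriction adjunction, so that $\varphi^{*} : \mathcal{M}_R \longrightarrow \mathcal{M}_S$ is left adjoint to $\varphi_{*} : \mathcal{M}_S \longrightarrow \mathcal{M}_R$. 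On $\mathcal{C} := \mathcal{M}_S$ we consider the adjoint pair $(L,R)$ with $L := \_\_ \otimes_S (S \otimes_R S) \cong \_\_ \otimes_R S$ and $R := Hom_S(S \otimes_R S, \_\_)$, where $L$ carries the Sweedler comonad $\mathbf{L} = (L, \Delta, \epsilon)$ induced by $\Delta_{S\otimes_R S}$ and $\epsilon_{S\otimes_R S}$, and $R$ the corresponding monad $\mathbf{R}$ via Lemma \ref{L3.1}. By the discussion preceding Theorem \ref{P3.4}, the free $\mathbf{R}$-algebra functor $F_{\mathbf{R}} : \mathcal{M}_S \longrightarrow \mathcal{M}_{S, \mathbf{R}}$ is exactly the free $S\otimes_R S$-contramodule functor appearing in (2), and $(F_{\mathbf{R}}, U_{\mathbf{R}})$ is an adjunction with associated monad $\mathbf{R}$.

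The key point is to identify the comonad determined by the adjunction $(\varphi^{*}, \varphi_{*})$ on $\mathcal{M}_S$ with the Sweedler comonad $\mathbf{L}$. Since $N \otimes_S (S \otimes_R S) \cong N \otimes_R S \cong \varphi^{*}\varphi_{*}(N)$ naturally in $N$, the two underlying functors agree. One then checks that the comultiplication $\varphi^{*}\eta\varphi_{*}$ and the counit $\varepsilon$ of the adjunction correspond, under this isomorphism, to $1 \otimes \Delta_{S\otimes_R S}$, sending $n s_1 \otimes s_2$ to $n s_1 \otimes 1 \otimes s_2$, and to $1 \otimes \epsilon_{S\otimes_R S}$, sending $n s_1 \otimes s_2$ to $n s_1 s_2$, respectively. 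This is the classical fact that the comonad of the extension/restriction adjunction is the Sweedler coring comonad; carrying out the matching of the structure maps is the only genuinely computational ingredient.

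With these identifications in place, Theorem \ref{T3.2}(3) applies directly. Taking $(P', Q) = (\varphi^{*}, \varphi_{*})$ as the adjunction with associated comonad $\mathbf{L}$, taking $(P, Q') = (F_{\mathbf{R}}, U_{\mathbf{R}})$ as the adjunction with associated monad $\mathbf{R}$, and setting $I = J = 1_{\mathcal{M}_S}$, which is full, we conclude that $Q = \varphi_{*}$ is heavily separable if and only if $P = F_{\mathbf{R}}$ is heavily separable. Here we use that heavy $1_{\mathcal{M}_S}$-separability coincides with ordinary heavy separability, as noted after Definition \ref{D2.1}. This is precisely the asserted equivalence of (1) and (2). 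I expect the identification of the two comonad structures to be the main obstacle, after which the appeal to Theorem \ref{T3.2}(3) is immediate.
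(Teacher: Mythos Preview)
Your argument is correct and takes a somewhat different route from the paper's own proof. The paper argues via an intermediate characterisation: it cites \cite{Ard1} for the equivalence of (1) with the existence of an invariant grouplike element in the Sweedler coring $S\otimes_R S$, and then applies Proposition \ref{P3.4} (the equivalence of the free contramodule functor being heavily separable with the existence of such an element) to pass to (2). You instead bypass the grouplike element entirely by identifying the comonad $\varphi^{*}\varphi_{*}$ with the Sweedler comonad $\mathbf L$ and invoking Theorem \ref{T3.2}(3) (equivalently Corollary \ref{C3.3}(2)) directly with $I=J=1_{\mathcal M_S}$. Your route is more self-contained, since it avoids the external reference and stays within the categorical framework of Section 4; the paper's route, by contrast, foregrounds the concrete ring-theoretic criterion and reuses Proposition \ref{P3.4} which was just established. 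The one genuine verification you flag, that the comonad of $(\varphi^{*},\varphi_{*})$ agrees on the nose with the Sweedler comonad (including comultiplication and counit), is standard and your sketch of it is accurate.
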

\begin{proof}
   By \cite[$\S$ 3]{Ard1}, the restriction of scalars functor is heavily separable if and only if the Sweedler coring $S\otimes_RS$ has an invariant grouplike element. By Proposition \ref{P3.4}, the result is clear.
\end{proof}
\section{Functors associated to entwined modules}
Let $k$ be a commutative ring. In this section, all tensor products are taken over $k$. For a $k$-module $M$, we denote by $l_M$ and $r_M$, the canonical isomorphisms $l_M:k \otimes M \xrightarrow{\cong} M$ and $r_M:M \otimes k \xrightarrow{\cong} M$ respectively. We shall use the Sweedler notation for comultiplications and coactions throughout.

\smallskip
We recall (see for instance, \cite[$\S$ 1]{CM}) that a (right-right) entwining structure is a triple $(A, C, \psi)$ where $(A, \nabla_A : A \otimes A \longrightarrow A, i_A : k \longrightarrow A)$ is a $k$-algebra, $(C, \Delta_C : C \longrightarrow C \otimes C, \epsilon_C : C \longrightarrow k)$ is a $k$-coalgebra and $\psi : C \otimes A \longrightarrow A \otimes C, c \otimes a \mapsto a_{\psi} \otimes c^{\psi}$ (summation understood) is a $k$-linear map satisfying the following conditions :
\begin{equation}\label{ent1}
   \psi \circ (1_C \otimes \nabla_A) = (\nabla_A \otimes 1_C) \circ (1_A \otimes \psi) \circ (\psi \otimes 1_A)
\end{equation}
\begin{equation}\label{ent2}
  \psi \circ (1_C \otimes i_A) \circ {r_C}^{-1} = (i_A \otimes 1_C) \circ {l_C}^{-1}
\end{equation}
\begin{equation}\label{ent3}
  (1_A \otimes \Delta_C) \circ \psi = (\psi \otimes 1_C) \circ (1_C \otimes \psi) \circ (\Delta_C \otimes 1_A)
\end{equation}
\begin{equation}\label{ent4}
   r_A \circ (1_A \otimes \epsilon_C) \circ \psi = l_A \circ (\epsilon_C \otimes 1_A)
\end{equation}Further, an $(A, C, \psi)$-entwined module is a triple $(M, \rho_M, \rho^M)$ consisting of a $k$-module $M$, a right $A$-action $\rho_M : M \otimes A \longrightarrow M$ and a right $C$-coaction $\rho^M : M \longrightarrow M \otimes C$ on $M$ such that
\begin{equation}
  \rho^M \circ \rho_M = (\rho_M \otimes 1_C) \circ (1_M \otimes \psi) \circ (\rho^M \otimes 1_A)
\end{equation}
The category of $(A, C, \psi)$-entwined modules with $A$-linear, $C$-colinear maps is denoted by $\mathcal{M}(\psi)^{C}_{A}$. There are canonical adjunctions $(U^C,F^C)$ and
$(F_A,U_A)$ given by (see, for instance, \cite[$\S$ 1]{CM}):
\begin{equation}\label{entwad}
\mathcal M_A\xleftrightarrows[\text{$\qquad F^C\qquad$}]{\text{$\qquad U^C\qquad$}}\mathcal M(\psi)^{C}_A \qquad \mathcal M(\psi)^{C}_A\xleftrightarrows[\text{$\qquad U_A\qquad$}]{\text{$\qquad F_A\qquad$}}\mathcal M^C
\end{equation}
where $U^C$ and $U_A$ are the functors forgetting the $C$-coaction and the $A$-action respectively and
\begin{equation}
\begin{array}{c}
      F^C: \mathcal M_A\longrightarrow  \mathcal M(\psi)^{C}_A \qquad (M, \rho_M) \mapsto (M \otimes C,\text{ }\rho_{M \otimes C} = (\rho_M \otimes 1_C) \circ (1_M \otimes \psi),\text{ }\rho^{M \otimes C} = 1_M \otimes \Delta_C)\\ \\
      F_A: \mathcal M^C\longrightarrow  \mathcal M(\psi)^{C}_A \qquad (M, \rho^M) \mapsto (M \otimes A,\text{ }\rho_{M \otimes A} = 1_M \otimes \nabla_A,\text{ }\rho^{M \otimes A} = (1_M \otimes \psi) \circ (\rho^M \otimes 1_A))\\
      \end{array}
\end{equation}
The unit $\eta$ and counit $\varepsilon$ of the adjunction $(U^C, F^C)$ are given by :
\begin{equation}\label{uco1} \eta_{(M, \rho_M, \rho^M)} = \rho^M\text{ for all }(M, \rho_M, \rho^M) \in \mathcal{M}(\psi)^{C}_A \qquad \varepsilon_{(N, \rho_N)} = r_N \circ (1_N \otimes \epsilon_C)\text{ for all }(N, \rho_N) \in \mathcal{M}_A\end{equation}
The unit $\eta'$ and counit $\varepsilon'$ of the adjunction $(F_A, U_A)$ are given by :
\begin{equation}\label{uco2}\eta'_{(N, \rho^N)} = (1_N \otimes i_A) \circ r^{-1}_N\text{ for all }(N, \rho^N) \in \mathcal{M}^C \qquad \varepsilon'_{(M, \rho_M, \rho^M)} = \rho_M\text{ for all }(M, \rho_M, \rho^M) \in \mathcal{M}(\psi)^{C}_A\end{equation}

\smallskip
We now study equivalent conditions for the heavy $U_A$-separability of the functor $U^C$.
We begin by setting $\Sigma:= {Nat}(U_A F^C U^C F^C U^C, U_A)$. Let $\Omega$ be the set of all $k$-linear maps $T : C \otimes C \otimes C \longrightarrow A$ satisfying the following condition
\begin{equation}\label{D4.1}
\psi\circ (1_C\otimes T)\circ (\Delta_C\otimes 1_C\otimes 1_C)=(T\otimes 1_C)\circ (1_C\otimes 1_C\otimes \Delta_C):C\otimes C\otimes C\longrightarrow A\otimes C
\end{equation}

\begin{lem}\label{L4.2}
  The map $\alpha : \Sigma \longrightarrow \Omega$ given by,
\begin{equation}\label{E4.2}
   \alpha(\Phi) = T := r_A \circ (1_A \otimes \epsilon_C) \circ \Phi_{F^C\left(A, \nabla_A\right)} \circ (i_A \otimes 1_{C \otimes C \otimes C}) \circ l^{-1}_{C \otimes C \otimes C}
\end{equation}
is a bijection with inverse $\beta : \Omega \longrightarrow \Sigma$ given by,
\begin{equation}\label{E4.3}
   \beta(T) = \Phi := \left(\Phi_{(M, \rho_M, \rho^M)} := \rho_M \circ (1_M \otimes T) \circ (\rho^M \otimes 1_{C \otimes C})\right)_{(M, \rho_M, \rho^M) \in \mathcal{M}(\psi)^{C}_{A}}
\end{equation}
\end{lem}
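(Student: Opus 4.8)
The plan is to prove the bijection by establishing four things: that $\beta(T)$ is a well-defined element of $\Sigma$ for $T\in\Omega$, that $\alpha(\Phi)$ lies in $\Omega$ for $\Phi\in\Sigma$, and that $\alpha\circ\beta=\mathrm{id}_\Omega$ and $\beta\circ\alpha=\mathrm{id}_\Sigma$. Throughout I would write $\rho^M(m)=m_{(0)}\otimes m_{(1)}$ and $\psi(c\otimes a)=a_\psi\otimes c^\psi$ in Sweedler notation, and I would record first the bookkeeping: the underlying $k$-linear map of $U_AF^CU^CF^CU^C$ applied to $(M,\rho_M,\rho^M)$ is $M\otimes C\otimes C$ with $C$-coaction $1_{M\otimes C}\otimes\Delta_C$, whereas $U_A(M,\rho_M,\rho^M)$ is $(M,\rho^M)$; a morphism $\theta$ is sent by the big composite to $\theta\otimes 1_C\otimes 1_C$ and by $U_A$ to $\theta$.

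First I would treat the reconstruction $\beta\circ\alpha=\mathrm{id}$, which I regard as the structural heart of the argument. Given $\Phi\in\Sigma$, I would exploit naturality against two families of entwined-module morphisms: the unit $\eta_{(M,\rho_M,\rho^M)}=\rho^M:(M,\rho_M,\rho^M)\to F^CU^C(M,\rho_M,\rho^M)$ from \eqref{uco1}, and the maps $F^C(f_m)=f_m\otimes 1_C$, where $f_m:(A,\nabla_A)\to(M,\rho_M)$, $a\mapsto m\cdot a$, is the right $A$-linear map determined by $m\in M$. Naturality of $\Phi$ against $\rho^M$ gives $\rho^M\circ\Phi_{(M,\ldots)}=\Phi_{F^C(M,\rho_M)}\circ(\rho^M\otimes 1_C\otimes 1_C)$, while naturality against $F^C(f_m)$ expresses $\Phi_{F^C(M,\rho_M)}$ through $\Phi_{F^C(A,\nabla_A)}$. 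Post-composing with $r_M\circ(1_M\otimes\epsilon_C)$ and using the comodule counit law $r_M\circ(1_M\otimes\epsilon_C)\circ\rho^M=1_M$, I expect to obtain exactly $\Phi_{(M,\rho_M,\rho^M)}(m\otimes c\otimes c')=m_{(0)}\cdot T(m_{(1)}\otimes c\otimes c')$ with $T=\alpha(\Phi)$, that is $\Phi=\beta(\alpha(\Phi))$. Notably this step uses only naturality and the comodule axioms of $M$, not the entwining relation.

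Next, to see that $\alpha$ lands in $\Omega$, I would feed the reconstruction formula into the $C$-colinearity of the component $\Phi_{F^C(A,\nabla_A)}$, which holds because each component of $\Phi$ is a morphism in $\mathcal M^C$. Evaluating the colinearity identity on $1_A\otimes d\otimes c\otimes c'$, applying $1_A\otimes\epsilon_C\otimes 1_C$, and simplifying the resulting $\psi$-term by the entwining axiom \eqref{ent4} together with the counit law for $C$, I expect the two sides to collapse precisely to the two sides of \eqref{D4.1}, so that $T\in\Omega$. For the reverse well-definedness I would show that for $T\in\Omega$ each $\beta(T)_{(M,\rho_M,\rho^M)}=\rho_M\circ(1_M\otimes T)\circ(\rho^M\otimes 1_{C\otimes C})$ is $C$-colinear: expanding $\rho^M\circ\rho_M$ by the entwined-module compatibility condition, pushing the coaction through by coassociativity, and then applying \eqref{D4.1} to the factor $m_{(1)}\otimes T(m_{(2)}\otimes c\otimes c')$ should convert the $\psi$-term into comultiplication on the last copy of $C$, matching $(\beta(T)_{(M,\ldots)}\otimes 1_C)\circ(1_M\otimes 1_C\otimes\Delta_C)$. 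Naturality of $\beta(T)$ in the object variable is then formal, following from the $A$-linearity and $C$-colinearity of any entwined-module morphism.

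Finally, $\alpha\circ\beta=\mathrm{id}_\Omega$ is a short direct computation: plugging $\beta(T)$ into \eqref{E4.2}, evaluating at $1_A\otimes d\otimes c\otimes c'$ so that the free-module action reduces $(1_A\otimes d_{(1)})\cdot(-)$ to an application of $\psi$, and again invoking \eqref{ent4} and the counit law, returns $T(d\otimes c\otimes c')$. I expect the main obstacle to be the colinearity bookkeeping: verifying that \eqref{D4.1} is exactly the condition making $\beta(T)$ comodule-valued, and dually extracting \eqref{D4.1} from the colinearity of $\Phi_{F^C(A,\nabla_A)}$. This is where the entwining axiom \eqref{ent4} and the entwined-module compatibility genuinely enter, and where the relative order of the $\psi$, $\Delta_C$ and coaction operations must be tracked with care; everything else is naturality plus the coalgebra counit law.
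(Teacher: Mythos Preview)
Your approach is correct but takes a genuinely different route from the paper's.

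To show $\alpha(\Phi)\in\Omega$, the paper pivots to the component at the \emph{other} free object $F_A(C,\Delta_C)=C\otimes A$: it notes that $\psi:F_A(C)\to F^C(A)$ is a morphism in $\mathcal{M}(\psi)^C_A$, relating $\overline{\Phi}=\Phi_{F_A(C)}$ to $\underline{\Phi}=\Phi_{F^C(A)}$, and then exploits that $C\otimes A$ carries a \emph{left} $C$-coaction via $\Delta_C$.  Left $C$-colinearity of $\overline{\Phi}$ yields the closed formula $\overline{\Phi}(x\otimes 1\otimes y\otimes z)=x_{[1]}\otimes T(x_{[2]}\otimes y\otimes z)$; feeding this into right $C$-colinearity of $\overline{\Phi}$ and using \eqref{ent3}, \eqref{ent4} produces \eqref{D4.1}.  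Only afterwards does the paper check that $\beta$ lands in $\Sigma$ and that $\alpha,\beta$ are mutual inverses.  You, by contrast, stay entirely with $F^C(A)=A\otimes C$ and reverse the logical order: you first obtain the reconstruction formula $\Phi_M=\rho_M\circ(1_M\otimes T)\circ(\rho^M\otimes 1_{C\otimes C})$ for \emph{all} $M$ by a Yoneda-style naturality argument (against $\rho^M$ and the maps $F^C(f_m)$), and only then read off \eqref{D4.1} from the right $C$-colinearity of the single component $\Phi_{A\otimes C}$ after post-composing with $1_A\otimes\epsilon_C\otimes 1_C$.  Your route is more self-contained and avoids invoking the left $C$-colinearity of $\overline{\Phi}$ (a step the paper asserts but does not spell out); the paper's route, on the other hand, makes visible the role of $\psi$ as a morphism between the two free entwined modules and the bicomodule structure of $C\otimes A$.
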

\begin{proof}
  For $\Phi \in \Sigma$, let us show that $T = \alpha(\Phi)$ as defined in \eqref{E4.2} is an element of $\Omega$. We set $\underline{\Phi} := \Phi_{F^C\left(A, \nabla_A\right)}$ and $\overline{\Phi} := \Phi_{F_A\left(C, \Delta_C\right)}$. It follows from \eqref{ent1} and \eqref{ent3} that $\psi : F_A(C, \Delta_C) \longrightarrow F^C(A, \nabla_A)$ is a morphism in $\mathcal{M}(\psi)^{C}_A$. The naturality of $\Phi$ and \eqref{ent4} give the commutative diagram :
\begin{equation}\label{diagr}
\begin{tikzcd}[row sep=huge, column sep=huge]
  C \otimes A \otimes C \otimes C \arrow{r}{\overline{\Phi}} \arrow[swap]{d}{\psi \otimes 1_{C \otimes C}} & C \otimes A \arrow{r}{l_A \circ (\epsilon_C \otimes 1_A)} \arrow[swap]{d}{\psi} & A \arrow{d}{1_A}\\
  A \otimes C \otimes C \otimes C \arrow{r}{\underline{\Phi}} & A \otimes C \arrow{r}{r_A \circ (1_A \otimes \epsilon_C)} & A
\end{tikzcd}
\end{equation}
We set $\overline{\lambda} := l_A \circ (\epsilon_C \otimes 1_A) \circ \overline{\Phi}$ and $\underline{\lambda} := r_A \circ (1_A \otimes \epsilon_C) \circ \underline{\Phi}$. Then, for all $x, y, z \in C$,
\begin{equation}\label{lamb}
  \begin{split}
     T(x \otimes y \otimes z) &= \underline{\lambda}(1 \otimes x \otimes y \otimes z)\\
                                        &=  (r_A \circ (1_A \otimes \epsilon_C) \circ \underline{\Phi})(\psi(x \otimes 1) \otimes y \otimes z)\qquad[\text{using }\eqref{ent2}]\\
                                        &= \overline{\lambda}(x \otimes 1 \otimes y \otimes z)\qquad\quad\qquad\qquad\qquad\qquad[\text{using diagram }\eqref{diagr}]
  \end{split}
\end{equation}
We note that $\Delta_C$ induces natural left $C$-coactions on $U_A F^C U^C F^C U^C (C \otimes A), U_A(C \otimes A) \in \mathcal{M}^C$. Further, $\overline{\Phi} : C \otimes A \otimes C \otimes C \longrightarrow C \otimes A$ is left and right $C$-colinear. The left $C$-colinearity of $\overline{\Phi}$  gives us the commutative diagram
\begin{equation}\label{cd515r}\small
\begin{tikzcd}[row sep = 7em, column sep = 7em]
  C \otimes A \otimes C \otimes C \arrow{r}{\overline{\Phi}} \arrow[swap]{d}{\Delta_C \otimes 1_{A \otimes C \otimes C}} & C \otimes A \arrow{rd}{1_C \otimes 1_A} \arrow[swap]{d}{\Delta_C \otimes 1_A}\\
  C \otimes C \otimes A \otimes C \otimes C \arrow{r}{1_C \otimes \overline{\Phi}} & C \otimes C \otimes A \arrow{r}{(r_C \circ (1_C \otimes \epsilon_C)) \otimes 1_A} & C \otimes A
\end{tikzcd}
\end{equation}
The commutativity of \eqref{cd515r} implies that for all $x, y, z \in C$ 
\begin{equation}\label{Pt1}
  \begin{split}
     \overline{\Phi}(x \otimes 1 \otimes y \otimes z) &= ((r_C \circ (1_C \otimes \epsilon_C)) \otimes 1_A)(1_C \otimes \overline{\Phi})(\Delta_C \otimes 1_{A \otimes C \otimes C})(x \otimes 1 \otimes y \otimes z)\\
    &= (r_C \otimes 1_A)(x_{[1]} \otimes (\epsilon_C \otimes 1_A)(\overline{\Phi}(x_{[2]} \otimes 1 \otimes y \otimes z)))\\
    &= (1_C \otimes l_A)(x_{[1]} \otimes l_{A}^{-1}(\overline{\lambda}(x_{[2]} \otimes 1 \otimes y \otimes z)))\qquad\qquad[\text{using }r_C \otimes 1_A = 1_C \otimes l_A]\\
                                                                          &= x_{[1]} \otimes T(x_{[2]} \otimes y \otimes z)\qquad\qquad\qquad\qquad\qquad\qquad\qquad[\text{using }\eqref{lamb}]
  \end{split}
\end{equation}
Again, the right-$C$-colinearity of $\overline{\Phi}$, \eqref{ent3} and \eqref{ent4} give the commutative diagram

\begin{equation}\small \begin{tikzcd}[row sep = 7em, column sep = 7em]
  C \otimes A \otimes C \otimes C \arrow{r}{\overline{\Phi}} \arrow[swap]{d}{1_{C \otimes A \otimes C} \otimes \Delta_C} & C \otimes A \arrow{rd}{\psi} \arrow[swap]{d}{(1_C \otimes \psi) \circ (\Delta_C \otimes 1_A)}\\
  C \otimes A \otimes C \otimes C \otimes C \arrow{r}{\overline{\Phi} \otimes 1_C} & C \otimes A \otimes C \arrow{r}{(l_A \circ (\epsilon_C \otimes 1_A)) \otimes 1_C} & A \otimes C
\end{tikzcd}
\end{equation} 
Thus, for all $x, y, z \in C$,
\begin{equation}\label{Pt2}
  \begin{split}
   \psi(\overline{\Phi}(x \otimes 1 \otimes y \otimes z)) &= \left(\left(l_A \circ (\epsilon_C \otimes 1_A)\right) \otimes 1_C\right)(\overline{\Phi}(x \otimes 1 \otimes y \otimes z_{[1]}) \otimes z_{[2]})\\
                                                                                &= \overline{\lambda}(x \otimes 1 \otimes y \otimes z_{[1]}) \otimes z_{[2]}\\
                                                                                &= T(x \otimes y \otimes z_{[1]}) \otimes z_{[2]}\qquad\qquad\qquad[\text{using }\eqref{lamb}]
  \end{split}
\end{equation}
Combining \eqref{Pt1} and \eqref{Pt2}, we have for $x, y, z \in C$,
$$\psi(x_{[1]} \otimes T(x_{[2]} \otimes y \otimes z)) = \psi(\overline{\Phi}(x \otimes 1 \otimes y \otimes z)) = T(x \otimes y \otimes z_{[1]}) \otimes z_{[2]}$$ It follows that $T$ satisfies the condition in \eqref{D4.1}, i.e.,  $T \in \Omega$.

\smallskip
Conversely, given $T \in \Omega$, let $\Phi = \beta(T)$ as defined in \eqref{E4.3}. Using the commutativity of diagram \ref{D4.1}, we may check that for each $(M, \rho_M, \rho^M) \in \mathcal{M}(\psi)^{C}_{A}$, $\Phi_{(M, \rho_M, \rho^M)}$ is a morphism in $\mathcal M^C$. Further, for any morphism $f : (M, \rho_M, \rho^M) \longrightarrow (N, \rho_N, \rho^N)$ in $\mathcal{M}(\psi)^{C}_A$, we have for all $m \in M$ and $x, y \in C$,
\begin{equation*}
  \begin{split}
    \Phi_N(f(m) \otimes x \otimes y) &= \rho_N \circ (1_N \otimes T) \circ (\rho^N \otimes 1_{C \otimes C})(f(m) \otimes x \otimes y)\\
                                                    &= \rho_N \circ (1_N \otimes T)(f(m)_{[0]} \otimes f(m)_{[1]} \otimes x \otimes y)\\
                                                    &= \rho_N \circ (1_N \otimes T)(f(m_{[0]}) \otimes m_{[1]} \otimes x \otimes y)\qquad[\text{using the right-}C\text{-colinearity of }f]\\
                                                    &= f(m_{[0]})T(m_{[1]} \otimes x \otimes y)\\
                                                    &= f(m_{[0]}T(m_{[1]} \otimes x \otimes y))\qquad\qquad\qquad\qquad\quad[\text{using the right-}A\text{-linearity of }f]\\
                                                    &= f(\Phi_M(m \otimes x \otimes y))
  \end{split}
\end{equation*}so that $\Phi_N \circ (f \otimes 1_{C \otimes C}) = f \circ \Phi_M$. Thus, $\Phi \in \Sigma= {Nat}(U_A F^C U^C F^C U^C, U_A)$. It may be verified that
$\alpha$ and $\beta$ are inverses. 
\end{proof}

\begin{Thm}\label{T4.3}
The functor  $U^C$ is heavily $U_A$-separable if and only if there exists a $k$-linear map $\theta : C \otimes C \longrightarrow A$ such that for all $x, y, z \in C$
  \begin{equation}\label{E4.4}
    \theta(x \otimes y_{[1]}) \otimes y_{[2]} = \theta(x_{[2]} \otimes y)_{\psi} \otimes x^{\psi}_{[1]}
  \end{equation}
  \begin{equation}\label{E4.5}
    \theta \circ \Delta_C = i_A \circ \epsilon_C
  \end{equation}
  \begin{equation}\label{E4.6}
    \theta(y \otimes z_{[1]})_{\psi}.\theta(x^{\psi} \otimes z_{[2][1]}) \otimes z_{[2][2]} = \epsilon_C(y)\theta(x \otimes z_{[1]}) \otimes z_{[2]}
  \end{equation}
\end{Thm}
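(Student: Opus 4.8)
The plan is to feed the adjunction $(U^C,F^C)$ of \eqref{entwad} into the Rafael-type theorem. Taking the left adjoint $F=U^C$ and $I=U_A$ in Theorem \ref{RTT}(1), the functor $U^C$ is heavily $U_A$-separable if and only if there is a natural transformation $\gamma:U_AF^CU^C\longrightarrow U_A$ with
\[\gamma\circ U_A\eta=id,\qquad \gamma\circ(\gamma F^CU^C)=\gamma\circ(U_AF^C\varepsilon U^C),\]
where $\eta,\varepsilon$ are the unit and counit of $(U^C,F^C)$ recorded in \eqref{uco1}. The whole argument then consists of translating the existence of such a $\gamma$ and its two identities into the three conditions on $\theta:C\otimes C\longrightarrow A$.

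First I would establish a correspondence $\gamma\leftrightarrow\theta$ by an analogue of Lemma \ref{L4.2} one degree lower, setting $\gamma_{(M,\rho_M,\rho^M)}(m\otimes x)=m_{[0]}\,\theta(m_{[1]}\otimes x)$ and recovering $\theta$ by evaluating $\gamma$ on the free object $F^C(A,\nabla_A)$ and collapsing the extra copy of $C$ with $\epsilon_C$, in the manner of $\alpha$ in \eqref{E4.2}. The only real point is that each $\gamma_{(M,\rho_M,\rho^M)}$ must be $C$-colinear; computing $\rho^M\circ\gamma_M$ through the entwined-module axiom $\rho^M\circ\rho_M=(\rho_M\otimes 1_C)\circ(1_M\otimes\psi)\circ(\rho^M\otimes 1_A)$ and comparing it with $(\gamma_M\otimes 1_C)\circ(1_M\otimes\Delta_C)$ shows that colinearity holds for every $M$ precisely when $\theta$ obeys \eqref{E4.4}. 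Naturality in $M$ is then automatic, as in the $\beta$-direction of Lemma \ref{L4.2}.

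Next I would dispose of the unit identity. By \eqref{uco1} we have $(U_A\eta)_M=\rho^M$, so after one use of coassociativity $\gamma_M\circ(U_A\eta)_M(m)=m_{[0]}\,\theta(m_{[1]}\otimes m_{[2]})$, and $\gamma\circ U_A\eta=id$ becomes $m_{[0]}\,\theta(m_{[1]}\otimes m_{[2]})=m$ for all $M$ and $m$. If $\theta\circ\Delta_C=i_A\circ\epsilon_C$ this is immediate from the counit axiom; conversely, evaluating at $m=1_A\otimes x$ in $F^C(A,\nabla_A)$ and applying $r_A\circ(1_A\otimes\epsilon_C)$ together with the entwining axiom \eqref{ent4} forces \eqref{E4.5}. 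This pins the unit identity to \eqref{E4.5}.

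The crux is the compatibility identity, and this is where Lemma \ref{L4.2} is used in earnest. Both $\gamma\circ(\gamma F^CU^C)$ and $\gamma\circ(U_AF^C\varepsilon U^C)$ lie in $\Sigma=\mathrm{Nat}(U_AF^CU^CF^CU^C,U_A)$, so under the bijection $\Sigma\cong\Omega$ of Lemma \ref{L4.2} they coincide iff their representatives $C\otimes C\otimes C\to A$ agree, equivalently iff the associated $A\otimes C$-valued forms (obtained by comultiplying the last slot and rewritten through the $\Omega$-relation \eqref{D4.1}) agree. From $\varepsilon_{(N,\rho_N)}=r_N\circ(1_N\otimes\epsilon_C)$ the right-hand transformation is computed at once, and its form is the right-hand side $\epsilon_C(y)\,\theta(x\otimes z_{[1]})\otimes z_{[2]}$ of \eqref{E4.6}. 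For the left-hand transformation I would expand $\gamma_M\circ\gamma_{F^CU^CM}$, repeatedly using the $A$-action $\rho_{M\otimes C}=(\rho_M\otimes 1_C)\circ(1_M\otimes\psi)$ on $F^CU^CM=M\otimes C$ and the entwined-module axiom to push each $\psi$ past the coactions; after passing to its $\Omega$-representative this produces the left-hand side $\theta(y\otimes z_{[1]})_\psi\,\theta(x^\psi\otimes z_{[2][1]})\otimes z_{[2][2]}$ of \eqref{E4.6}. Matching the two forms yields \eqref{E4.6} and finishes the equivalence. I expect this last computation to be the main obstacle: correctly tracking the interleaved applications of $\psi$, $\Delta_C$ and the module and comodule structures, invoking \eqref{ent1}--\eqref{ent4} as needed, so that the double composite collapses onto exactly \eqref{E4.6}.
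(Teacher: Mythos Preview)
Your proposal is correct and follows essentially the same route as the paper: apply Theorem \ref{RTT} to the adjunction $(U^C,F^C)$ with $I=U_A$, identify natural transformations $\gamma$ with maps $\theta$, and use the bijection of Lemma \ref{L4.2} to reduce the compatibility identity to \eqref{E4.6}. The only cosmetic difference is that the paper outsources the correspondence $\gamma\leftrightarrow\theta$ together with conditions \eqref{E4.4} and \eqref{E4.5} to \cite[Proposition 4.12]{CM}, whereas you propose to rederive it directly; either way the remaining work, computing $(\Phi_1)_{A\otimes C}$ and $(\Phi_2)_{A\otimes C}$ on $1\otimes x\otimes y\otimes z$ and matching them to the two sides of \eqref{E4.6}, is exactly what the paper does.
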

\smallskip
\begin{proof}
  By Theorem \ref{RTT}, $U^C$ is heavily $U_A$-separable if and only if there is a natural transformation $\gamma : U_A F^C U^C \longrightarrow U_A$ such that $\gamma \circ U_A\eta = id$ and $\gamma \circ (\gamma F^C U^C) = \gamma \circ (U_A F^C \varepsilon U^C)$. From the proof of \cite[Proposition 4.12]{CM}, there is a bijection  between the set $\Gamma$ of natural transformations $\gamma : U_A F^C U^C \longrightarrow U_A$ satisfying $\gamma \circ U_A(\eta) = id$ and the set $\Theta$ of $k$-linear maps $\theta : C \otimes C \longrightarrow A$ satisfying conditions \eqref{E4.4} and \eqref{E4.5} given by,
\begin{equation}\label{E4.7}
  \begin{split}
\gamma &\mapsto \theta := r_A \circ (1_A \otimes \epsilon_C) \circ \gamma_{F^C\left(A, \nabla_A\right)} \circ (i_A \otimes 1_{C \otimes C}) \circ l^{-1}_{C \otimes C}\\ \theta &\mapsto \gamma := \left(\gamma_{(M, \rho_M, \rho^M)} = \rho_M \circ (1_M \otimes \theta) \circ (\rho^M \otimes 1_C)\right)_{(M, \rho_M, \rho^M) \in \mathcal{M}(\psi)^{C}_{A}}
  \end{split}
\end{equation}
Thus, we only need to show that $\gamma\in \Gamma$ satisfies $\gamma \circ (\gamma F^C U^C) = \gamma \circ (U_A F^C \varepsilon U^C)$ if and only if the corresponding
 $\theta \in \Theta$ satisfies condition \eqref{E4.6}.

\smallskip
We fix $\gamma\in \Gamma$ and the corresponding element $\theta\in \Theta$. Let $\Phi_1 = \gamma \circ (\gamma F^C U^C)$ and $\Phi_2 = \gamma \circ (U_A F^C \varepsilon U^C)$. Since $\Phi_1, \Phi_2 \in \Sigma$, it follows by Lemma \ref{L4.2} that there are elements $T_1 = \alpha(\Phi_1), T_2 = \alpha(\Phi_2) \in \Omega$ where
\begin{equation}\label{E4.8}
  T_i = r_A \circ (1_A \otimes \epsilon_C) \circ (\Phi_i)_{F^{C}(A, \nabla_A)} \circ (i_A \otimes 1_{C \otimes C \otimes C}) \circ l^{-1}_{C \otimes C \otimes C},\qquad i = 1, 2
\end{equation}
Computing $\Phi_1$ at $F^C(A, \nabla_A) = A \otimes C \in \mathcal{M}(\psi)^{C}_{A}$ and using equation \eqref{E4.7} and condition \eqref{E4.4}, it follows that
\begin{equation*}
  \begin{split}
     &(\Phi_1)_{A \otimes C} = \gamma_{A \otimes C} \circ (\gamma F^C U^C)_{A \otimes C} = \gamma_{A \otimes C} \circ \gamma_{A \otimes C \otimes C}\\
     &= (\nabla_A \otimes 1_C) \circ (1_A \otimes \theta \otimes 1_C) \circ (1_{A \otimes C} \otimes \Delta_C) \circ (\nabla_A \otimes 1_{C \otimes C}) \circ (1_A \otimes \psi \otimes 1_C) \circ (1_{A \otimes C} \otimes \theta \otimes 1_C) \circ (1_{A \otimes C} \otimes 1_C \otimes \Delta_C)
  \end{split}
\end{equation*}
Similarly,
\begin{equation*}
\begin{array}{ll}
    (\Phi_2)_{A \otimes C} &= \gamma_{A \otimes C} \circ (U_A F^C \varepsilon U^C)_{A \otimes C} = \gamma_{A \otimes C} \circ (\varepsilon_{A \otimes C} \otimes 1_C)\\
    &= (\nabla_A \otimes 1_C) \circ (1_A \otimes \theta \otimes 1_C) \circ (1_{A \otimes C} \otimes \Delta_C) \circ (r_{A \otimes C} \otimes 1_C) \circ (1_{A \otimes C} \otimes \epsilon_C \otimes 1_C)\\
    \end{array}
\end{equation*}
A computation shows that for all $x, y, z \in C$,
\begin{equation}\label{CaT}
  \begin{split}
     \left[(\Phi_1)_{A \otimes C} \circ \left(i_A \otimes 1_{C \otimes C \otimes C}\right) \circ l^{-1}_{C \otimes C \otimes C}\right](x \otimes y \otimes z) &= \theta(y \otimes z_{[1]})_{\psi}.\theta(x^{\psi} \otimes z_{[2][1]}) \otimes z_{[2][2]}\\
     \left[(\Phi_2)_{A \otimes C} \circ \left(i_A \otimes 1_{C \otimes C \otimes C}\right) \circ l^{-1}_{C \otimes C \otimes C}\right](x \otimes y \otimes z) &= \epsilon_C(y)\theta(x \otimes z_{[1]}) \otimes z_{[2]}
  \end{split}
\end{equation}
which are the left-hand side and the right-hand side respectively, of equation \eqref{E4.6}.\\ \\
Now, suppose that $\gamma \circ (\gamma F^C U^C) = \gamma \circ (U_A F^C \varepsilon U^C)$. Then in particular, $(\Phi_1)_{A \otimes C} = (\Phi_2)_{A \otimes C}$, so that
\begin{equation*}
   \left[(\Phi_1)_{A \otimes C} \circ \left(i_A \otimes 1_{C \otimes C \otimes C}\right) \circ l^{-1}_{C \otimes C \otimes C}\right] = \left[(\Phi_2)_{A \otimes C} \circ \left(i_A \otimes 1_{C \otimes C \otimes C}\right) \circ l^{-1}_{C \otimes C \otimes C}\right]
\end{equation*}and hence, \eqref{E4.6} holds. Conversely, if $\theta$ satisfies condition \eqref{E4.6}, then by \eqref{CaT},
\begin{equation*}
  \begin{split}
    \left[(\Phi_1)_{A \otimes C} \circ \left(i_A \otimes 1_{C \otimes C \otimes C}\right) \circ l^{-1}_{C \otimes C \otimes C}\right] = \left[(\Phi_2)_{A \otimes C} \circ \left(i_A \otimes 1_{C \otimes C \otimes C}\right) \circ l^{-1}_{C \otimes C \otimes C}\right]
  \end{split}
\end{equation*}
so that $T_1 = T_2$, using \eqref{E4.8}. Thus, $\alpha(\Phi_1) = T_1 = T_2 = \alpha(\Phi_2)$, so that $\Phi_1 = \Phi_2$ since $\alpha$ is injective. Hence,
$\gamma \circ (\gamma F^C U^C) = \Phi_1 = \Phi_2 = \gamma \circ (U_A F^C \varepsilon U^C)$.
\end{proof}

\smallskip
We now come to the heavy $U^C$-separability of $U_A$. For a map $\zeta : C \longrightarrow A \otimes A$, we  write $\zeta(c) = \zeta^1(c) \otimes \zeta^2(c)$ (summation understood) for every $c \in C$. We begin by setting $\Pi := \text{Nat}(U^C, U^C F_A U_A F_A U_A)$. Let $\Lambda$ be the set of all $k$-linear maps $S : C \longrightarrow A \otimes A \otimes A$ satisfying the following condition
\begin{equation}\label{P4.5}
   (\nabla_A \otimes 1_A \otimes 1_A) \circ (1_A \otimes S) \circ \psi = (1_A \otimes 1_A \otimes \nabla_A) \circ (S \otimes 1_A) : C \otimes A \longrightarrow A \otimes A \otimes A
\end{equation}
\begin{lem}\label{L4.6}
   The map $\alpha' : \Pi \longrightarrow \Lambda$ given by
\begin{equation*}
   \alpha'(\Psi) = S := l_{A \otimes A \otimes A} \circ (\epsilon_C \otimes 1_{A \otimes A \otimes A}) \circ \Psi_{F_A(C, \Delta_C)} \circ (1_C \otimes i_A) \circ r^{-1}_{C}
\end{equation*}is a bijection with inverse $\beta' : \Lambda \longrightarrow \Pi$ given by
\begin{equation}
   \beta'(S) = \Psi := \left(\Psi_{(M, \rho_M, \rho^M)} := (\rho_M \otimes 1_{A \otimes A}) \circ (1_M \otimes S) \circ \rho^M\right)_{(M, \rho_M, \rho^M) \in \mathcal{M}(\psi)^{C}_A}
\end{equation}
\end{lem}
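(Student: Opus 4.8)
The plan is to establish this lemma as the exact dual of Lemma \ref{L4.2}, interchanging the algebra $A$ with the coalgebra $C$, and correspondingly $A$-linearity with $C$-colinearity and the multiplication/unit data with the comultiplication/counit data. The structural input is the very same morphism exploited there: $\psi : F_A(C,\Delta_C) \longrightarrow F^C(A,\nabla_A)$ is a morphism in $\mathcal{M}(\psi)^C_A$, as shown in Lemma \ref{L4.2} using \eqref{ent1} and \eqref{ent3}. Although $\alpha'$ is defined through the evaluation $\overline{\Psi} := \Psi_{F_A(C,\Delta_C)}$, I expect the compatibility \eqref{P4.5} to be cleanest to read off from the companion evaluation $\underline{\Psi} := \Psi_{F^C(A,\nabla_A)}$, because $F^C(A,\nabla_A) = A \otimes C$ carries both the ambient right $A$-action of $\mathcal{M}_A$ and a left $A$-action given by $\nabla_A$ on its leading factor (left multiplication by a fixed element of $A$ being a morphism in $\mathcal{M}(\psi)^C_A$).

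For the forward direction I would take $\Psi \in \Pi$ and set $S := \alpha'(\Psi)$. First I would apply the naturality of $\Psi$ to $\psi$ and collapse the superfluous $C$-factor by $\epsilon_C$, using the counit axiom \eqref{ent4} and the unit isomorphisms, to obtain a commutative diagram (the dual of \eqref{diagr}) showing that $S$ may equally be computed from $\underline{\Psi}$; this is the analogue of \eqref{lamb}. I would then combine the two $A$-linearities of $\underline{\Psi}$: its left $A$-linearity produces the analogue of \eqref{Pt1}, while its right $A$-linearity, together with the entwining axioms \eqref{ent1} and \eqref{ent2} (the duals of the \eqref{ent3}, \eqref{ent4} used in Lemma \ref{L4.2}), produces the analogue of \eqref{Pt2}. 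Since both computations express the same composite built from $\psi$, equating them yields exactly \eqref{P4.5}, so $S \in \Lambda$.

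For the converse I would take $S \in \Lambda$, set $\Psi := \beta'(S)$, and first check that each component $\Psi_{(M,\rho_M,\rho^M)} = (\rho_M \otimes 1_{A\otimes A}) \circ (1_M \otimes S) \circ \rho^M$ is right $A$-linear, i.e. a morphism in $\mathcal{M}_A$; this is the step in which \eqref{P4.5} is used, in tandem with the entwined-module compatibility relating $\rho_M$, $\rho^M$ and $\psi$. Naturality of $\Psi$ in the entwined module then follows from the $A$-linearity and $C$-colinearity of an arbitrary morphism $f$ in $\mathcal{M}(\psi)^C_A$, exactly as in the second half of Lemma \ref{L4.2}. That $\alpha'$ and $\beta'$ are mutually inverse is then a direct computation with $\epsilon_C$, the unit $i_A$, and the canonical isomorphisms $l$ and $r_C$, parallel to the closing lines of that lemma.

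The main obstacle will be the forward direction, namely reconstructing \eqref{P4.5} from the naturality of $\Psi$. This mirrors the most delicate step of Lemma \ref{L4.2} (the passage through \eqref{Pt1} and \eqref{Pt2}), and the difficulty lies in tracking how the two $A$-actions on $\underline{\Psi}$ interact with $\psi$ through the entwining axioms; once this identity is secured, the remaining verifications are bookkeeping of the same flavour already performed for $\alpha$ and $\beta$.
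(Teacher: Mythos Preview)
Your proposal is correct and takes essentially the same approach as the paper: the paper's proof consists of the single sentence ``The proof is similar to that of Lemma~\ref{L4.2},'' and your plan is precisely a careful unwinding of that dualization, with the roles of $\overline{\Phi}$/$\underline{\Phi}$ and the $C$-colinearities replaced by $\overline{\Psi}$/$\underline{\Psi}$ and the two $A$-linearities on $F^C(A,\nabla_A)=A\otimes C$.
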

\begin{proof}
  The proof is similar to that of Lemma \ref{L4.2}.
\end{proof}
\smallskip
\begin{Thm}\label{T4.7}
  The functor $U_A$ is heavily $U^C$-separable if and only if there exists a $k$-linear map $\zeta : C \longrightarrow A \otimes A$ such that for all $c \in C, a \in A$
  \begin{equation}\label{E4.9}
    \zeta^1(c) \otimes \zeta^2(c)a = a_{\psi}\zeta^1(c^{\psi}) \otimes \zeta^2(c^{\psi})
  \end{equation}
  \begin{equation}\label{E4.10}
    \nabla_A \circ \zeta = i_A \circ \epsilon_C
  \end{equation}
  \begin{equation}\label{E4.11}
             \epsilon_C\left(c_{[1][1]}\right) \zeta^1\left(c_{[2]}\right)_{\psi} \otimes  \zeta^1\left((c_{[1][2]})^{\psi}\right) \otimes  \zeta^2\left((c_{[1][2]})^{\psi}\right). \zeta^2\left(c_{[2]}\right) =  \epsilon_C\left(c_{[1]}\right)\zeta^1\left(c_{[2]}\right) \otimes 1 \otimes  \zeta^2\left(c_{[2]}\right)
  \end{equation}
\end{Thm}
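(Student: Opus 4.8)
The plan is to dualize the proof of Theorem \ref{T4.3}, replacing the adjunction $(U^C, F^C)$ and the left-adjoint part of the Rafael theorem by the adjunction $(F_A, U_A)$ and part (2) of Theorem \ref{RTT} for right adjoints. Since $U_A$ is the right adjoint in $(F_A, U_A)$, with unit $\eta'$ and counit $\varepsilon'$ as in \eqref{uco2}, Theorem \ref{RTT}(2) says that $U_A$ is heavily $U^C$-separable if and only if there is a natural transformation $\delta : U^C \longrightarrow U^C F_A U_A$ such that $U^C\varepsilon' \circ \delta = id$ and
\begin{equation*}
(\delta F_A U_A) \circ \delta = (U^C F_A \eta' U_A) \circ \delta.
\end{equation*}

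First I would set up the bijection between natural transformations $\delta$ satisfying only the counit condition $U^C\varepsilon' \circ \delta = id$ and $k$-linear maps $\zeta : C \longrightarrow A \otimes A$ satisfying \eqref{E4.9} and \eqref{E4.10}. To $\zeta$ one associates $\delta_{(M, \rho_M, \rho^M)}(m) = m_{[0]}\zeta^1(m_{[1]}) \otimes \zeta^2(m_{[1]})$, and conversely one recovers $\zeta$ by evaluating $\delta$ at the free object $F_A(C, \Delta_C)$ via $\zeta = l_{A \otimes A} \circ (\epsilon_C \otimes 1_{A \otimes A}) \circ \delta_{F_A(C, \Delta_C)} \circ (1_C \otimes i_A) \circ r^{-1}_C$. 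Here \eqref{E4.9} is exactly the identity needed to make each $\delta_{(M,\rho_M,\rho^M)}$ right $A$-linear: using the entwined-module compatibility one rewrites $\rho^M(ma)$ as $m_{[0]}a_{\psi}\otimes m_{[1]}^{\psi}$, and \eqref{E4.9} then matches $\delta_M(ma)$ with $\delta_M(m)\cdot a$. Condition \eqref{E4.10}, together with the counit axiom of the coaction, yields $U^C\varepsilon' \circ \delta = id$. This is the formal dual of the correspondence drawn from \cite[Proposition 4.12]{CM}, so I would either invoke that reference dually or verify the two assignments are mutually inverse directly.

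With this correspondence fixed, it remains to show that $\delta$ (equivalently its $\zeta$) satisfies the composition condition above if and only if $\zeta$ satisfies \eqref{E4.11}. The mechanism is Lemma \ref{L4.6}: both $\Psi_1 := (\delta F_A U_A)\circ\delta$ and $\Psi_2 := (U^C F_A\eta' U_A)\circ\delta$ lie in $\Pi = \text{Nat}(U^C, U^C F_A U_A F_A U_A)$, and $\alpha'$ is injective, so $\Psi_1 = \Psi_2$ if and only if $\alpha'(\Psi_1) = \alpha'(\Psi_2)$ in $\Lambda$. I would therefore compute $(\Psi_1)_{F_A(C,\Delta_C)}$ and $(\Psi_2)_{F_A(C,\Delta_C)}$ at $C \otimes A$, substitute the explicit formula for $\delta$ and the coaction $\rho^{C \otimes A} = (1_C \otimes \psi)(\Delta_C \otimes 1_A)$, and then apply $\alpha'$, namely precompose with $(1_C \otimes i_A)\circ r^{-1}_C$ and postcompose with $l_{A\otimes A\otimes A}\circ(\epsilon_C \otimes 1_{A\otimes A\otimes A})$. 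The term $\Psi_1$, which threads two copies of $\delta$, should return the left-hand side of \eqref{E4.11}, while $\Psi_2$, whose $\eta'_{(C,\Delta_C)} = (1_C \otimes i_A)\circ r^{-1}_C$ inserts the slot $1$, should return the right-hand side. Injectivity of $\alpha'$ then closes both directions.

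The hard part will be the bookkeeping in this last computation: one must move the entwining map $\psi$ past two applications of $\delta$ (each inserting a $\zeta$ and multiplying against part of $A$) and collapse correctly the counits $\epsilon_C$ coming both from $\alpha'$ and from the unit $\eta'$. Coassociativity of $\Delta_C$, the entwining axioms \eqref{ent1}--\eqref{ent4}, and condition \eqref{E4.9} will be needed to bring all occurrences of $\psi$ and $\nabla_A$ into the shape of \eqref{E4.11}. I would organize this exactly as the derivation of \eqref{CaT} in the proof of Theorem \ref{T4.3}, of which it is the dual.
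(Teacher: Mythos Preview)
Your proposal is correct and follows essentially the same route as the paper's own proof: invoke Theorem \ref{RTT}(2) for the right adjoint $U_A$, use the bijection between $\delta$'s satisfying $U^C\varepsilon'\circ\delta=id$ and $\zeta$'s satisfying \eqref{E4.9}--\eqref{E4.10} (the paper cites \cite[Proposition 4.13]{CM} rather than dualising 4.12, but the content is identical and your explicit formulas for the bijection match the paper's \eqref{E4.12}), then set $\Psi_1,\Psi_2\in\Pi$, evaluate both at $F_A(C,\Delta_C)=C\otimes A$, apply $\alpha'$ from Lemma \ref{L4.6}, and identify the resulting $S_1,S_2$ with the two sides of \eqref{E4.11}, concluding by the injectivity of $\alpha'$.
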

\begin{proof}
By Theorem \ref{RTT}, $U_A$ is heavily $U^C$-separable if and only if there is a natural transformation $\delta : U^C \longrightarrow U^C F_A U_A$ such that $U^C\varepsilon' \circ \delta = id$ and $(\delta F_A U_A) \circ \delta = (U^C F_A \eta' U_A) \circ \delta$. From the proof of \cite[Proposition 4.13]{CM}, there is a bijection between the set $\Xi_1$ of natural transformations $\delta : U^C \longrightarrow U^C F_A U_A$ such that $U^C\varepsilon' \circ \delta = id$ and the set $\Xi_2$ of $k$-linear maps $\zeta : C \longrightarrow A \otimes A$ satisfying conditions \eqref{E4.9} and \eqref{E4.10} given by,
\begin{equation}\label{E4.12}
  \begin{split}
      \delta &\mapsto \zeta := l_{A \otimes A} \circ (\epsilon_C \otimes 1_{A \otimes A}) \circ \delta_{C \otimes A} \circ (1_C \otimes i_A) \circ r^{-1}_{C}\\
      \zeta &\mapsto \delta := \left(\delta_{(M, \rho_M, \rho^M)} = (\rho_M \otimes 1_A) \circ (1_M \otimes \zeta) \circ \rho^M\right)_{(M, \rho_M, \rho^M) \in \mathcal{M}(\psi)^{C}_{A}}
  \end{split}
\end{equation}
Thus, it suffices to show that $\delta \in \Xi_1$ satisfies $(\delta F_A U_A) \circ \delta = (U^C F_A \eta' U_A) \circ \delta$ if and only if the corresponding $\zeta \in \Xi_2$ satisfies condition \eqref{E4.11}.

\smallskip
We fix $\delta \in \Xi_1$ and the corresponding $\zeta\in \Xi_2$.
Let $\Psi_1 = (\delta F_A U_A) \circ \delta$ and $\Psi_2 = (U^C F_A \eta' U_A) \circ \delta$. Since $\Psi_1, \Psi_2 \in \Pi$, it follows by Lemma \ref{L4.6} that there are elements $S_1 = \alpha'(\Psi_1), S_2 = \alpha'(\Psi_2) \in \Lambda$ where
\begin{equation}\label{E4.13}
   S_i =  l_{A \otimes A \otimes A} \circ (\epsilon_C \otimes 1_{A \otimes A \otimes A}) \circ (\Psi_i)_{F_A(C, \Delta_C)} \circ (1_C \otimes i_A) \circ r^{-1}_{C},\qquad i = 1, 2
\end{equation}
Computing $\Psi_1$ at $F_A(C) = C \otimes A \in \mathcal M(\psi)^{C}_{A}$,  using  \eqref{E4.12} and condition \eqref{E4.9}, it follows that
\begin{equation}\label{Ps1}
  \begin{split}
     &(\Psi_1)_{C \otimes A} = (\delta F_A U_A)_{C \otimes A} \circ \delta_{C \otimes A} = \delta_{C \otimes A \otimes A} \circ \delta_{C \otimes A}\\
     &= (1_{C \otimes A} \otimes 1_A \otimes \nabla_A) \circ (1_{C \otimes A} \otimes \zeta \otimes 1_A) \circ (1_C \otimes \psi \otimes 1_A) \circ (\Delta_C \otimes 1_{A \otimes A}) \circ (1_{C \otimes A} \otimes \nabla_A) \circ (1_C \otimes \zeta \otimes 1_A) \circ (\Delta_C \otimes 1_A)
  \end{split}
\end{equation}
Similarly,
\begin{equation}\label{Ps2}
  \begin{split}
    &(\Psi_2)_{C \otimes A} = (U^C F_A \eta' U_A)_{C \otimes A} \circ \delta_{C \otimes A} = (\eta'_{C \otimes A} \otimes 1_A) \circ \delta_{C \otimes A}\\
    &= (1_{C \otimes A} \otimes i_A \otimes 1_A) \circ (r^{-1}_{C \otimes A} \otimes 1_A) \circ (1_{C \otimes A} \otimes \nabla_A) \circ (1_C \otimes \zeta \otimes 1_A) \circ (\Delta_C \otimes 1_A)
  \end{split}
\end{equation}
A computation using \eqref{E4.13}, \eqref{Ps1}, \eqref{Ps2} shows that for all $c \in C$,
\begin{equation*}
  \begin{split}
      S_1(c) &= \epsilon_C\left(c_{[1][1]}\right)\zeta^1\left(c_{[2]}\right)_{\psi} \otimes \zeta^1\left((c_{[1][2]})^{\psi}\right) \otimes \zeta^2\left((c_{[1][2]})^{\psi}\right).\zeta^2\left(c_{[2]}\right)\\
      S_2(c) &= \epsilon_C\left(c_{[1]}\right)\zeta^1\left(c_{[2]}\right) \otimes 1 \otimes \zeta^2\left(c_{[2]}\right)
  \end{split}
\end{equation*}
which are the left-hand side and the right-hand side respectively, of equation \eqref{E4.11}.\\ \\
Now, suppose that $(\delta F_A U_A) \circ \delta = (U^C F_A \eta' U_A) \circ \delta$. Then in particular, $(\Psi_1)_{C \otimes A} = (\Psi_2)_{C \otimes A}$. Using \eqref{E4.13}, it follows that $S_1 = S_2$. Hence, \eqref{E4.11} holds. Conversely, if $\zeta$ satisfies condition \eqref{E4.11}, then $S_1 = S_2$. Thus, $\alpha'(\Psi_1) = S_1 = S_2 = \alpha'(\Psi_2)$, so that $\Psi_1 = \Psi_2$ since $\alpha'$ is injective. Hence, $(\delta F_A U_A) \circ \delta = \Psi_1 = \Psi_2 = (U^C F_A \eta' U_A) \circ \delta$.
\end{proof}

\begin{bibdiv}
	\begin{biblist}

\bib{Ard0}{article}{
   author={Ardizzoni, A.},
   author={Menini, C.},
   title={Milnor-Moore categories and monadic decomposition},
   journal={J. Algebra},
   volume={448},
   date={2016},
   pages={488--563},
}

   \bib{Ard1}{article}{
   author={Ardizzoni, A.},
   author={Menini, C.},
   title={Heavily separable functors},
   journal={J. Algebra},
   volume={543},
   date={2020},
   pages={170--197},
} 

\bib{Ard2}{article}{
   author={Ardizzoni, A.},
   author={Menini, C.},
   title={Monadic vs adjoint decomposition},
   journal={J. Pure Appl. Algebra},
   volume={226},
   date={2022},
   number={8},
   pages={Paper No. 106945, 47},
}

\bib{BBR}{article}{
   author={Balodi, M.},
   author={Banerjee, A.},
   author={Ray, S.},
   title={Entwined modules over linear categories and Galois extensions},
   journal={Israel J. Math.},
   volume={241},
   date={2021},
   number={2},
   pages={623--692},
}

\bib{Art}{article}{
   author={Banerjee, A.},
   title={Entwined modules over representations of categories},
   journal={Algebras \& Representation Theory},
   pages={(to appear) doi: 10.1007/s10468-023-10203-3},
}

\bib{BBW}{article}{
   author={B\"{o}hm, G.},
   author={Brzezi\'{n}ski, T.},
   author={Wisbauer, R.},
   title={Monads and comonads on module categories},
   journal={J. Algebra},
   volume={322},
   date={2009},
   number={5},
   pages={1719--1747},
}

\bib{Brz0}{article}{
   author={Brzezi\'{n}ski, T.},
   author={Majid, S.},
   title={Coalgebra bundles},
   journal={Comm. Math. Phys.},
   volume={191},
   date={1998},
   number={2},
   pages={467--492},
}

\bib{Brz1}{article}{
   author={Brzezi\'{n}ski, T.},
   title={On modules associated to coalgebra Galois extensions},
   journal={J. Algebra},
   volume={215},
   date={1999},
   number={1},
   pages={290--317},
}

\bib{Brz2}{article}{
   author={Brzezi\'{n}ski, T.},
   title={Frobenius properties and Maschke-type theorems for entwined
   modules},
   journal={Proc. Amer. Math. Soc.},
   volume={128},
   date={2000},
   number={8},
   pages={2261--2270},
}

\bib{uni}{article}{
   author={Brzezi\'{n}ski, T.},
   author={Caenepeel, S.},
   author={Militaru, G.},
   author={Zhu, S.},
   title={Frobenius and Maschke type theorems for Doi-Hopf modules and
   entwined modules revisited: a unified approach},
   book={
      series={Lecture Notes in Pure and Appl. Math.},
      volume={221},
      publisher={Dekker, New York},
   },
   date={2001},
   pages={1--31},
}

\bib{Brz3}{article}{
   author={Brzezi\'{n}ski, T.},
   title={The structure of corings: induction functors, Maschke-type
   theorem, and Frobenius and Galois-type properties},
   journal={Algebr. Represent. Theory},
   volume={5},
   date={2002},
   number={4},
   pages={389--410},
}

\bib{Wis2003}{book}{
   author={Brzezinski, T.},
   author={Wisbauer, R.},
   title={Corings and comodules},
   series={London Mathematical Society Lecture Note Series},
   volume={309},
   publisher={Cambridge University Press, Cambridge},
   date={2003},
}

\bib{Can1}{article}{
   author={Bulacu, D.},
   author={Caenepeel, S.},
   author={Torrecillas, B.},
   title={Frobenius and separable functors for the category of entwined
   modules over cowreaths, I: General theory},
   journal={Algebr. Represent. Theory},
   volume={23},
   date={2020},
   number={3},
   pages={1119--1157},
}

\bib{Can2}{article}{
   author={Bulacu, D.},
   author={Caenepeel, S.},
   author={Torrecillas, B.},
   title={Frobenius and separable functors for the category of entwined
   modules over cowreaths, II: applications},
   journal={J. Algebra},
   volume={515},
   date={2018},
   pages={236--277},
}

\bib{Cae3}{article}{
   author={Caenepeel, S.},
   author={Militaru, G.},
   author={Zhu, S.},
   title={Doi-Hopf modules, Yetter-Drinfel\cprime d modules and Frobenius
   type properties},
   journal={Trans. Amer. Math. Soc.},
   volume={349},
   date={1997},
   number={11},
   pages={4311--4342},
}

\bib{Cae4}{article}{
   author={Caenepeel, S.},
   author={Militaru, G.},
   author={Zhu, S.},
   title={A Maschke type theorem for Doi-Hopf modules and applications},
   journal={J. Algebra},
   volume={187},
   date={1997},
   number={2},
   pages={388--412},
}

\bib{Cae5}{article}{
   author={Caenepeel, S.},
   author={Militaru, G.},
   author={Ion, B.},
   author={Zhu, S.},
   title={Separable functors for the category of Doi-Hopf modules,
   applications},
   journal={Adv. Math.},
   volume={145},
   date={1999},
   number={2},
   pages={239--290},
}

\bib{CanDe}{article}{
   author={Caenepeel, S.},
   author={De Groot, E.},
   title={Modules over weak entwining structures},
   book={
      series={Contemp. Math.},
      volume={267},
      publisher={Amer. Math. Soc., Providence, RI},
   },
   date={2000},
   pages={31--54},

}

  \bib{CM}{article}{
   author={Caenepeel, S.},
   author={Militaru, G.},
   title={Maschke functors, semisimple functors and separable functors of
   the second kind: applications},
   journal={J. Pure Appl. Algebra},
   volume={178},
   date={2003},
   number={2},
   pages={131--157},
}

\bib{Can}{article}{
   author={Caenepeel, S.},
   title={Galois corings from the descent theory point of view},
   conference={
      title={Galois theory, Hopf algebras, and semiabelian categories},
   },
   book={
      series={Fields Inst. Commun.},
      volume={43},
      publisher={Amer. Math. Soc., Providence, RI},
   },
   date={2004},
   pages={163--186},

}

\bib{Chen}{article}{
   author={Chen, X.-W},
   title={A note on separable functors and monads with an application to
   equivariant derived categories},
   journal={Abh. Math. Semin. Univ. Hambg.},
   volume={85},
   date={2015},
   number={1},
   pages={43--52},
}

\bib{EMo}{article}{
   author={Eilenberg, S.},
   author={Moore, J.~ C.},
   title={Adjoint functors and triples},
   journal={Illinois J. Math.},
   volume={9},
   date={1965},
   pages={381--398},
}

\bib{EV}{article}{
   author={Estrada, S.},
   author={Virili, S.},
   title={Cartesian modules over representations of small categories},
   journal={Adv. Math.},
   volume={310},
   date={2017},
   pages={557--609},
}

\bib{KeSt}{article}{
   author={Kelly, G. M.},
   author={Street, R.},
   title={Review of the elements of $2$-categories},
   conference={
      title={Category Seminar},
      address={Proc. Sem., Sydney},
      date={1972/1973},
   },
   book={
      series={Lecture Notes in Math., Vol. 420},
      publisher={Springer, Berlin},
   },
   date={1974},
   pages={75--103},
}  

  \bib{MT}{article}{
   author={Menini, C.},
   author={Torrecillas, B.},
   title={Heavily separable cowreaths},
   journal={J. Algebra},
   volume={583},
   date={2021},
   pages={153--186},
}

  \bib{Mes}{article}{
   author={Mesablishvili, B.},
   title={Monads of effective descent type and comonadicity},
   journal={Theory Appl. Categ.},
   volume={16},
   date={2006},
   pages={No. 1, 1--45},
}

\bib{Mit}{article}{
   author={Mitchell, B.},
   title={Rings with several objects},
   journal={Advances in Math.},
   volume={8},
   date={1972},
   pages={1--161},
}

  \bib{NVV}{article}{
   author={N\u{a}st\u{a}sescu, C.},
   author={Van den Bergh, M.},
   author={Van Oystaeyen, F.},
   title={Separable functors applied to graded rings},
   journal={J. Algebra},
   volume={123},
   date={1989},
   number={2},
   pages={397--413},
}

\bib{Raf}{article}{
   author={Rafael, M. D.},
   title={Separable functors revisited},
   journal={Comm. Algebra},
   volume={18},
   date={1990},
   number={5},
   pages={1445--1459},

}

\bib{Sten}{book}{
   author={Stenstr\"{o}m, B.},
   title={Rings of quotients},
   series={Die Grundlehren der mathematischen Wissenschaften, Band 217},
   note={An introduction to methods of ring theory},
   publisher={Springer-Verlag, New York-Heidelberg},
   date={1975},
}

\end{biblist}
\end{bibdiv}

\end{document}